\documentclass[12pt]{article}

\usepackage{amsmath,amssymb,amsthm,enumerate,graphics,epsfig,cite}%,backref
\usepackage[cp1251]{inputenc}
\usepackage[english]{babel}
\usepackage[unicode]{hyperref}

\newtheorem{theorem}{Theorem}
\newtheorem{proposition}{Proposition}
\newtheorem{corollary}{Corollary}
\newtheorem{lemma}{Lemma}

\newtheorem{remark}{Remark}

\usepackage{cite}
\usepackage[usenames]{color}
\usepackage{colortbl}
\usepackage[normalem]{ulem}

\usepackage{graphicx}
\usepackage{ifpdf}
\ifpdf
\graphicspath{{pictures/}}
  \DeclareGraphicsExtensions{.pdf,.png,.jpg}
\else
  \DeclareGraphicsExtensions{.eps}
\fi

\usepackage{caption}
\DeclareCaptionLabelSeparator{dot}{. }
\makeatletter
\renewcommand{\@seccntformat}[1]{\csname the#1\endcsname.\;}
\makeatother

\makeatletter
\def\@biblabel#1{#1. }
\makeatother

\newcommand{\AbstractE}[1]{\vspace{6mm}\par\noindent\hspace*{10mm}
\parbox{140mm}{\small { } #1} \par}

\usepackage{color}
\usepackage{listings}

\definecolor{codegreen}{rgb}{0,0.6,0}
\definecolor{codegray}{rgb}{0.5,0.5,0.5}
\definecolor{codepurple}{rgb}{0.58,0,0.82}
\definecolor{backcolour}{rgb}{0.95,0.95,0.92}
\definecolor{darkgreen}{rgb}{0.0, 0.5, 0.0}

\lstdefinestyle{mystyle}{
    backgroundcolor=\color{backcolour},
    commentstyle=\color{codegreen},
    keywordstyle=\color{magenta},
    numberstyle=\tiny\color{codegray},
    stringstyle=\color{codepurple},
    basicstyle=\footnotesize\ttfamily,
    breakatwhitespace=false,
    breaklines=true,
    captionpos=b,
    keepspaces=true,
    numbers=left,
    numbersep=5pt,
    showspaces=false,
    showstringspaces=false,
    showtabs=false,
    tabsize=2,
    language=Python
}

\usepackage{algorithm}
\usepackage{algpseudocode}

\begin{document}
\renewcommand\refname{\large References}
\textbf{Oksana Satur} (Institute of Mathematics of the National Academy of Sciences of Ukraine, 3 Tereshchenkivska Str., 01024 Kyiv, Ukraine)

\begin{center}
\Large\bf
Mean--field dynamics of attractive resource interaction: From uniform to aggregated states
\end{center}

\AbstractE{We introduce and study a nonlinear discrete dynamical system describing the evolution of a resource distribution among interacting agents.
The model generalizes several classical mean--field and opinion--dynamics frameworks and is defined on the standard simplex, where each coordinate evolves according to an interaction rule depending on preference--based mean--field interactions.
We provide a complete analytical description of the long-term behavior of the system.
First, we establish monotonicity properties and show that the dynamics always remains in a positively invariant region determined by initial conditions. We prove the existence of a unique fixed point for any admissible parameter set and derive an explicit closed-form formula for the equilibrium in arbitrary dimension.
We then analyze the local stability of the fixed point and identify parameter regimes leading to aggregation or uniform distributions. Finally, we characterize all possible asymptotic scenarios and show that, despite the nonlinear structure, the system does not exhibit oscillatory or chaotic behavior: every trajectory converges to the unique equilibrium.
The results provide a full qualitative theory for this class of monotone resource--interaction models and offer a mathematical explanation for the transition from uniform to aggregated states.
\bigskip

\textbf{Keywords:} discrete dynamical systems, attractive interaction, stability analysis, transcritical bifurcation, resource distribution, mean--field models.

\bigskip

\textbf{Mathematics Subject Classification:} 37N25, 39A30, 91D30.
}

\bigskip

\section{Introduction}
\label{section34}
{\hspace{0.5cm}}
Modeling systems composed of many interacting components that compete for a limited resource is a fundamental task in numerous scientific fields \cite{Sayama2015}. These models are central to sociophysics, where methods from statistical physics are applied to understand how macroscopic social patterns, such as consensus or fragmentation, emerge from microscopic interactions \cite{Starnini2025, 18}.

Such models are applied in the description of social and economic systems, in network theory, and in the analysis of market share or political influence distribution. A key question in the study of such systems is predicting their long-term behavior: will the system converge to a state of equilibrium where the resource is distributed uniformly among all participants, or will it, on the contrary, lead to an aggregated state where the resource is concentrated in only a few components?

The type of interaction between components~-- repulsive or attractive~-- critically affects the system's dynamics. Models with repulsive interaction, which often describe competition, typically lead to the system's tendency to avoid resource concentration. In contrast, models with attractive interaction, which are the focus of this paper, can describe processes of cooperation, gravitation towards centers of influence, or capital concentration. It is such systems that are the subject of our investigation. While the term ``conflict'' might seem counterintuitive in a model with attractive interaction, it represents a competition for dominance or influence within the state space~$\Omega$. In this context, attraction towards more influential components is a key strategic element of the conflict, leading to the aggregation of resources rather than their uniform dispersion.
This connects to the foundational concept of ``conflict'' between probability measures introduced by Volodymyr Koshmanenko \cite{KoTC1, KoTC}. While the original work focused on quantifying divergence, the attractive dynamics explored here demonstrate how such interactions can lead to ``conflict resolution'' through convergence towards a single, stable equilibrium. The mathematical modeling of interaction between conflicting systems, especially concerning the role of external support, is a subject of ongoing research \cite{Karataieva2024}.

This work can be framed within the rich tradition of resource competition models, prevalent in mathematical ecology~\cite{SchF68} and economics~\cite{Levin1970}. Furthermore, studies of competition for substitutable resources, where bifurcation analysis reveals transitions between exclusion and coexistence, provide a valuable context for the bifurcation analysis we undertake in Section \ref{subsection342} \cite{DellalBL2022}.

An alternative and powerful interpretation is as a system of interacting agents striving to reach consensus. In this view, the state vector represents a distribution of opinions, making the model an example of a discrete--time multi--agent system \cite{6, 8}. Aggregated states then represent opinion polarization or fragmentation, phenomena widely studied in sociophysics \cite{10, Sobkowicz2016, Starnini2025}.

Finally, the model shares properties with econophysics models of wealth distribution. The conservation law $\sum_i p_i^t = 1$ mirrors a closed economy. While stochastic agent-based models like that of Chakraborti \& Chakrabarti (2000) lead to Boltzmann--Gibbs distributions \cite{Chakraborti2000}, our deterministic model provides an alternative mechanism for wealth aggregation, where the parameters $c_i$ can be interpreted as the ``economic attractiveness'' of an agent, analytically determining the final distribution.

\looseness=1 Next, we construct a model of a dynamical system that describes the uniform or aggregated distribution of a certain resource in a given state space $\Omega= \{\omega_1, \omega_2, \ldots, \omega_n \}$, $n>1$.
We fix a certain discrete probability measure $\mu$, which corresponds to a specific distribution. The value of this measure,
$$
\mu(\omega_i) =: p_i \geq 0, \quad i = 1, \ldots, n,
$$
is interpreted as the share of the resource or the relative weight of the i-th component of the system.
By construction, $\sum_{i} p_{i} = 1$. Therefore, the sequence $\mu(\omega_i)$ defines a stochastic vector in
${\mathbb R}_+^{n}$: ${\bf p}=(p_1, p_2, \ldots, p_n)$.

The competition for resources concentrated in each of the states $\omega_i$ is described in terms of the measure~$\nu$, whose values at the points~$\omega_i$  are defined as follows:
 $$\nu(\omega_i) = \frac{1-\mu(\omega_i)}{n-1}=\frac{1-p_i}{n-1}=: r_i.$$
It is easy to verify that these values also form a stochastic vector ${\bf r}=(r_1, r_2, \ldots, r_n) \in {\mathbb R_+^{n}}$.
The quantities $\nu(\omega_i)$ can be interpreted as an average distribution (mean field) of $\mu$ over the subset $\Omega \setminus \omega_i$.
This mean--field approach is widely used in statistical physics to describe systems where individual components interact with an average property of the entire system rather than with each other individually \cite{CastellanoFL2009}.
Furthermore, this paradigm has been substantially broadened by the theory of mean--field games, which provides a rigorous framework for analyzing strategic interactions within large populations of agents. Modern applications of this theory are highly diverse, ranging from ranking games with diffusion control \cite{Ankirchner2024} to spatial optimization on bounded domains \cite{Alharbi2026}. While our model does not explicitly formulate a stochastic game, it shares the core mean--field philosophy, describing how the aggregate state of the system shapes the favorable conditions for individual components.

Conceptually, the interdependence of components through the global normalization constraint ($\sum_i p_i = 1$) and the shared mean--field distribution establishes a profound link with the notion of correlated equilibrium. In classical game theory, a correlated equilibrium arises when players coordinate their strategies via a shared signal~\cite{Aumann1974}. In the context of large populations and
mean--field games, recent literature introduces mean--field analogues of correlated equilibrium, where the population distribution plays the role of a shared correlating signal~\cite{HartSchmeidler1989, CampiF2022}. In our macroscopic system, the mean--field vector~${\bf r}^t$ and the normalizing denominator $z^t$ fulfill this exact function, naturally driving the system toward aggregated, cooperative outcomes rather than isolated competitive Nash equilibria.

We investigate the behavior of the trajectories of a discrete-time dynamical system in terms of stochastic vectors
\begin{equation}\label{eq:1}
 {\bf p}^t \stackrel{\divideontimes, t }{\longrightarrow } {\bf p}^{t+1}, \quad t=0,1,\ldots \quad ({\bf p}^0\equiv {\bf p}),
\end{equation}
where $\divideontimes$ denotes the transformation governing the system's dynamics over the state space~$\Omega$.

In coordinate terms, the law of conflict dynamics is defined as:
\begin{equation}
\label{eq:3.1}
p^{t+1}_i=\frac {p^{t}_i\big(1+{c_{i}}r^{t}_i\big)}{z^{t}},  \quad 0 < c_i \leq 1, \quad i = 1, \ldots, n,
\end{equation}
where $r^{t}_i$ is the mean--field term defined as
\begin{equation}
\label{eq:3.1r}
r^{t}_i=\frac{1-p^{t}_i}{n-1}.
\end{equation}
The parameters $c_i$ represent the time-invariant ($t$-independent) favorable conditions explicitly associated with each state~$\omega_i$, thereby dictating the strength of the attractive interaction for that component.
Finally, the normalizing denominator $z^t$, defined explicitly as
\begin{equation*}
z^t = \sum_{j=1}^n p^{t}_j\big(1+{c_{j}}r^{t}_j\big),
\end{equation*}
ensures the stochasticity of the vector ${\bf p}^{t+1}$ (i.e., $\sum_{i=1}^n p_i^{t+1} = 1$).

The original mathematical idea underlying the transformation~$\divideontimes$ stems from the ``theorem of conflicts'' for pairs of discrete probability measures (or stochastic vectors), introduced by V.~Koshmanenko \cite{KoTC1, KoTC}. The foundational law of conflict dynamics was defined in its generic symmetric form as:
\begin{equation*}\label{simeq}
  p_i^{t+1}= \frac{1}{z^t}p_i^t(1- r_i^t), \quad r_i^{t+1}= \frac{1}{z^t}r_i^t(1- p_i^t),
\end{equation*}
where $z^t = 1 - ({\bf p}^t, {\bf r}^t)$ is the normalizing denominator (with $(\cdot, \cdot)$ denoting the scalar product in $\mathbb{R}^n$). The governing equation~\eqref{eq:3.1} studied in this work is a direct adaptation of this original equation, obtained by explicitly defining the opposing vector ${\bf r}^t$ as the mean--field distribution~\eqref{eq:3.1r} and introducing the coefficients $c_i$ to scale the interaction strength.

It is worth noting that in works \cite{KKKK18, KK19}, a mathematical model of a conflict dynamical system describing the behavior of individuals in a society has been developed. The dynamics in that model is described by equations with repulsive interaction:
$$
p^{t+1}_i=\frac {p^{t}_i\big(1-{c_{i}}r^{t}_i\big)}{z^{t}}, \quad 0 \leq c_i \leq 1.
$$
It can be noted that the ``game for power'' model proposed in \cite{KKKK18} is a prime example of a system with repulsion, where agents seek to minimize the influence of others. The model presented in this manuscript, featuring attractive interaction (\eqref{eq:3.1}), can thus be interpreted as a ``game for cooperation'' or alliance formation, where an agent's benefit increases by gravitating towards resources controlled by others.
The choice of attractive interaction fundamentally changes the system's dynamics and significantly affects its properties.

We further analyze the behavior of the trajectories \eqref{eq:1}, defined by equations~\eqref{eq:3.1}, in terms of the coordinates of the stochastic vectors ${\bf p}^t$, ${\bf r}^t$.

\section{Uniformly distributed equilibria}\label{subsection341}
Assume that in formulas \eqref{eq:3.1}, all $c_i=1$ for all $i = 1, \ldots, n$, meaning favorable conditions are identical for each state~$\omega_i$ and remain unchanged over time. Then equation \eqref{eq:3.1} can be rewritten as
\begin{equation}
\label{eq:2}
p^{t+1}_i=\frac {p^{t}_i\big(1+r^{t}_i\big)}{1+ \left({\bf p}^{t}, {\bf r}^{t}\right)}, \quad i = 1, \ldots, n.
\end{equation}
Next, we investigate the properties of trajectories \eqref{eq:1} in this case. Note that the behavior of the dynamic system is analogous to the case when all constants $c_i$ are equal to some positive number $c$. For simplicity of proof, the case where $c_i=1$ is considered. Below, it will be proven that the change in coordinates of vectors ${\bf p}^t$, ${\bf r}^t$ according to equations \eqref{eq:2} leads to all non-zero coordinates of these vectors becoming equal to each other as $t \rightarrow \infty$, which corresponds to a uniform distribution on the space $\Omega$.

\begin{remark}
The equality ${\bf p} = {\bf r}$ is possible if and only if $p_{i}=r_{i}= \frac{1}{n}$ for all $i=1, \ldots, n$.
\end{remark}

The fixed point where $p_{i}^{\infty} = \frac{1}{n}$ is known in the multi-agent systems literature as the {consensus state} \cite{8, Li2011, OlfatiSaber2004}. It is a direct analogue to the outcome of classic linear consensus protocols, such as the DeGroot model, where iterative averaging on a fully connected graph leads to a system-wide average \cite{Hegselmann2002}.
The model presented here, in this special case of uniform $c_i$, can thus be viewed as a novel, nonlinear consensus protocol. This perspective closely aligns with recent research, such as the work \cite{SatKhar20}, which constructs a discrete-time opinion formation model of a similar type. In their framework, analogous to our uniform case, the iterative interaction mechanism effectively neutralizes competitive differences and drives the multi-agent system toward a state of absolute consensus.

If we introduce the quantity
$$L^t: = \sum\limits_{i=1}^{n} \left(p^{t}_{i}\right)^{2} = \|{\bf p}^{t}\|^{2},$$
then the transformation $\divideontimes$ can be rewritten solely in terms of the coordinates of the vector~${\bf p}^t$:
\begin{equation}
\label{eq:3}
p^{t+1}_i=p^{t}_i k^t_i, \quad k^t_i= \frac {n-p^{t}_i}{n-L^{t}}.
\end{equation}

Let $M$ denote the set of all zero coordinates of the vector ${\bf p}$, i.e.,
$$ M = \big \lbrace p_{i}\colon p_{i} = 0 \big \rbrace,$$
and let $\gamma(M)$ denote the cardinality of this set. From equations \eqref{eq:3}, it is clear that $M$ and $\gamma(M)$ do not depend on $t$.

\begin{theorem}\label{th1pop}
Every trajectory of the dynamic conflict system \eqref{eq:1} with an arbitrary pair of initial stochastic vectors ${{\bf p}, {\bf r}\in{\mathbb R}_+^n}$ $(n>1)$, whose evolution is given by formulas \eqref{eq:3}, converges to a fixed point
$$ {\bf p}^{\infty} = \lim_{t\to \infty} {\bf p}^t, \quad {\bf r}^{\infty} = \lim_{t\to \infty} {\bf r}^t.$$
Furthermore,
\begin{enumerate}
\item[$1)$] if $ \gamma(M) = 0$, then ${{\bf p}^{\infty}={\bf r}^{\infty}}$ and
$$ p_{i}^{\infty} = r_{i}^{\infty} = \frac{1}{n};$$
\item[$2)$] if $ \gamma(M) = m$, $m < n$, then ${{\bf p}^{\infty} \neq {\bf r}^{\infty}}$ and for all $p_{j} \in M $
$$ p_{j}^{\infty} = 0, \qquad r_{j}^{\infty} = \frac{1}{n-1},$$
and for all $ p_{i} \notin M $
$$p_{i}^{\infty} = \frac{1}{n-m}, \quad r_{i}^{\infty} = \frac{n-m-1}{(n-1)(n-m)}.$$
\end{enumerate}
The equilibrium is stable only in the case ${\bf p}^{\infty} ={\bf r}^{\infty} = \left( \frac{1}{n}, \frac{1}{n}, \ldots, \frac{1}{n}\right)$.
\end{theorem}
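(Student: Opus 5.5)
The plan is to work entirely with the reduced form \eqref{eq:3}, $p_i^{t+1}=p_i^t\,\frac{n-p_i^t}{n-L^t}$, and to use the ratio structure of the map. First, $M$ is invariant: a zero coordinate stays zero. A positive coordinate stays positive, because $n-p_i^t>0$ and $n-L^t\ge n-1>0$. So without loss of generality we restrict to the $n-m$ positive coordinates, which form the support $S$.

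For $i,j\in S$ the denominator cancels in the ratio:
$$\frac{p_i^{t+1}}{p_j^{t+1}}=\frac{p_i^t}{p_j^t}\cdot\frac{n-p_i^t}{n-p_j^t}.$$
This is the key monotonicity observation. If $p_i^t>p_j^t$, the factor $(n-p_i^t)/(n-p_j^t)$ is less than $1$, so the larger coordinate loses relative weight. The map $x\mapsto x(n-x)$ is increasing on $[0,1]$ (since $n\ge 2$), so the order between coordinates is preserved. Hence
$$\max_{i\in S}p_i^t \text{ is nonincreasing}, \qquad \min_{i\in S}p_i^t \text{ is nondecreasing}.$$
To see this, write $p_{\max}^{t+1}=p_{\max}^t\,(n-p_{\max}^t)/(n-L^t)$ and note $L^t=\sum p_i^2\le p_{\max}^t\sum p_i=p_{\max}^t$, which gives $p_{\max}^{t+1}\le p_{\max}^t$. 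The symmetric inequality $L^t\ge p_{\min}^t$ handles the minimum.

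Therefore the ratio $\rho^t=p_{\max}^t/p_{\min}^t\ge1$ is nonincreasing, since the extreme indices stay extreme by order preservation. I will show $\rho^t\to1$. The ratio satisfies
$$\rho^{t+1}=\rho^t\,\frac{n-p_{\max}^t}{n-p_{\min}^t}.$$
Since $p_{\min}^t\ge p_{\min}^0>0$, if $\rho^t\ge\rho^*>1$ for all $t$, then
$$p_{\max}^t-p_{\min}^t\ge(\rho^*-1)\,p_{\min}^0=:\delta.$$
The contraction factor is then at most $1-\delta/n<1$ uniformly in $t$, which forces $\rho^t\to0$, a contradiction with $\rho^t\ge1$. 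Hence $\rho^t\to1$. Combined with $\sum_{i\in S}p_i^t=1$, this gives $p_i^t\to\frac{1}{n-m}$ on $S$ and $0$ on $M$.

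The values of $r^\infty$ then follow from $r_i=(1-p_i)/(n-1)$:
$$r_j^\infty=\frac{1}{n-1}\ (j\in M), \qquad r_i^\infty=\frac{1-\frac1{n-m}}{n-1}=\frac{n-m-1}{(n-1)(n-m)}\ (i\in S).$$
This reduces to $\frac1n$ when $m=0$. By the Remark, $\mathbf p^\infty=\mathbf r^\infty$ only in the uniform case, which gives the inequality $\mathbf p^\infty\ne\mathbf r^\infty$ claimed for $m\ge1$.

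For stability, I will linearize at the fixed points. At a face point with $m\ge1$, take $j\in M$ and perturb $p_j=\varepsilon$. Then
$$p_j^{t+1}\approx\varepsilon\,\frac{n}{n-L^\infty}, \qquad L^\infty=\frac{1}{n-m}>0,$$
so the multiplier $n/(n-L^\infty)$ exceeds $1$ and the perturbation grows; that equilibrium is unstable in the full simplex. At the uniform point, the ratio argument above already gives attraction of every interior trajectory. For a Lyapunov-stability statement I will compute the Jacobian restricted to the tangent space $\sum\xi_i=0$. With $L=1/n$, one gets
$$\xi_i^{t+1}=\frac{(n-2/n)\,\xi_i-\frac1n\sum_k\frac2n\xi_k}{n-1/n},$$
and the sum term vanishes on the tangent space. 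The eigenvalue is therefore $(n^2-2)/(n^2-1)\in(0,1)$, so the uniform point is asymptotically stable.

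I expect the main obstacle to be establishing rigorously that the extreme indices remain extreme, so that $\rho^t$ is genuinely monotone, together with the uniform contraction bound; the monotonicity of $x(n-x)$ on $[0,1]$ is what I would rely on for both.
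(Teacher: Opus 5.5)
Your proof is correct. The stability part follows the paper: you get the eigenvalue $\frac{n^2-2}{n^2-1}$ on the tangent space at the barycenter and the transversal multiplier $n/(n-L^\infty)>1$ at face points. The one difference is that you treat every face point with $1\le m\le n-1$, whereas the paper only computes the case of a single zero coordinate. The sum term in your linearization should carry a plus sign, since $\partial_{p_k}(n-L)^{-1}=2p_k/(n-L)^2$; it vanishes on $\sum_k\xi_k=0$, so nothing changes.

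The convergence part takes a genuinely different route. The paper shows that $L^t$ is nonincreasing (Lemma~\ref{l1}) and bounded below, and takes monotone limits of the smallest and largest coordinates. It then passes to the limit in \eqref{eq:3} to get $p_1^\infty=p_n^\infty=L^\infty$.

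You instead use the cancellation of the normaliser $n-L^t$ in ratios, so that $\rho^t=p^t_{\max}/p^t_{\min}$ obeys a closed recursion. The bound $p^t_{\min}\ge p^0_{\min}$ then turns that recursion into a strict contraction. The order preservation you establish (the paper's Proposition~\ref{pr1}) already disposes of the obstacle you anticipated, ties included.

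Your route needs neither the monotonicity of $L^t$, nor its lower bound, nor the limit passage, and it gives more. Set $\varepsilon_t=\rho^t-1$ and $c_t=p^t_{\min}/(n-p^t_{\min})\ge p^0_{\min}/n$. Your recursion then reads $\varepsilon_{t+1}=\varepsilon_t\bigl(1-c_t-\varepsilon_t c_t\bigr)$, hence $\varepsilon_{t+1}\le\bigl(1-p^0_{\min}/n\bigr)\varepsilon_t$, a global geometric rate. For $m=0$ the factor tends to $1-\frac{1}{n^2-1}=\frac{n^2-2}{n^2-1}$, which recovers the paper's local eigenvalue without any Jacobian.

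The paper's route, for its part, shows along the way that the concentration index $L^t=\|{\bf p}^t\|^2$ decreases monotonically. That is of independent interest, but it is not needed for the theorem and your argument does not produce it.
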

For the proof of this theorem, we will use the following statements and lemmas.

From equations \eqref{eq:3} it follows that when $p_{i}^{t}>L^{t}$
$$p_{i}^{t} > p_{i}^{t+1}, $$
and when $p_{i}^{t}<L^{t}$
$$p_{i}^{t} < p_{i}^{t+1}.$$

\begin{proposition}\label{pr1}
If $p_i \geq p_j$ for some $i$, $j$, then for all ${t = 0, 1, \ldots}$
$$ p^{t}_i \geq p^{t}_j .$$
\end{proposition}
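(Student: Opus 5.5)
Since the order of the coordinates only changes from one step to the next, the statement follows by induction on $t$ once we prove the one-step claim: if $p^t_i \ge p^t_j$, then $p^{t+1}_i \ge p^{t+1}_j$. The base case $t=0$ is the hypothesis $p_i \ge p_j$. We work with the reduced form \eqref{eq:3}, where both coordinates are multiplied by factors $k^t_i$ and $k^t_j$ that share the positive denominator $n-L^t$. This denominator is positive because $L^t=\|{\bf p}^t\|^2\le 1<n$.

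Clearing the common denominator, the one-step claim becomes
$$
p^t_i(n-p^t_i) \ \ge\ p^t_j(n-p^t_j).
$$
We therefore introduce the scalar function $f(x)=x(n-x)$ on $[0,1]$; the inequality above is just $f(p^t_i)\ge f(p^t_j)$. Its derivative is $f'(x)=n-2x\ge n-2\ge 0$ for $x\in[0,1]$ and $n\ge 2$, so $f$ is nondecreasing on $[0,1]$. When $n>2$ it is even strictly increasing, and for $n=2$ it is strictly increasing on $[0,1)$. Since every coordinate $p^t_i$ lies in $[0,1]$, the relation $p^t_i\ge p^t_j$ gives $f(p^t_i)\ge f(p^t_j)$. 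Dividing by $n-L^t$ yields $p^{t+1}_i\ge p^{t+1}_j$, which closes the induction.

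The argument has only one delicate point: the monotonicity of $f$ at the edge case $n=2$, $x=1$. It matters only when one coordinate equals $1$, and then ${\bf p}^t$ is a vertex. Such a vertex is fixed, and the inequality holds trivially. It is also worth noting that strictness propagates: if $p_i>p_j$, the same computation gives $p^t_i>p^t_j$ for all $t$ (away from that degenerate vertex), and this will be useful later in the proof of Theorem~\ref{th1pop}.
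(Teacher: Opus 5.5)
Your proof is correct and follows the paper's argument: the paper computes $p^{t+1}_i-p^{t+1}_j=\frac{n-p^t_i-p^t_j}{n-L^t}\,(p^t_i-p^t_j)\ge 0$ and inducts on $t$, and this factorization is exactly your monotonicity of $f(x)=x(n-x)$ written out. Your $n=2$ edge-case caveat is unnecessary, because for $i\ne j$ we have $p^t_i+p^t_j\le 1$, so $n-p^t_i-p^t_j\ge n-1>0$, and strict inequality always propagates.
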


\begin{proof}
Let $p_i \geq p_j$. Consider the difference
\begin{gather*}
\begin{split}
d^{1}_{ij} & = {p^{1}_{i}-p^{1}_{j}} = {\frac{1}{n-L}\big(p_{i}(n-p_{i})- p_{j}(n-p_{j})\big)} \\
& ={\frac{1}{n-L}\big(n(p_i-p_j)-(p_i+p_j)(p_i-p_j)\big)} ={\frac{n-p_i-p_j}{n-L}}(p_i-p_j) \geq 0.
\end{split}
\end{gather*}
Thus, ${p^{1}_{i}}\geq {p^{1}_{j}}$.
By analogous reasoning, we obtain $ {p^{t}_{i}}\geq {p^{t}_{j}}$ for any $t$.
\end{proof}

\begin{proposition}\label{pr2}
If for some $i=1, \ldots, n$
$$p_i = 0,$$ then for any $t$ $$ p^{t}_i = 0, \quad r^{t}_i = \frac{1}{n-1}.$$
\end{proposition}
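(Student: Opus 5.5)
The plan is a direct induction on $t$ using the coordinate form \eqref{eq:3} of the transformation $\divideontimes$. The zero coordinate will propagate trivially, because $p^{t+1}_i$ is $p^t_i$ multiplied by a finite factor $k^t_i$. The value of $r^t_i$ then follows immediately from the definition \eqref{eq:3.1r} of the mean field.

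\textbf{Base case.} For $t=0$ we have $p^0_i=p_i=0$ by hypothesis. Then \eqref{eq:3.1r} gives $r^0_i=\frac{1-0}{n-1}=\frac{1}{n-1}$.

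\textbf{Inductive step.} Assume $p^t_i=0$. By \eqref{eq:3},
$$p^{t+1}_i=p^t_i\,\frac{n-p^t_i}{n-L^t}=0\cdot\frac{n}{n-L^t}=0.$$
Substituting into \eqref{eq:3.1r} again gives $r^{t+1}_i=\frac{1}{n-1}$. Equivalently, we could argue from \eqref{eq:2}: its numerator $p^t_i(1+r^t_i)$ vanishes and its denominator is positive.

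\textbf{The one point needing care.} The only step requiring justification is that the factor $k^t_i$ is well defined, i.e.\ that $n-L^t\neq 0$. Since ${\bf p}^t$ is a stochastic vector, its coordinates lie in $[0,1]$, so
$$L^t=\sum_j (p^t_j)^2\le \sum_j p^t_j = 1 < n,$$
using $n>1$. Hence $n-L^t\ge n-1>0$.

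I would record this bound as part of the induction. It also guarantees that ${\bf p}^{t+1}$ is again stochastic, which is needed to reapply the bound at the next step: nonnegativity holds because $n-p^t_j>0$, and the coordinates sum to one by the normalization in \eqref{eq:2}. There is no substantive obstacle beyond checking this well-definedness. The proposition also confirms the earlier observation that $M$ and $\gamma(M)$ are independent of $t$.
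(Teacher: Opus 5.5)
Your proof is correct and takes the same route as the paper, whose entire argument is that the claim "follows from equations \eqref{eq:2}", i.e.\ from the multiplicative form of the update. Your induction spells this out and adds the useful check that $n-L^t\ge n-1>0$, which guarantees that the factor $k^t_i$ is well defined and that ${\bf p}^{t+1}$ stays stochastic.
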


\begin{proof}
This follows from equations \eqref{eq:2}.
\end{proof}

\begin{proposition} \label{pr4}
Let $p_{\min}^{t}$ and $p_{\max}^{t}$ be the minimum and maximum coordinates of the vector ${\bf p}^t$ respectively, and $L^t = \|{\bf p}^{t}\|^{2}$. Then for any ${t = 0, 1, \ldots}$ the following inequalities hold:
\begin{equation}\label{kn}
p_{\min}^{t} \leq L^{t} \leq p_{\max}^{t}.
\end{equation}
\end{proposition}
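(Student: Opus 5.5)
The inequality is a statement about a single stochastic vector and does not involve the dynamics at all. I will prove it for an arbitrary stochastic vector ${\bf p}$, and then apply it to ${\bf p}^t$ for each $t$. This is legitimate because, as noted after \eqref{eq:2}, the transformation $\divideontimes$ preserves stochasticity: $p_i^t\ge 0$ and $\sum_i p_i^t=1$ for every $t$. So no induction on $t$ and no use of Propositions~\ref{pr1} or \ref{pr2} should be needed.

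The key observation is that $L^t=\sum_i (p_i^t)^2=\sum_i p_i^t\cdot p_i^t$ is a convex combination of the numbers $p_1^t,\dots,p_n^t$, with weights $p_i^t\ge 0$ summing to $1$. In other words, it is the mean of the coordinate value under the distribution ${\bf p}^t$ itself. A weighted average always lies between the minimum and the maximum of the averaged quantities.

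Concretely, I will bound each term from above: $p_i^t\cdot p_i^t\le p_i^t\, p_{\max}^t$. Summing over $i$ and using $\sum_i p_i^t=1$ gives $L^t\le p_{\max}^t$. Symmetrically, $p_i^t\cdot p_i^t\ge p_i^t\, p_{\min}^t$, which after summing gives $L^t\ge p_{\min}^t$. Both steps use only the nonnegativity of the weights $p_i^t$.

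I will add one remark. If some coordinates are zero (the case $\gamma(M)>0$), then $p_{\min}^t=0$ and the lower bound is trivial. The sharper statement would have $p_{\min}^t$ taken over the nonzero coordinates, and it follows by the same argument, since the zero coordinates carry zero weight. Equality holds only when all coordinates with positive weight are equal. This is consistent with the monotonicity dichotomy stated before Proposition~\ref{pr1}: coordinates above $L^t$ decrease and those below increase.

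There is no real obstacle here. The only point needing care is to state explicitly that ${\bf p}^t$ remains a stochastic vector for all $t$, so that the convex-combination argument applies at every time step.
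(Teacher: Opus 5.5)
Your proof is correct and matches the paper's argument: the same term-by-term bound $p_{\min}^t=\sum_i p_i^t p_{\min}^t \le \sum_i (p_i^t)^2 \le \sum_i p_i^t p_{\max}^t = p_{\max}^t$, using only that ${\bf p}^t$ is stochastic. The paper also cites Proposition~\ref{pr1} at this step, but, as you note, it plays no role in the argument.
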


\begin{proof}
From Proposition \ref{pr1} and the stochasticity of the vector ${\bf p}^{t}$, it follows that for any $t = 0, 1, \ldots$
	\begin{equation*} p^{t}_{\min} \leq \sum_{i=1}^{n} p^{t}_{i} p^{t}_{\min} \leq \sum_{i=1}^{n} \big(p^{t}_{i}\big)^{2} \leq \sum_{i=1}^{n} p^{t}_{i} p^{t}_{\max} \leq p^{t}_{\max}. \tag*{\qed}\end{equation*}\renewcommand{\qed}{}
\end{proof}

It is obvious that the sequence ${\lbrace p^{t}_{\max} \rbrace}_{t = 0}^{\infty}$ is decreasing, since $p_{\max}^t \geq L^{t}$ for any $t$.

\begin{lemma}\label{l1}
    The sequence $L^{t} = \|{\bf p}^{t}\|^{2}$, $t = 0, 1, \ldots$ is decreasing.
\end{lemma}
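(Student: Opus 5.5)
The plan is to prove the inequality $L^{t+1}\le L^{t}$ directly from the representation \eqref{eq:3}; it suffices to treat one step $t\to t+1$. Since $p^{t+1}_i=p^t_i(n-p^t_i)/(n-L^t)$, we have
\begin{equation*}
L^{t+1}=\frac{1}{(n-L^t)^2}\sum_{i=1}^n \big(p^t_i\big)^2\big(n-p^t_i\big)^2 .
\end{equation*}
We drop the index $t$ and write $L=\sum p_i^2$, $S_3=\sum p_i^3$, $S_4=\sum p_i^4$. Note that $n-L>0$ because $L\le 1<n$. Expanding both sides, the desired inequality $\sum p_i^2(n-p_i)^2\le L(n-L)^2$ becomes
\begin{equation*}
n^2L-2nS_3+S_4\le n^2L-2nL^2+L^3,
\end{equation*}
that is,
\begin{equation*}
S_4-L^3\le 2n\,(S_3-L^2).
\end{equation*}

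To handle this, I would interpret the sums probabilistically. Since $\mathbf p$ is stochastic, let $X$ be the random variable taking the value $p_i$ with probability $p_i$. Then
\begin{equation*}
\mathbb E X=L,\qquad \mathbb E X^2=S_3,\qquad \mathbb E X^3=S_4 .
\end{equation*}
With $m=L$, the right-hand side becomes $2n\,\mathrm{Var}(X)=2n\,\mathbb E(X-m)^2\ge 0$; this is essentially Cauchy--Schwarz, $S_3\ge L^2$. For the left-hand side, the factorization $x^3-m^3=(x-m)(x^2+xm+m^2)$ together with $\mathbb E(X-m)=0$ gives
\begin{equation*}
\mathbb E X^3-m^3=\mathbb E\big[(X-m)\big(X^2+Xm-2m^2\big)\big]=\mathbb E\big[(X-m)^2(X+2m)\big].
\end{equation*}

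The key estimate is then elementary. Since $0\le X\le 1$ and $0\le m\le 1$, we have $X+2m\le 3\le 2n$ for every $n\ge 2$. Therefore
\begin{equation*}
S_4-L^3\le 3\,\mathbb E(X-m)^2\le 2n\,(S_3-L^2),
\end{equation*}
which yields $L^{t+1}\le L^t$ for all $t$. As a byproduct, equality holds only when $\mathrm{Var}(X)=0$, i.e. all nonzero coordinates of $\mathbf p^t$ are equal. In that case $\mathbf p^t$ is already a fixed point, which is consistent with Theorem~\ref{th1pop}. Otherwise the decrease is strict.

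The main obstacle is spotting the right rewriting of $S_4-L^3$ so that it is comparable with the variance $S_3-L^2$. A naive termwise comparison fails, because individual summands $p_i^2(n-p_i)^2$ versus $p_i^2(n-L)^2$ go in both directions (compare Proposition~\ref{pr4}). The weighted-moment identity $\mathbb E X^3-m^3=\mathbb E[(X-m)^2(X+2m)]$ resolves this. If I prefer to avoid probabilistic language, I would present the same identity as $\sum p_i(p_i-L)^2(p_i+2L)$ and verify it by direct expansion using $\sum p_i=1$.
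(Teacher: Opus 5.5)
Your proof is correct, and it organizes the computation differently from the paper. The paper factors $(n-p_i)^2-(n-L)^2=(L-p_i)\bigl(2(n-p_i)-(L-p_i)\bigr)$. This splits $(n-L^t)^2(L^{t+1}-L^t)$ into $2\sum_i q_i f(p_i)-\sum_i p_i^2(L-p_i)^2$, with $q_i=p_i(L-p_i)$ and $f(p)=p(n-p)$. The paper then disposes of the first sum by a Chebyshev-type sign argument: $\sum_i q_i=0$ and $f$ is increasing on $[0,1]$, so $q_i\bigl(f(p_i)-f(L)\bigr)\le 0$ for each $i$. You instead pass to power sums and the size-biased variable $X$, and use the identity $\mathbb E X^3-m^3=\mathbb E[(X-m)^2(X+2m)]$.

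Underneath, both arguments evaluate the same quantity. Since $f(p_i)-f(L)=(p_i-L)(n-p_i-L)$, the paper's two pieces combine to
\[
(n-L^t)^2\bigl(L^t-L^{t+1}\bigr)=\sum_{i=1}^n p_i^t\,\bigl(p_i^t-L^t\bigr)^2\,\bigl(2n-p_i^t-2L^t\bigr),
\]
which is exactly your $2n\,\mathrm{Var}(X)-\mathbb E[(X-m)^2(X+2m)]$.

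The paper's route uses only the monotonicity of $f$ and never writes this weighted sum of squares. Your route makes the weight $2n-p_i^t-2L^t\ge 2n-3>0$ explicit, and that buys you two things. First, you get the equality case (all nonzero coordinates equal, i.e. $\mathbf p^t$ is already a fixed point), which the paper leaves implicit in the strict inequality of its Case 2. Second, you get the quantitative decrement
\[
L^t-L^{t+1}\ge \frac{2n-3}{(n-L^t)^2}\sum_i p_i^t\bigl(p_i^t-L^t\bigr)^2,
\]
which carries more information than a bare sign.
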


\begin{proof} Let us express the difference $L^{t+1} - L^t$ using equations \eqref{eq:3}:
\begin{equation*}
L^{t+1} - L^t = \sum_{i=1}^n \left( p^t_i \cdot \frac{n-p^t_i}{n-L^t} \right)^2 - L^t = \frac{1}{\big(n-L^t\big)^2} \sum_{i=1}^n \big(p^t_i\big)^2 \left( \big(n-p^t_i\big)^2 - \big(n-L^t\big)^2 \right).
\end{equation*}

Through algebraic rearrangement, $(n-p^t_i)^2 - (n-L^t)^2 = (L^t-p^t_i)(2n-L^t-p^t_i)$,
and rewriting the second factor as $2(n-p^t_i) - (L^t-p^t_i)$, we expand the numerator:
\begin{equation*}
(n-L^t)^2 (L^{t+1} - L^t) = 2 \sum_{i=1}^n p^t_i(L^t-p^t_i) p^t_i(n-p^t_i) - \sum_{i=1}^n \big(p^t_i\big)^2\big(L^t-p^t_i\big)^2.
\end{equation*}

The second sum $\sum\limits_{i=1}^n \big(p^t_i\big)^2\big(L^t-p^t_i\big)^2 \ge 0$, so its contribution to the difference is non-positive.

Considering the first sum, we define $q^t_i = p^t_i(L^t-p^t_i)$ and $f(p_i^t) = p_i^t(n-p_i^t)$.
Notice that the sum of the elements $q^t_i$ is exactly zero:
$$\sum_{i=1}^n q^t_i = L^t \sum_{i=1}^n p^t_i - \sum_{i=1}^n \big(p^t_i\big)^2 = L^t \cdot 1 - L^t = 0.$$
Since $n \ge 2$ and $p_i^t \in [0, 1]$, the derivative $f'(p_i^t) = n-2p_i^t \ge 0$, meaning $f(p_i^t)$ is strictly increasing on $[0, 1]$.

Let us evaluate the product $q^t_i f(p^t_i)$ relative to the constant $f(L^t)$.
Since $f$ is increasing on $[0,1]$, we consider two possible cases.

\medskip
\noindent
\textit{Case 1:} $p^t_i \le L^t$.
In this case $q^t_i = p^t_i(L^t-p^t_i) \ge 0$ and, by the monotonicity of $f$,
$$f(p^t_i) \le f(L^t).$$
Therefore,
$$q^t_i f(p^t_i) \le q^t_i f(L^t).$$

\medskip
\noindent
\textit{Case 2:} $p^t_i > L^t$.
Then $q^t_i < 0$ and $f(p^t_i) > f(L^t)$. Multiplying this inequality by the
negative factor $q^t_i$ reverses its direction, which yields
$$q^t_i f(p^t_i) < q^t_i f(L^t).$$

\medskip
Thus, in both cases the inequality
$$q^t_i f(p^t_i) \le q^t_i f(L^t)$$
holds for all $i=1,\ldots,n$. Summing over $i$ gives
$$\sum_{i=1}^n q^t_i f(p^t_i) \le \sum_{i=1}^n q^t_i f(L^t) = f(L^t)\sum_{i=1}^n q^t_i = f(L^t)\cdot 0 = 0.$$

Since both terms in the expansion of $L^{t+1} - L^t$ are non-positive, we conclude that
$$L^{t+1} - L^t \le 0.$$
Thus, the sequence $\{L^t\}_{t=0}^\infty$ is decreasing.
\end{proof}

\begin{lemma}\label{l3}
If $p_i > L$, then the sequence $ {\lbrace p^{t}_i \rbrace}_{t = 0}^{\infty}$ is decreasing.
\end{lemma}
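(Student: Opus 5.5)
By \eqref{eq:3}, $p_i^{t+1}<p_i^t$ exactly when $p_i^t>L^t$. So it suffices to show by induction on $t$ that the hypothesis $p_i^t>L^t$ propagates: if $p_i^t>L^t$, then $p_i^{t+1}>L^{t+1}$. The claim is then immediate, since $p_i^{t+1}<p_i^t$ for every $t$. Here $L=L^0$ and $p_i=p_i^0$, so the base case is exactly the assumption of the lemma.

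For the inductive step, suppose $p_i^t>L^t$. Then $p_i^{t+1}<p_i^t$, and by Lemma \ref{l1} also $L^{t+1}\le L^t$. These two facts alone are not enough, because both quantities decrease. We need a direct comparison of $p_i^{t+1}$ with $L^{t+1}$, and I would get it in three steps.

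\textbf{Step 1 (reduce to the largest coordinate).} Let $i$ satisfy $p_i^t>L^t$, and let $k$ be an index with $p_k^t=p^t_{\max}$. By Proposition \ref{pr1} the ordering of coordinates is preserved, so $k$ remains a maximizer at every later time. It is cleanest to first prove the claim for a maximal coordinate and then transfer it to the others.

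\textbf{Step 2 (a closed formula for $L^{t+1}$).} Write
$$L^{t+1}=\sum_j (p_j^t)^2\frac{(n-p_j^t)^2}{(n-L^t)^2}.$$
The inequality $p_i^{t+1}>L^{t+1}$ is then equivalent to
$$p_i^t(n-p_i^t)(n-L^t)>\sum_j (p_j^t)^2(n-p_j^t)^2.$$
Set $g(x)=x(n-x)$, which is increasing on $[0,1]$. The inequality becomes
$$g(p_i^t)\,(n-L^t)>\sum_j p_j^t\, g(p_j^t)\,(n-p_j^t).$$
I would try to prove this as a weighted-average statement. The right side is the expectation of $g(X)(n-X)$, where $X$ takes the value $p_j^t$ with probability $p_j^t$. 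The left side is $g(p_i^t)$ times $(n-\mathbb E X)$, because $\mathbb E X=L^t$.

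\textbf{Step 3 (the main obstacle).} The delicate point is that $p_i^t$ exceeds only the mean $\mathbb E X$, not $X$ pointwise. Coordinates larger than $p_i^t$ can make the right side large. For $i=k$ the maximizer the estimate is plausible, since every $X\le p_k$. Even there, the expectation of $g(X)(n-X)$ must be compared with $g(p_k)(n-\mathbb E X)$, which will need the concavity of $x\mapsto(n-x)$ together with a covariance sign: $g$ is increasing and $n-x$ is decreasing, so $\mathbb E[g(X)(n-X)]\le \mathbb E g(X)\,\mathbb E(n-X)\le g(p_k)(n-L^t)$ by Chebyshev's sum inequality.

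For a general $i$ with $p_i^t>L^t$ the estimate may genuinely fail. If it does, I would weaken the conclusion to eventual monotonicity. The fallback is to show that $p_i^t$ can never cross $L^t$ upward in a way that produces an increase exceeding the previous decrease. I would first test small cases, $n=3$ with a large gap between coordinates, to see whether the lemma holds as stated for non-maximal $i$.
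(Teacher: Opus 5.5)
Your induction framework (show that $p_i^t>L^t$ propagates) is the right one, but the proposal has a genuine gap. It never proves the propagation step for an arbitrary index $i$ with $p_i>L$, which is what the lemma asserts.

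\begin{itemize}
\item Step~3 only handles a maximal coordinate. There the Chebyshev argument with the pointwise bound $\mathbb{E}\,g(X)\le g(p_k)$ is fine.
\item The transfer to other indices promised in Step~1 is never given. Order preservation from Proposition~\ref{pr1} yields $p_i^{t+1}\le p_k^{t+1}$, which points the wrong way for concluding $p_i^{t+1}>L^{t+1}$.
\item You end by suggesting the claim may fail for non-maximal $i$ and retreat to eventual monotonicity. It does not fail.
\end{itemize}

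The missing idea is an intermediate comparison with the \emph{old} value $L^t$. You are right that $p_i^{t+1}<p_i^t$ and $L^{t+1}\le L^t$ alone are not enough. There is, however, a third easy fact. Since $g(x)=x(n-x)$ is increasing on $[0,1]$ and $p_i^t>L^t$,
\[
p_i^{t+1}-L^t=\frac{g(p_i^t)-g(L^t)}{n-L^t}=(p_i^t-L^t)\,\frac{n-p_i^t-L^t}{n-L^t}>0 .
\]
So the decreased coordinate still lies above the previous $L^t$. Lemma~\ref{l1} then gives $p_i^{t+1}>L^t\ge L^{t+1}$, which closes the induction for every such $i$, maximal or not. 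This is exactly the paper's argument.

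Your own weighted-average framework would also have worked for general $i$. Replace the pointwise bound $\mathbb{E}\,g(X)\le g(p_k)$ by Jensen's inequality for the concave function $g$, using $\mathbb{E}X=L^t$:
\[
\mathbb{E}\bigl[g(X)(n-X)\bigr]\le \mathbb{E}\,g(X)\,(n-L^t)\le g(L^t)\,(n-L^t)<g(p_i^t)\,(n-L^t).
\]
This needs no maximality. As a side benefit, the middle inequality is precisely $(n-L^t)^2L^{t+1}\le (n-L^t)^2L^t$, so it also reproves Lemma~\ref{l1}.
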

\begin{proof} Let $p_i > L$. Then
\begin{equation*}
 p_{i}^{1} - p_{i} = \frac{p_{i}(n-p_{i})-p_{i}(n-L)}{n-L} = \frac{p_{i}(L-p_{i})}{n-L} < 0,
\end{equation*}
which means $p^{1}_{i}<p_{i}$. Let's show that $p_{i}^{1}> L^{1}$. From \eqref{eq:3} we have
\begin{gather*} \begin{split}
 p_{i}^{1}-L &= \frac{np_{i} - (p_{i})^{2}-nL + (L)^{2}}{n-L} = \frac{n(p_{i}-L)-\big((p_{i})^{2}- (L)^{2}\big)}{n-L} \\
 &= (p_{i}-L)\frac{n-p_{i}-L}{n-L} >0.
\end{split}\end{gather*}
With consideration of Lemma \ref{l1}, we have $p_{i}^{1} > L^{1}$.

By the method of mathematical induction, we obtain the validity of the inequality $p_{i}^{t} > L^{t}$ for any $t$. It follows that the sequence
$\lbrace p^{t}_i \rbrace_{t = 0}^{\infty}$ is decreasing, if $p_i > L$.
\end{proof}

\begin{lemma} \label{pr3(1)}
 The sequence $L^{t} = \|{\bf p}^{t}\|^{2}$, $t = 0, 1, \ldots$ changes within the limits:
 $$\frac{1}{n-m} \leq L^{t} \leq 1, $$
 where  $m=\gamma(M)$.
\end{lemma}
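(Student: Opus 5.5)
The plan is to prove the two bounds separately. Both follow from elementary inequalities applied to the stochastic vector ${\bf p}^t$ at each fixed time $t$. The dynamics only enter through Proposition~\ref{pr2}, which tells us the support of ${\bf p}^t$ never changes.

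For the upper bound, note that each coordinate satisfies $0\le p_i^t\le 1$, so $(p_i^t)^2\le p_i^t$. Summing over $i$ and using $\sum_i p_i^t=1$ gives $L^t\le 1$. Alternatively, Proposition~\ref{pr4} gives $L^t\le p^t_{\max}\le 1$ directly. This holds for every $t$ with no further work.

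For the lower bound, first invoke Proposition~\ref{pr2}: the coordinates with $p_j=0$ remain zero for all $t$. I also need the converse, namely that coordinates which are positive at time $0$ stay positive. This follows from \eqref{eq:3}, because $k_i^t=(n-p_i^t)/(n-L^t)>0$ (both $p_i^t\le 1<n$ and $L^t\le 1<n$). Hence at every time $t$ the vector ${\bf p}^t$ has exactly $n-m$ nonzero coordinates, where $m=\gamma(M)$, and these sum to $1$. Applying the Cauchy--Schwarz inequality to these $n-m$ coordinates gives
\begin{equation*}
1=\Big(\sum_{i\notin M} p_i^t\Big)^2\le (n-m)\sum_{i\notin M}\big(p_i^t\big)^2=(n-m)L^t,
\end{equation*}
that is, $L^t\ge \frac{1}{n-m}$.

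There is no real obstacle here. The only point requiring care is the bookkeeping of the support: Cauchy--Schwarz must be applied over the index set of size $n-m$, not $n$, because otherwise we only obtain the weaker bound $1/n$. This is exactly why invariance of the zero set (Proposition~\ref{pr2} together with positivity of $k_i^t$) is needed. As a remark, equality holds in the lower bound exactly when all nonzero coordinates are equal to $1/(n-m)$, which is the limiting configuration in Theorem~\ref{th1pop}. Together with Lemma~\ref{l1}, this shows that $L^t$ is monotone and bounded, so it converges.
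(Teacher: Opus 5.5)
Your proof is correct. It shares the paper's overall structure (upper bound from stochasticity, lower bound by minimizing $\sum_i p_i^2$ on the $n-m$ coordinates that can be nonzero), but the key step for the lower bound is different.

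The paper obtains the lower bound by Lagrange multipliers. It finds the critical point $(\frac1n,\dots,\frac1n)$, checks ${\rm d}^2\mathcal{L}>0$, and then repeats the computation for the $n-m$ nonzero coordinates. It appeals to the invariance of the zero set only as ``from the problem's construction''. You instead apply Cauchy--Schwarz over the index set $\{i\notin M\}$.

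Your route has two advantages. First, it yields the global inequality $L^t\ge\frac{1}{n-m}$, together with its equality case, in one line. The paper's second-order test only certifies a \emph{local} conditional minimum, and upgrading it to a global bound needs an extra argument it does not state, for instance strict convexity of $\sum_i p_i^2$ on the affine set $\sum_i p_i=1$. Second, you derive the invariance of the zero set explicitly from Proposition~\ref{pr2} rather than leaving it implicit. The paper's route, in turn, exhibits the minimizer directly, and that is the configuration later identified as the limit in Theorem~\ref{th1pop}.

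One small overstatement: you do not actually need the converse, that positive coordinates stay positive because $k_i^t>0$. Cauchy--Schwarz over the fixed set of $n-m$ indices gives $1\le (n-m)L^t$ as soon as $p_j^t=0$ for all $j\in M$. This holds whether or not some of the remaining coordinates also vanish. So Proposition~\ref{pr2} alone suffices for the inequality.
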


\begin{proof} From the stochasticity of the vector ${\bf p}^t$ it follows that $L^{t} \leq 1$. Only in the case when $ \gamma(M) = n-1$, we have $L^{t} = 1$.

To prove the second part of the inequality, assume that $ \gamma(M) = 0$. Next, we will use the method of Lagrange multipliers to find the extrema of the function of $n$ variables $$ L\big(p_{1}, \ldots, p_{n}\big) = \sum_{i=1}^{n} \big(p_{i}\big)^{2}, \quad p_i \neq 0,$$
considering the constraint equation $$\phi\big(p_{1}, \ldots, p_{n}\big) = - 1 + \sum_{i=1}^{n} p_{i} = 0.$$

We form the Lagrangian function and denote it by $\mathcal{L}$:
$$\mathcal{L}(p_{1}, \ldots, p_{n}, \lambda) = \sum_{i=1}^{n} (p_{i})^{2} + \lambda \left(- 1 + \sum_{i=1}^{n} p_{i}\right),$$
where $\lambda$ is the so-called Lagrange multiplier.

Critical points are found from the following system:
\begin{equation*}
\begin{cases}
\frac{\partial\mathcal{L}}{\partial p_{1}} = 2p_{1}+ \lambda = 0,\\
\frac{\partial\mathcal{L}}{\partial p_{2}} = 2p_{2}+ \lambda = 0,\\
\cdots \\
\frac{\partial\mathcal{L}}{\partial p_{n}} = 2p_{n}+ \lambda = 0,\\
\phi(p_{1}, \ldots ,p_{n}) = - 1 + \sum\limits_{i=1}^{n} p_{i} = 0,
\end{cases}
\Longleftrightarrow
\begin{cases}
p_{1} = \frac{1}{n},\\
p_{2} = \frac{1}{n},\\
\cdots \\
p_{n} = \frac{1}{n},\\
\lambda = - \frac{2}{n}.
\end{cases}
\end{equation*}

Thus, we have a fixed point ${M'}\big(\frac{1}{n}, \frac{1}{n}, \ldots , \frac{1}{n}, -\frac{2}{n} \big)$.

Let's check the sufficient conditions for the extremum of the Lagrangian function. For this, we find the second partial derivatives of the function $\mathcal{L}(p_{1}, \ldots, p_{n}, \lambda)$:
$$\mathcal{L}^{''}_{p_{i}p_{i}} = 2, \quad \mathcal{L}^{''}_{p_{i}p_{k}} = 0, \quad i = 1, \ldots, n, \quad k = 1, \ldots, n, \quad i \neq k.$$

We write the second differential of the Lagrangian function:
$${\rm d}^{2}\mathcal{L}\big(p_{1}, \ldots, p_{n}, \lambda\big) = 2{\rm d}{p_{1}^{2}}+2{\rm d}{p_{2}^{2}}+ \cdots + 2{\rm d}{p_{n}^{2}} = 2\big({\rm d}{p_{1}^{2}}+{\rm d}{p_{2}^{2}}+ \cdots + {\rm d}{p_{n}^{2}}\big).$$

Since ${\rm d}^{2}\mathcal{L} > 0$, the function $\mathcal{L}(p_{1}, \ldots, p_{n}, \lambda)$ has a local minimum at point ${M'}$, $\mathcal{L}_{\min} = \mathcal{L}({M'}) = \frac{1}{n}$.

Thus, the function $L(p_{1}, \ldots, p_{n})$ under the constraint $\sum\limits_{i=1}^{n} p_{i} = 1$ has a local conditional minimum at the point $M\left(\frac{1}{n}, \frac{1}{n}, \ldots, \frac{1}{n} \right)$
$$L_{\min} = L\left(\frac{1}{n}, \frac{1}{n}, \ldots, \frac{1}{n} \right) = \frac{1}{n}.$$
It follows that $ L \geq \frac{1}{n}$.

If $\gamma(M) = m$, $(m<n)$, then we are looking for the extremum of a function of $n-m$ variables. In this case, $L_{\min}= \frac{1}{n-m}$, so $L \geq \frac{1}{n-m}$.

From the problem's construction, it follows that the iteration of the transformation $\divideontimes$ does not change the minimum value of the function $L^{t}\big(p_{1}^{t}, \ldots, p_{n}^{t} \big)$, i.e., $ L^{t} \geq \frac{1}{n-m}.$

Therefore, for any $t$, the following holds: \begin{equation*} \frac{1}{n-m} \leq L^{t} \leq 1.\tag*{\qed}\end{equation*}\renewcommand{\qed}{}
\end{proof}

\begin{proposition} \label{pr0}
The sequence $L^{t} = \|{\bf p}^{t}\|^{2}$, $t = 0, 1, \ldots$ is convergent, i.e., there exists
$$ L^{\infty} = \lim_{t \to \infty} L^{t}.$$
\end{proposition}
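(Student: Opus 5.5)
This follows directly from the monotone convergence theorem for real sequences. By Lemma~\ref{l1}, the sequence $\{L^t\}_{t=0}^\infty$ is non-increasing: $L^{t+1}\le L^t$ for every $t$. By Lemma~\ref{pr3(1)}, it is bounded below, $L^t \ge \frac{1}{n-m}$ with $m=\gamma(M)<n$; the bound $L^t\ge \frac1n>0$ also holds simply because ${\bf p}^t$ is stochastic. A non-increasing real sequence that is bounded below converges to its infimum, so
$$L^\infty=\lim_{t\to\infty}L^t=\inf_t L^t\in\Big[\tfrac{1}{n-m},\,L^0\Big].$$

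The only point needing care is that the lower bound is genuinely uniform in $t$. I would not lean on the Lagrange-multiplier argument for this. Instead I would use a direct Cauchy--Schwarz estimate. By Proposition~\ref{pr2}, exactly $n-m$ coordinates of ${\bf p}^t$ are nonzero, for every $t$. Since these coordinates sum to $1$,
$$1=\Big(\sum_{p_i\notin M}p_i^t\Big)^2\le (n-m)\sum_{i}(p_i^t)^2=(n-m)L^t,$$
which gives $L^t\ge\frac1{n-m}$ for all $t$.

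I expect no real obstacle here. For later use in Theorem~\ref{th1pop}, I would also record that $L^{t+1}-L^t\to0$. Combining this with the two non-positive terms in the expansion from the proof of Lemma~\ref{l1} should give $\sum_i (p_i^t)^2(L^t-p_i^t)^2\to0$.
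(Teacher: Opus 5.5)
Your proof is correct and is the paper's argument: the paper also concludes convergence from the monotone convergence theorem, combining the monotonicity of $L^t$ from Lemma~\ref{l1} with the bounds $\frac{1}{n-m}\le L^t\le 1$. Your Cauchy--Schwarz estimate is a cleaner, fully rigorous replacement for the paper's Lagrange-multiplier justification of the lower bound, although for convergence alone the trivial bound $L^t\ge 0$ already suffices.
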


\begin{proof}
It has been shown above that $\{L^{t}\}_{t=1}^{\infty}$ is a monotonic bounded sequence. Therefore, the sequence $\{L^{t}\}_{t=1}^{\infty}$ is convergent.
\end{proof}

\begin{figure}[h!]
\center{\includegraphics[scale=0.6]{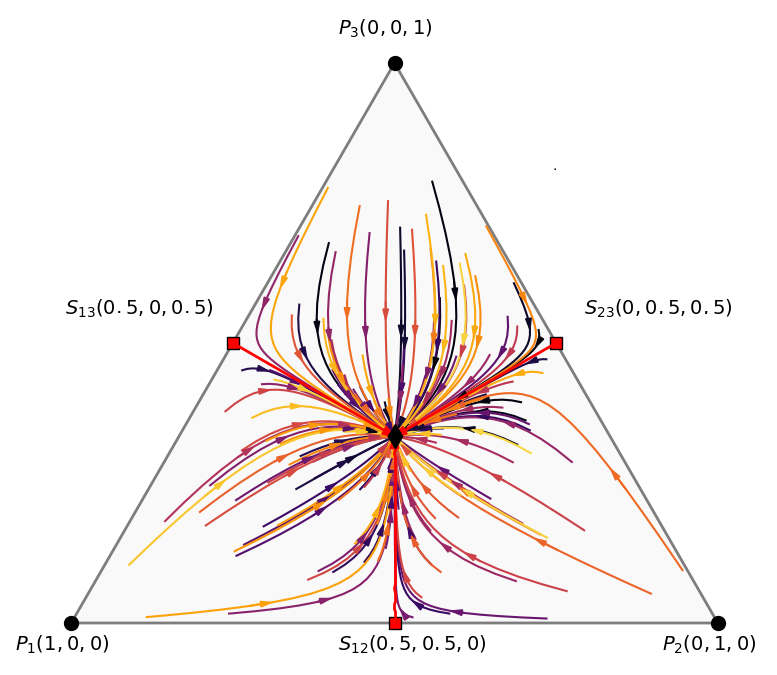}}
\caption{Phase portraits on the 2D invariant simplex within the 3D state space, illustrating the asymptotic trajectories of the stochastic vectors ${\bf p}^t$ under the action of conflict transformation~\eqref{eq:2}.
The trajectories asymptotically converge to the barycenter of the simplex $\left(\frac{1}{3}, \frac{1}{3}, \frac{1}{3} \right)$, which constitutes the interior attractor. The vertices of the simplex $P_1(1; 0; 0)$, $P_2(0; 1; 0)$, and $P_3(0; 0; 1)$ are unstable repelling fixed points.
Saddle points $S_{12}(0.5; 0.5; 0)$, $S_{23}(0; 0.5; 0.5)$, and $S_{13}(0.5; 0; 0.5)$ are located at the midpoints of the one-dimensional faces (edges) of the simplex. Their stable invariant manifolds coincide strictly with the boundary edges, capturing the phase flow from the repelling vertices. Concurrently, their unstable invariant manifolds extend into the interior of the phase space, defining the principal pathways of the flow toward the central attractor.
All other points within the simplex represent transient states.}
\label{fig-c1}
\end{figure}

\begin{proof}[Proof of Theorem \ref{th1pop}]
If some $i$-th coordinate~$p_{i}$ of the vector~${\bf p}$ is zero, then from Proposition \ref{pr2} it follows that $p_{i}^{t} = 0 = p_{i}^{\infty}.$

Consider a vector ${\bf p}$, all coordinates of which are strictly positive. Without loss of generality, we can assume that the coordinates of vector ${\bf p} $ are ordered in ascending order:
\begin{equation} \label{eq:4}
0 < p_{1} \leq p_{2} \leq \ldots \leq p_{n} < 1.
\end{equation}
From Proposition \ref{pr1} it follows that this ordering does not change, i.e., the order of coordinates does not depend on $t$.

Considering Proposition \ref{pr4}, all coordinates of the vector ${\bf p}$ at each discrete time moment $t$ are divided into three sets: those less than $ L^{t} $, those equal to $L^{t}$, and those greater than $L^{t}$. By Lemma \ref{l3}, the sequence of coordinates ${\lbrace p^{t}_{n} \rbrace}_{t = 0}^{\infty}$ is decreasing.
Let's show that the minimum coordinate~$ p^{t}_1$ always increases. Consider the difference
$$p^{t+1}_{1}-p^{t}_{1} = \frac{p_{1}^{t}\big(n-p_{1}^{t}\big)-p_{1}^{t}\big(n-L^{t}\big)}{n-L^{t}}=
\frac{p_{1}^{t}}{n-L^{t}}\big(L^{t}-p_{1}^{t}\big).$$
From Proposition \ref{pr4} it follows that $$p^{t+1}_{1}>p^{t}_{1}, \quad t = 0, 1, \ldots,$$
meaning the minimum coordinate increases.

Since the sequences ${\lbrace p_{1}^{t} \rbrace}_{t=0}^{\infty}$ and ${\lbrace p_{n}^{t} \rbrace}_{t=0}^{\infty}$ are monotonic and bounded, their limits exist:
$$ {p}_{\min}^{\infty} = {p}_{1}^{\infty} = \lim_{t\to \infty} {p}_{1}^{t}, \qquad {p}_{\max}^{\infty} ={p}_{n}^{\infty} = \lim_{t\to \infty}
{p}_{n}^{t}.$$

From equations \eqref{eq:3}, as $t \rightarrow \infty$, we get that $p_{1}^{\infty} = p_{n}^{\infty}= L^{\infty}$.

Considering condition \eqref{eq:4}, we have
\begin{equation}\label{eq:6}
p_{1}^{\infty} = p_{2}^{\infty} = \ldots = p_{j}^{\infty}= \ldots = p_{n-1}^{\infty}= p_{n}^{\infty}.
\end{equation}
Therefore, the limit ${\bf p}^{\infty} = \lim\limits_{t\to \infty} {\bf p}^t$ exists.
From the existence of the equilibrium vector ${\bf p}^{\infty} $ and equations \eqref{eq:2} it follows that $ {\bf r}^{\infty} = \lim\limits_{t \to \infty} {\bf r}^t$ exists.

Suppose $\gamma(M) = 0$. Considering \eqref{eq:6} and the stochasticity of the vectors, we have for
any $i = 1, \ldots, n$:
$$ p^{\infty}_{i} = r^{\infty}_{i} =\frac{1}{n} , $$
i.e., ${\bf p}^{\infty}={\bf r}^{\infty}.$

Let $0< \gamma(M) = m < n$, meaning $m$ coordinates of vector ${\bf p}$ are zero. Then the corresponding coordinates of vector~${\bf r}$ are $\frac{1}{n-1}.$ Consider the positive coordinates of vector~${\bf p}$. The number
of such coordinates is $n-m$.
From what has been proven above, it follows that
$$p^{\infty}_{i} = \frac{1}{n-m}.$$
Then the corresponding coordinates of vector ${\bf r}$ are
$$r^{\infty}_{i} = \frac{1-\frac{1}{n-m}}{n-1} = \frac{n-m-1}{(n-1)(n-m)}.$$

Let us recall that the dynamics are defined by the equation $p^{t+1}_i=p^{t}_i \cdot \frac {n-p^{t}_i}{n-L^{t}}$, where $L^t = \sum_{j=1}^n (p^t_j)^2$.

The calculation of the Jacobian matrix $J$ at the point ${\bf p}^* = \left(\frac{1}{n}, \ldots, \frac{1}{n}\right)$ yields a matrix with a very simple structure: all diagonal elements are identical ($J_{ii}=A$), and all off-diagonal elements are identical ($J_{ik}=B$). The Jacobian matrix $J$ has elements ${J_{ik} = \frac{\partial p^{t+1}_i}{\partial p^t_k}}$.

For $k=i$, we are looking for $\frac{\partial p^{t+1}_i}{\partial p^t_i}$. Let $p_i$ denote $p^t_i$ for convenience.
$$f_i(\mathbf{p}) = p_i \cdot \frac{n-p_i}{n-L}$$
where $L = \sum\limits_{j=1}^n p_j^2$.
Then
$$\frac{\partial f_i}{\partial p_i} = \frac{\partial p_i}{\partial p_i} \cdot \frac{n-p_i}{n-L} + p_i \cdot \frac{\partial}{\partial p_i} \left( \frac{n-p_i}{n-L} \right)$$

The first term is:
$$\frac{\partial p_i}{\partial p_i} \cdot \frac{n-p_i}{n-L} = 1 \cdot \frac{n-p_i}{n-L}$$

For the second term, we have
$$\frac{\partial}{\partial p_i} \left( \frac{n-p_i}{n-L} \right) = \frac{(-1)(n-L) - (n-p_i)(-2p_i)}{(n-L)^2} = \frac{-(n-L) + 2p_i(n-p_i)}{(n-L)^2}$$

Combining the terms, we get the expression for $A$:
$$A = \frac{n-p_i}{n-L} + p_i \cdot \frac{-(n-L) + 2p_i(n-p_i)}{(n-L)^2} = \frac{(n-p_i)(n-L) + p_i(-n+L+2np_i-2p_i^2)}{(n-L)^2}$$

Now we compute $A$ at the equilibrium point $\mathbf{p}^* = \left(\frac{1}{n}, \ldots, \frac{1}{n}\right)$.
At this point,
$p_i = \frac{1}{n}$.
$L = \sum\limits_{j=1}^n \left(\frac{1}{n}\right)^2 = n \cdot \left(\frac{1}{n^2}\right) = \frac{1}{n}$.
Then $A = \frac{n^3 - 2n + 2}{n(n^2-1)}.$

For $k \neq i$, we are looking for $\frac{\partial p^{t+1}_i}{\partial p^t_k}$
$$B = \frac{\partial f_i}{\partial p_k} = p_i(n-p_i) \frac{\partial}{\partial p_k} \left( \frac{1}{n-L} \right)$$
where
$$\frac{\partial}{\partial p_k} \left( \frac{1}{n-L} \right) = -\frac{-2p_k}{(n-L)^2} = \frac{2p_k}{(n-L)^2}$$

Now we combine for $B$:
$$B = p_i(n-p_i) \frac{2p_k}{(n-L)^2}$$

Let's compute $B$ at the equilibrium point ${\bf p}^* = \left(\frac{1}{n}, \ldots, \frac{1}{n}\right)$
\begin{align*} B &= \frac{1}{n} \left(n-\frac{1}{n}\right) \frac{2\left(\frac{1}{n}\right)}{\left(n-\frac{1}{n}\right)^2} = \frac{1}{n} \left(\frac{n^2-1}{n}\right) \frac{\frac{2}{n}}{\left(\frac{n^2-1}{n}\right)^2} \\ &= \frac{n^2-1}{n^2} \cdot \frac{2}{n} \cdot \frac{n^2}{(n^2-1)^2} = \frac{2(n^2-1)}{n(n^2-1)^2} = \frac{2}{n(n^2-1)}\end{align*}

Thus, the calculations show that:
$$A = \frac{n^3 - 2n + 2}{n(n^2-1)}, \quad B = \frac{2}{n(n^2-1)}$$

For matrices of such a structure, the eigenvalues are known: one value is $A+(n-1)B$, and the remaining $n-1$ values are $A-B$. We are interested in those $n-1$ eigenvalues that correspond to perturbations on the simplex. They are equal to
$$\lambda = A - B = \frac{n^3 - 2n + 2}{n(n^2-1)} - \frac{2}{n(n^2-1)} = \frac{n^3 - 2n}{n(n^2-1)} = \frac{n(n^2-2)}{n(n^2-1)} = \frac{n^2-2}{n^2-1}.$$
Since for $n>1$, $0 < n^2-2 < n^2-1$ holds, for this eigenvalue we have
$$0 < |\lambda| < 1$$
Since all $n-1$ relevant eigenvalues are less than unity in magnitude, the internal equilibrium point ${\bf p}^\infty = \left(\frac{1}{n}, \ldots, \frac{1}{n}\right)$ is locally asymptotically stable.

Consider the simplest boundary case where one coordinate is zero, for example, $p_n=0$, and the remaining $n-1$ coordinates are equal to $p_i^* = \frac{1}{n-1}$ for $i < n$.

For this point to be stable, it must be stable to perturbations that ``push'' it off the boundary. That is, if we add a small positive perturbation $\varepsilon$ to the zero coordinate $p_n$, it must decay rather than grow.

This stability is determined by the ``normal'' eigenvalue, which is equal to the diagonal element of the Jacobian $J_{nn}$, calculated at this boundary point.

Calculating the derivative $\frac{\partial F_n}{\partial p_n}$ at the point where $p_n=0$ and the other components are $\frac{1}{n-1}$, gives
$$\lambda_{\text{norm}} = J_{nn}({\bf p}^*) = \frac{n}{n - L^*} = \frac{n}{n - \sum_{j=1}^{n-1} \left(\frac{1}{n-1}\right)^2} = \frac{n}{n - (n-1)\frac{1}{(n-1)^2}}$$
$$  = \frac{n}{n - \frac{1}{n-1}} = \frac{n(n-1)}{n(n-1)-1} = \frac{n^2-n}{n^2-n-1}.$$

Since  $n^2-n > n^2-n-1$, it follows that $\lambda_{\text{norm}} > 1$.
An eigenvalue greater than unity means that any small perturbation of the zero component will grow exponentially, pushing the system away from this boundary. The boundary equilibrium point in this case is unstable.

Thus, stability analysis shows that only one equilibrium is stable: ${\bf p}^{\infty} ={\bf r}^{\infty} =\left(\frac{1}{n}, \frac{1}{n}, \ldots, \frac{1}{n}\right).$
\end{proof}
\begin{corollary}
For the dynamic conflict system \eqref{eq:1} with an initial stochastic vector ${{\bf p} \in{\mathbb R}_+^n}$, $(n>1)$, whose evolution is given by formulas \eqref{eq:3}, the set of all possible  vectors of equilibrium ${\bf p}^\infty \in \mathbb{R}_{+}^{n}$ has the following stability structure:
\begin{enumerate}
    \item The interior fixed point ${\bf p}^\infty = \left(\frac{1}{n}, \ldots, \frac{1}{n}\right)$ is a global attractor for all initial vectors ${{\bf p}^0= \{p_i^0\}_{i=1}^n}$, where $p_i^0 > 0$ for all $i$.
    \item The boundary of the simplex (the set of vectors where at least one coordinate is zero) is an invariant set. All fixed points belonging to this boundary are unstable in the full state space. Specifically, vertices act as unstable repellers, while fixed points located on the higher-dimensional faces act as topological saddles.
\end{enumerate}
\end{corollary}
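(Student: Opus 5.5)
Part 1 is obtained by combining convergence with uniqueness of the limit. By Theorem \ref{th1pop}, every trajectory converges to a fixed point. When $p_i^0>0$ for all $i$, we have $\gamma(M)=0$, so case 1) of the theorem gives ${\bf p}^\infty=(\frac1n,\ldots,\frac1n)$. This is exactly the global attraction statement on the open simplex. For completeness, I would also check that the open simplex is forward invariant. This follows from \eqref{eq:3}: $k_i^t=\frac{n-p_i^t}{n-L^t}>0$ whenever $p_i^t<1\le n$, so positive coordinates stay positive. Part 1 also needs the attractor to be Lyapunov stable. For that I would invoke the Jacobian computation in the proof of Theorem \ref{th1pop}, whose eigenvalues $\frac{n^2-2}{n^2-1}\in(0,1)$ on the tangent space give local asymptotic stability. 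Combined with convergence of every interior orbit, this makes it a global attractor.

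For part 2, invariance of the boundary is immediate from Proposition \ref{pr2}: a zero coordinate stays zero for all $t$. Next I would classify the boundary fixed points. A fixed point of \eqref{eq:3} must satisfy $p_i=0$ or $p_i=L$ for each $i$. Its support $S$ therefore has size $n-m$ with equal entries $\frac1{n-m}$, and $L^*=\frac1{n-m}$. These are exactly the limits listed in case 2) of Theorem \ref{th1pop}, with $1\le m\le n-1$.

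Instability comes from the normal eigenvalues. I would generalise the $\lambda_{\text{norm}}$ computation in the proof of the theorem to a general number of zeros. For each $j\notin S$, the diagonal entry is $J_{jj}=\frac{n-0}{n-L^*}=\frac{n}{n-\frac1{n-m}}>1$. Since $p_j=0$ makes the row $J_{jk}$, $k\ne j$, vanish, the Jacobian is block triangular. So these $m$ values are genuine eigenvalues, and each exceeds $1$, which gives instability. For the tangential directions within the face spanned by $S$, the fixed point is the barycenter of an $(n-m-1)$-dimensional simplex. The same computation as in the theorem applies, with $n$ replaced by $n-m$ inside the face and $n$ kept as the constant in \eqref{eq:3}. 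One can read off tangential eigenvalues strictly inside $(0,1)$; I would redo the $A-B$ calculation with $p_i=L=\frac1{n-m}$ to confirm this. Separately, Theorem \ref{th1pop} case 2) applied inside the face shows that orbits in the relative interior of the face converge to that point, which gives attraction within the face.

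The classification then splits by the dimension of the face. For a vertex ($m=n-1$) there are no tangential directions, and all $n-1$ relevant eigenvalues are $\frac{n}{n-1}>1$, so the vertex is a repeller. For $1\le m\le n-2$, the point has a nontrivial stable manifold, namely the relative interior of its face, and a nontrivial unstable manifold transverse to it. This makes it a saddle. The main obstacle I expect is making ``topological saddle'' rigorous. Hyperbolicity requires no eigenvalue of modulus $1$, which should follow from the explicit formulas above. One then appeals to the stable manifold theorem, or alternatively gives a direct description: orbits in the face converge to the point, while orbits with any positive coordinate outside $S$ escape by part 1.
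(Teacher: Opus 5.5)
Your proposal is correct and takes the paper's route: Theorem~\ref{th1pop} for convergence, Proposition~\ref{pr2} for invariance, and eigenvalues of the Jacobian of \eqref{eq:3}. It goes further than the paper, which computes only the normal eigenvalue for a single zero coordinate. Your general-$m$ claims check out: at a boundary fixed point with support $S$ and $L^*=\frac{1}{n-m}$, the Jacobian restricted to $\{\sum_i v_i=0\}$ has eigenvalue $\frac{n-2L^*}{n-L^*}\in(0,1)$ along the face and $\frac{n}{n-L^*}>1$ in the $m$ transverse directions, so the saddle/repeller classification holds.

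One step needs tightening. The eigenvalue of the direction transverse to the simplex itself (the common column sum of $J$ on the simplex) also equals $\frac{n}{n-L^*}$. So block-triangularity alone does not show that the normal eigenvalues survive restriction to the simplex. You can fix this by noting that ${\bf e}_j-{\bf p}^*$, $j\notin S$, are eigenvectors for $\frac{n}{n-L^*}$ with zero coordinate sum. For instability alone, your escape-by-part-1 argument also suffices.
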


The stability analysis of fixed points for nonlinear maps, as performed here via linearization and eigenvalue analysis of the Jacobian, is a cornerstone of dynamical systems theory \cite{Strogatz2018, El2005}. The result that boundary equilibria are unstable repellers while the interior equilibrium acts as an attractor is a common feature in systems modeling population dynamics and competition \cite{28}.

In addition to establishing stability, the analysis of the Jacobian's eigenvalues provides an important insight into the rate of convergence to the equilibrium. As shown, all $n-1$ eigenvalues corresponding to perturbations on the simplex are equal to $\lambda = \frac{n^2-2}{n^2-1}$. This value can be expressed as $\lambda = 1 - \frac{1}{n^2-1}$, from which it is evident that as the number of system components $n$ increases, $\lambda$ approaches 1 from below. Since the decay rate of perturbations near the fixed point is determined by the magnitude $|\lambda|^t$, the proximity of $\lambda$ to unity signifies that the convergence is very slow. Consequently, for systems with a large number of interacting components (a large $n$), the process of reaching a uniform distribution can be extremely prolonged. In practice, this means the system can spend long periods in transient, non-uniform states before reaching its asymptotic equilibrium.

\section{Dependence of the equilibrium on the set of constants.}\label{subsection342}
This heterogeneity in the parameters $c_i$ is conceptually similar to the introduction of influential agents or ``zealots'' in opinion dynamics, who can prevent a full consensus~\cite{Mobilia2007}. Whereas models of bounded confidence achieve fragmentation through interaction thresholds \cite{Hegselmann2002}, our model generates aggregated states analytically, based on the intrinsic ``attractiveness'' $c_i$ of each component.

Next, let's consider the case where all parameters $c_i$ are different. Mathematically, the equilibrium is entirely determined by this set of parameters~$c_i$. For instance, a violation of condition~\eqref{ccc} for some index~$i$ in Theorem~\ref{thpop2} leads to the corresponding coordinate vanishing, i.e., $p_i^\infty = 0$. This behavior has been confirmed by numerous computer simulations (see Fig.~\ref{ris:3pop_a},~\ref{ris:3pop_b}). Furthermore, the explicit form of the vectors of equilibrium demonstrates that a greater value of the constant $c_i$ strictly correlates with a greater value of the corresponding $i$-th coordinate.

In terms of the applied model, this mathematical dependence illustrates how external parameters (interpreted as assistance or favorability) govern the resource distribution \cite{Karataieva2025}. Specifically, more favorable conditions associated with a certain state~$\omega_i$ lead to an increase in the corresponding component's share in that state. Conversely, the condition $p_i^\infty = 0$ is interpreted as the complete depletion of the resource from the $i$-th state, resulting in the component's displacement from the system.

Considering that ${r^t_i = \frac{1-p_i^t}{n-1}}$, equation \eqref{eq:3.1} can be written as
\begin{equation}\label{eq:3.2}
p^{t+1}_i= p_{i}^{t} \cdot \frac{n-1 + c_{i}(1-p_{i}^{t})}{n-1+L_{c}^{t}} = p_{i}^{t} \cdot k_{i,c}^{t}, \quad t \geq 1,
\end{equation}
where
$$k_{i,c}^{t} = \frac{n-1 + c_{i}(1-p_{i}^{t})}{n-1+L_{c}^{t}}, \quad L_c^t = \sum_{i=1}^{n}c_{i}p^t_{i}(1-p^t_{i}).$$
Hereafter, we will use the notation $\kappa_{i}^{t} := c_i(1-p_{i}^{t})$.
\begin{proposition}
Let for some $i$, $j$ $(i \neq j)$ the parameter $c_i$ be equal to parameter $c_j$, i.e.,
$c_i=c_j=c$, and $p^t_i<p^t_j$.
Then
$$\kappa^t_i > \kappa^t_j \Leftrightarrow \kappa^{t+1}_i > \kappa^{t+1}_j.$$
\end{proposition}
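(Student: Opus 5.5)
The plan is to handle the two directions of the equivalence separately, after rewriting both sides in terms of the coordinates of ${\bf p}$. Since $c_i=c_j=c>0$ and $\kappa^t_k=c(1-p^t_k)$, we have
$$\kappa^t_i-\kappa^t_j=c\,(p^t_j-p^t_i),\qquad \kappa^{t+1}_i-\kappa^{t+1}_j=c\,(p^{t+1}_j-p^{t+1}_i).$$
So the left-hand side $\kappa^t_i>\kappa^t_j$ is equivalent to $p^t_i<p^t_j$, and the right-hand side $\kappa^{t+1}_i>\kappa^{t+1}_j$ is equivalent to $p^{t+1}_i<p^{t+1}_j$.

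\emph{Direction ($\Leftarrow$).} Under the standing hypothesis $p^t_i<p^t_j$ of the statement, the left-hand side is true outright. The implication from right to left therefore holds for logical reasons and needs no dynamics; I will still record this explicitly so that both directions are visibly accounted for.

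\emph{Direction ($\Rightarrow$).} This is the substantive part: I must show that $p^t_i<p^t_j$ forces $p^{t+1}_i<p^{t+1}_j$ under the map \eqref{eq:3.2}. Following the computation in Proposition~\ref{pr1}, but for \eqref{eq:3.2} with a common constant $c$, I will write
$$p^{t+1}_j-p^{t+1}_i=\frac{p^t_j\big(n-1+c(1-p^t_j)\big)-p^t_i\big(n-1+c(1-p^t_i)\big)}{n-1+L^t_c}.$$
Factoring out $(p^t_j-p^t_i)$, as in Proposition~\ref{pr1}, gives
$$p^{t+1}_j-p^{t+1}_i=(p^t_j-p^t_i)\,\frac{n-1+c\big(1-p^t_i-p^t_j\big)}{n-1+L^t_c}.$$
The sign analysis then runs as follows. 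The denominator is positive, because $L^t_c=\sum_k c_kp^t_k(1-p^t_k)\ge 0$. The numerator is at least $n-1>0$, because $p^t_i+p^t_j\le\sum_k p^t_k=1$ by stochasticity, so $c(1-p^t_i-p^t_j)\ge 0$. Hence $p^{t+1}_j-p^{t+1}_i$ has the same sign as $p^t_j-p^t_i$, which is strictly positive. This gives $\kappa^{t+1}_i>\kappa^{t+1}_j$.

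I do not expect a genuine obstacle. The one point requiring care is to keep the numerator factor in its exact form, using $p^t_i+p^t_j\le 1$ rather than a cruder bound, so that strict positivity follows without any restriction on $c\in(0,1]$. I will conclude by noting that the same factorization shows the reverse fact as well: the sign of $\kappa^{t+1}_i-\kappa^{t+1}_j$ always equals the sign of $\kappa^t_i-\kappa^t_j$. This is what makes the two inequalities in the statement equivalent, and not merely jointly true under the hypothesis.
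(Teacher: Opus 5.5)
Your proposal is correct and follows the same route as the paper. Both reduce the $\kappa$-inequalities to $p$-inequalities via $\kappa^t_i-\kappa^t_j=c(p^t_j-p^t_i)$ and then use the factorization $p^{t+1}_j-p^{t+1}_i=(p^t_j-p^t_i)\,\frac{n-1+c(1-p^t_i-p^t_j)}{n-1+L^t_c}$; your explicit sign check, using $L^t_c\ge 0$ and $p^t_i+p^t_j\le 1$, supplies the justification for positivity that the paper only asserts.
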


\begin{proof}
The inequalities $\kappa^t_i > \kappa^t_j$ and $p^t_i<p^t_j$ are equivalent, since
$$\kappa^{t}_{i} - \kappa^t_j = c(1-p_i^t) - c(1-p_j^t) = c(p^t_j-p^t_i) >0.$$
Therefore, it is sufficient to show that $p^t_i<p^t_j$ implies $p^{t+1}_i<p^{t+1}_j$. Consider
\begin{gather*} \begin{split}
p^{t+1}_j - p^{t+1}_i &= \frac{1}{n-1 + L_c}\big(p^t_j(n-1+c(1-p^t_j))- p^t_i(n-1+c(1-p^t_i)) \big) \\
&= \frac{1}{n-1 + L_c}\big((p^t_j-p^t_i)(n-1+c)- c(p^t_j)^2 + c(p^t_i)^2) \big) \\
&= \frac{1}{n-1 + L_c}\big((p^t_j-p^t_i)(n-1+c)+ c(p^t_i - p^t_j)(p^t_j + p^t_i)\big) \\
&= \frac{p^t_j-p^t_i}{n-1 + L_c}\big(n-1+c-c(p^t_j + p^t_i)\big) \\
&=\frac{p^t_j-p^t_i}{n-1 + L_c}\big(n-1+c(1-p^t_j - p^t_i)\big)>0,
\end{split}\end{gather*}
thus $p^{t+1}_i < p^{t+1}_j$.

Hence, if $\kappa^t_i > \kappa^t_j$, then $\kappa^{t+1}_i > \kappa^{t+1}_j$.
\end{proof}

Let $V_n$ be the set of vertices of the standard $(n-1)$-dimensional simplex in $\mathbb R^n$. This set consists of the $n$ standard basis vectors:
$$V_n = \{{\bf e}_1, {\bf e}_2, \ldots, {\bf e}_n\},$$
where ${\bf e}_i = (0, \ldots, 1, \ldots, 0)$ is the vector with a 1 in the $i$-th position and zeros elsewhere.

\begin{proposition}
  Every vertex ${\bf p} \in V_n$ is a fixed point of the dynamical system
\end{proposition}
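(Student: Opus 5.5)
The plan is to verify directly that ${\bf p}={\bf e}_k$ satisfies ${\bf p}^{1}={\bf p}$ under the general rule \eqref{eq:3.1}, in its rewritten form \eqref{eq:3.2}. This needs only the explicit formula and no earlier lemma. Fix $k$ and set ${\bf p}^t={\bf e}_k$, so that $p_k^t=1$ and $p_j^t=0$ for $j\neq k$.

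First, compute the normalizing quantity. Every summand of
\[
L_c^t=\sum_{i=1}^n c_i p_i^t(1-p_i^t)
\]
vanishes: for $j\neq k$ because $p_j^t=0$, and for $i=k$ because $1-p_k^t=0$. Hence $L_c^t=0$, and the denominator in \eqref{eq:3.2} equals $n-1>0$. It is therefore well defined, and the same holds for $z^t=1+L_c^t/(n-1)=1$ in \eqref{eq:3.1}.

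Next, compute each coordinate. For $j\neq k$ we get $p_j^{t+1}=0\cdot k_{j,c}^t=0$, which is consistent with the invariance of zero coordinates (the analogue of Proposition~\ref{pr2} for general $c_i$). For $i=k$ we get $\kappa_k^t=c_k(1-1)=0$, hence
\[
k_{k,c}^t=\frac{n-1+0}{n-1+0}=1 \quad\text{and}\quad p_k^{t+1}=1.
\]
Thus ${\bf p}^{t+1}={\bf e}_k$. By induction ${\bf p}^t={\bf e}_k$ for all $t$, so each vertex is a fixed point for any admissible $0<c_i\le 1$. Equivalently, in the original coordinates ${\bf r}={\bf e}$-complement, with $r_k=0$ and $r_j=\frac{1}{n-1}$ for $j\neq k$, and ${\bf p}^{t+1}={\bf p}^t$.

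There is no real obstacle here. The only point to check is that the denominator does not vanish, and it stays positive (equal to $n-1$). It is also worth remarking that stability is not claimed. By the analogue of the normal-eigenvalue computation from the proof of Theorem~\ref{th1pop}, perturbing a zero coordinate $j$ gives the multiplier $(n-1+c_j)/(n-1)>1$, which indicates that the vertices are repelling.
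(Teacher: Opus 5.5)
Your proof is correct and is essentially the paper's argument: zero coordinates are preserved because $p_j^{t+1}=p_j^t\, k_{j,c}^t$, and induction gives ${\bf p}^t={\bf e}_k$ for all $t$. The only minor difference is how the $k$-th coordinate is handled. You compute $p_k^{t+1}=1$ directly from $L_c^t=0$ and $k_{k,c}^t=1$, which also confirms that the denominator $n-1+L_c^t=n-1$ is nonzero, whereas the paper recovers $p_k^t=1$ from the normalization $\sum_{m} p_m^t=1$.
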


\begin{proof}
  Let the initial state of the system be ${\bf p}^0 = {\bf e}_i$ for some $i \in \{1, \ldots, n\}$. This means the initial components of the system are $p_i^0 = 1$ and $p_j^0 = 0$ for all $j \neq i$.

Consider the evolution of an arbitrary component $p_j^t$ according to the dynamical equation ${\bf p}_j^{t+1} = {\bf p}_j^{t} \cdot k_{j,c}^{t}$.
 For any index $j \neq i$, the initial component is $p_j^0 = 0$. Therefore, in the first step, $p_j^1 = 0 \cdot k_{j,c}^{0} = 0$. By mathematical induction, it is evident that $p_j^t = 0$ for all $t \ge 0$.

 Since the sum of the components of the state vector must equal one for all time $t$ ($\sum_{k=1}^{n} p_k^t = 1$), and we have proven that all components $p_j^t$ for $j \neq i$ are zero, the component $p_i^t$ must be equal to 1 for all $t \ge 0$.

Thus, if ${\bf p}^0 = {\bf e}_i$, then ${\bf p}^t = {\bf e}_i$ for all $t \ge 0$, which proves that every vertex is a fixed point.
\end{proof}

\subsection{Existence of a fixed point}
\begin{theorem}\label{thpop1}
Let ${\bf p}^0 \in \mathbb R_+^{2}$, ${\bf p}^0 \neq {\bf e}_i$. Then each trajectory of the dynamic conflict system \eqref{eq:1} converges to a fixed point
$$ {\bf p}^{\infty} = \lim_{t\to \infty} {\bf p}^t,$$
and
\begin{equation}\label{eq1r}
{\bf p}^{\infty} = \left(\frac{c_1}{c_1+c_2}, \frac{c_2}{c_1+c_2}\right).
\end{equation}
\end{theorem}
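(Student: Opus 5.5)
Since ${\bf p}^0\neq{\bf e}_i$ and ${\bf p}^0$ is a stochastic vector in $\mathbb R_+^2$, both coordinates are strictly positive: $p^0_1=x\in(0,1)$ and $p^0_2=1-x$. By Proposition~\ref{pr2} applied in reverse (a coordinate becomes zero only if it starts at zero), the orbit stays in the open segment. With $n=2$ we have $r_1=1-p_1=p_2$ and $r_2=p_1$. Equation~\eqref{eq:3.1} therefore reduces to a one-dimensional map of $x=p_1^t$:
\begin{equation*}
x^{t+1}=F(x^t),\qquad F(x)=\frac{x\,(1+c_1(1-x))}{1+(c_1+c_2)\,x(1-x)},
\end{equation*}
because $z=x(1+c_1(1-x))+(1-x)(1+c_2x)=1+(c_1+c_2)x(1-x)$. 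The plan is to analyse this scalar map on $(0,1)$.

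\emph{Step 1 (fixed points).} I will compute
\begin{equation*}
F(x)-x=\frac{x(1-x)\bigl(c_1-(c_1+c_2)x\bigr)}{1+(c_1+c_2)x(1-x)}.
\end{equation*}
This shows that the fixed points in $[0,1]$ are exactly $0$, $1$ and $x^*=\frac{c_1}{c_1+c_2}\in(0,1)$. It also gives the sign information: $F(x)>x$ for $0<x<x^*$ and $F(x)<x$ for $x^*<x<1$.

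\emph{Step 2 (no overshoot).} I will show that $F$ is strictly increasing on $[0,1]$. Then $0<x<x^*$ implies $x<F(x)<F(x^*)=x^*$, and symmetrically for $x>x^*$. Together with Step 1, this makes $\{x^t\}$ monotone and bounded: it increases toward $x^*$ from below or decreases toward it from above. The limit is then a fixed point of the continuous map $F$. It lies in $(0,1)$, since the sequence moves away from $0$ and $1$, so it must equal $x^*$. This yields \eqref{eq1r}, with $p_2^\infty=1-x^*=\frac{c_2}{c_1+c_2}$.

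\emph{Main obstacle: monotonicity of $F$.} Writing $F=N/D$, I need $N'D-ND'>0$ on $(0,1)$, where
\[
N=x+c_1x-c_1x^2,\qquad D=1+sx-sx^2,\qquad s=c_1+c_2\le 2.
\]
Expanding gives
\begin{equation*}
N'D-ND'=1+c_1-2c_1x+s(c_1-1)x^2,
\end{equation*}
since the cubic terms cancel; this should be checked carefully. The quadratic $g(x)=1+c_1-2c_1x+s(c_1-1)x^2$ satisfies $g(0)=1+c_1>0$ and $g(1)=1-c_1+s(c_1-1)=(1-c_1)(1-s)+\ldots$. To avoid sign bookkeeping I will instead bound it directly. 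For $c_1\le1$,
\[
s(c_1-1)x^2\ge -2(1-c_1)x^2\ge -2(1-c_1)x,
\]
hence $g(x)\ge 1+c_1-2c_1x-2(1-c_1)x=1+c_1-2x$. This is positive for $x<\tfrac{1+c_1}{2}$ but not on the whole interval, so a cleaner route may be needed.

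As a fallback, I will avoid global monotonicity of $F$ and prove the no-overshoot property directly. For $x<x^*$, I will show that $F(x)-x^*$ has the sign of $x-x^*$. This amounts to factoring the numerator of $F(x)-x^*$ as $(x-x^*)$ times a positive expression, using $0<c_i\le1$. If neither argument closes, I will instead use the Lyapunov-type quantity $|x^t-x^*|$. Alternatively I can take the ratio $\frac{p_1}{p_2}$, whose update factor $\frac{1+c_1p_2}{1+c_2p_1}$ exceeds $1$ exactly when $\frac{p_1}{p_2}<\frac{c_1}{c_2}$. I will then show that this ratio approaches $\frac{c_1}{c_2}$ monotonically without crossing it. Non-crossing would follow from $\frac{p_1}{p_2}\cdot\frac{1+c_1p_2}{1+c_2p_1}$ being increasing in $\frac{p_1}{p_2}$, which is the same derivative check in a different parametrization and may be easier to settle.
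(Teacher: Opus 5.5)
Your outline is the paper's argument written in the variable $x=p_1$. Step~1 is exactly the paper's identity $p_1^1-p_1=\frac{p_1p_2}{1+L_c}(c_1p_2-c_2p_1)$. After it, both you and the paper use monotone boundedness and identify the limit through $c_1(1-p_1^\infty)=c_2(1-p_2^\infty)=L_c^\infty$.

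The step you single out, that the orbit never crosses $x^*$, is where the paper is thin. It writes $\frac{c_1}{c_2}<\frac{p_1}{p_2}<\frac{p_1^1}{p_2^1}$, but $p_1^1<p_1$ and $p_2^1>p_2$ give $\frac{p_1^1}{p_2^1}<\frac{p_1}{p_2}$. So non-crossing is asserted rather than proved, and you are right to insist on it. Your remark that the limit stays in $(0,1)$ also justifies setting $k_{1,c}^\infty=k_{2,c}^\infty=1$, which the paper does without comment.

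The real defect in your proposal is the expansion of $N'D-ND'$; it is what made the main route look as if it fails. The $x^2$ terms are $-(1+c_1)s-2c_1s$ from $N'D$ and $+2(1+c_1)s+c_1s$ from $-ND'$, which sum to $s$. Hence
\[
N'D-ND'=1+c_1-2c_1x+(c_1+c_2)x^2=1+c_1(1-x)^2+c_2x^2\ge 1
\]
for every real $x$, with no appeal to $c_i\le 1$ or $s\le 2$. As a check, at $x^*$ this must equal $D(x^*)=1+\frac{c_1c_2}{c_1+c_2}$, consistent with $F'(x^*)=1/D(x^*)$; your quadratic fails this test.

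Your first fallback is even shorter. Because $(c_1+c_2)x^*=c_1$, one has $N(x)-x^*D(x)=x-x^*$ identically, that is,
\[
F(x)-x^*=\frac{x-x^*}{1+(c_1+c_2)x(1-x)}.
\]
This gives three things at once:
\begin{itemize}
\item $F(x)$ lies strictly between $x$ and $x^*$;
\item $|x^t-x^*|$ strictly decreases;
\item convergence is geometric, since the orbit stays in the compact interval between $x^0$ and $x^*$, where the denominator is at least some $1+\delta>1$.
\end{itemize}
With either correction your proof is complete, and it fills the gap left in the paper's version.
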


\begin{proof}
Let ${\bf p} = (p_1, p_2)$ and ${\bf c}=(c_1, c_2)$. Obviously, ${{\bf p} = (1;0) = {\bf p}^{\infty}}$, and similarly ${\bf p} = (0;1) = {\bf p}^{\infty}$.

Consider $p_i>0$, $i =1, 2$.
Assume $\frac{c_1}{c_2} < \frac{p_1}{p_2}$. Then
\begin{gather*} \begin{split}
p^1_1 -p_1 &= \frac{p_1}{1+L_c}\big( c_1(1-p_1)-c_1p_1(1-p_1)-c_2p_2(1-p_2) \big) \\
&= \frac{p_1}{1+L_c}\big( c_1(1-p_1)^2-c_2p_2(1-p_2) \big) \\
&= \frac{p_1}{1+L_c}\big( c_1p_2^2-c_2p_2p_1 \big) = \frac{p_1p_2}{1+L_c}\big( c_1p_2-c_2p_1 \big) <0,
\end{split}\end{gather*}
thus $p^1_1 < p_1$. Then $p_2^1 = 1-p^1_1 > 1-p_1 = p_2.$ From this it follows that $\frac{c_1}{c_2} < \frac{p_1}{p_2}< \frac{p^1_1}{p^1_2}$.
Thus, we get
$\frac{c_1}{c_2} < \frac{p_1^t}{p_2^t} \quad \forall \ t.$

Analogously for the case $\frac{c_1}{c_2} > \frac{p_1}{p_2}$.

Hence, $\{p^t_i\}_{t=0}^{\infty}$ is a monotonic bounded sequence, meaning the limit exists
$$ {\bf p}^{\infty} = \lim_{t\to \infty} {\bf p}^t.$$
Then from the invariance of the equilibrium, we have the system
\begin{equation*}
\begin{cases}
k_{1,c}^{\infty}=1,\\
k_{2,c}^{\infty}=1,
\end{cases}
\Longleftrightarrow
\begin{cases}
c_1(1-p^{\infty}_1)=L_c^{\infty},\\
c_2(1-p^{\infty}_2)=L_c^{\infty},
\end{cases}
\Longleftrightarrow
\begin{cases}
p^{\infty}_1=\dfrac{c_1}{c_1+c_2},\\
p^{\infty}_2=\dfrac{c_2}{c_1+c_2},
\end{cases}
\end{equation*}
which proves equality \eqref{eq1r}.
\end{proof}

Next, the case of a three-dimensional space is considered, i.e., ${\bf p} \in \mathbb R_+^{3}$.
Note that Theorem \ref{thpop1}, as well as the following Theorem \ref{thpop2}, can be generalized to the case ${\bf p} \in \mathbb R_+^{n}$, $n>3$. In this case, condition \eqref{ccc} becomes significantly more complex, and the description of the vector ${\bf p}^\infty$ breaks down into several non-trivial cases that require additional analysis.

\begin{figure}[h!]
\renewcommand{\thefigure}{2a}
\centering
\includegraphics[width=0.7\linewidth]{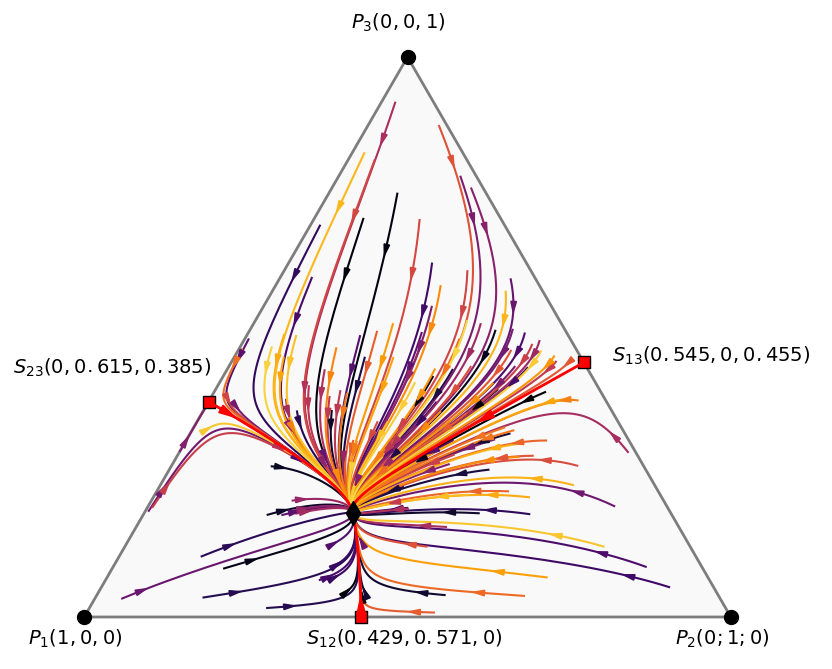}
\caption{Phase portrait on the 2D invariant simplex within the 3D state space, illustrating the asymptotic trajectories of the stochastic vectors ${\bf p}^t$ under the action of conflict transformation~\eqref{eq:3.2}. Case (a) with asymmetric interaction constants ${\bf c} = (0.3; 0.4; 0.25)$. The trajectories asymptotically converge to the interior attractor ${\bf p}^\infty = (0.322; 0.4915; 0.1865)$. The boundary fixed points act as saddles: $S_{12}(0.429; 0.571; 0)$, $S_{23}(0; 0.615; 0.385)$, and $S_{13}(0.545; 0; 0.455)$, with their unstable invariant manifolds defining the principal pathways of the flow toward the interior attractor. The vertices $P_1(1; 0; 0)$, $P_2(0; 1; 0)$, and $P_3(0; 0; 1)$ are unstable repelling fixed points, and all other points within the simplex represent transient states.}
\label{ris:3pop_a}
\end{figure}

\begin{theorem}\label{thpop2}
Let ${\bf p}^0 \in \mathbb R_+^{3}$, ${\bf p}^0 \neq {\bf e}_i$, and for each $i=1, \ldots, 3$ one of the conditions $\kappa^t_i > L_c^t$ or $\kappa^t_i < L_c^t$ holds for any $t$. Then every trajectory of the dynamic conflict system \eqref{eq:1} converges to a fixed point
$$ {\bf p}^{\infty} = \lim_{t\to \infty} {\bf p}^t.$$
Furthermore,
$${{\bf p}^{\infty}} = \bigg(\frac{c_1c_2+c_1c_3-c_2c_3}{c_1c_2+c_1c_3+c_2c_3}, \frac{c_1c_2-c_1c_3+c_2c_3}{c_1c_2+c_1c_3+c_2c_3}, \frac{c_1c_3-c_1c_2+c_2c_3}{c_1c_2+c_1c_3+c_2c_3}\bigg),$$
if for all $i, j, k = 1, 2, 3$, $i \neq j \neq k$
\begin{equation} \label{ccc}
c_i > \frac{c_j c_k}{c_j +c_k}.
\end{equation}
\end{theorem}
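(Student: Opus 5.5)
The plan follows the two-dimensional case (Theorem~\ref{thpop1}): first monotonicity of each coordinate, then identification of the limit from the fixed-point equations. From \eqref{eq:3.2} we have
$$p_i^{t+1}-p_i^t=\frac{p_i^t\,(\kappa_i^t-L_c^t)}{2+L_c^t}.$$
So the sign of the increment of $p_i^t$ equals the sign of $\kappa_i^t-L_c^t$. By hypothesis, for each $i$ this sign does not change along the trajectory. Hence every coordinate sequence $\{p_i^t\}_{t\ge0}$ is monotone, either nonincreasing or nondecreasing. Since $0\le p_i^t\le 1$, each coordinate has a limit, and so ${\bf p}^\infty=\lim_{t\to\infty}{\bf p}^t$ exists and lies in the simplex. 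The map \eqref{eq:3.2} is continuous on the simplex, because the denominator $2+L_c^t\ge 2$. Passing to the limit therefore shows that ${\bf p}^\infty$ is a fixed point: for every $i$, either $p_i^\infty=0$ or $\kappa_i^\infty=L_c^\infty$.

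Next I identify the limit. Suppose all $p_i^\infty>0$. Then $c_i(1-p_i^\infty)=\lambda$ for a common value $\lambda=L_c^\infty$, so $p_i^\infty=1-\lambda/c_i$. Summing over $i$ and using $\sum_i p_i^\infty=1$ gives
$$\lambda=\frac{2}{1/c_1+1/c_2+1/c_3}=\frac{2c_1c_2c_3}{c_1c_2+c_1c_3+c_2c_3}.$$
Substituting back yields
$$p_1^\infty=\frac{c_1c_2+c_1c_3-c_2c_3}{c_1c_2+c_1c_3+c_2c_3},$$
and cyclically for $p_2^\infty,p_3^\infty$, which is the claimed formula. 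One should also check consistency, namely that $L_c^\infty=\sum_i c_ip_i^\infty(1-p_i^\infty)=\lambda\sum_i p_i^\infty=\lambda$. Condition \eqref{ccc}, $c_i>c_jc_k/(c_j+c_k)$, is exactly equivalent to $c_1c_2+c_1c_3-c_2c_3>0$ (and cyclically). So under \eqref{ccc} this interior fixed point exists and is unique, and it is the only candidate with all coordinates positive.

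The main obstacle is to exclude the boundary limits: a vertex ${\bf e}_i$, or an edge fixed point with one vanishing coordinate. By the edge analogue of Theorem~\ref{thpop1}, such a point has the form $(c_j/(c_j+c_k),\,c_k/(c_j+c_k))$ on the face $p_i=0$. I would use the monotonicity hypothesis. If $p_i^\infty=0$ while $p_i^0>0$, then $p_i^t$ must decrease, so $\kappa_i^t<L_c^t$ for all $t$, and in the limit $c_i\le L_c^\infty$. At the edge point,
$$L_c^\infty=c_j\cdot\frac{c_j}{c_j+c_k}\cdot\frac{c_k}{c_j+c_k}+c_k\cdot\frac{c_k}{c_j+c_k}\cdot\frac{c_j}{c_j+c_k}=\frac{c_jc_k}{c_j+c_k}.$$
This gives $c_i\le c_jc_k/(c_j+c_k)$, contradicting \eqref{ccc}.

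At a vertex ${\bf e}_j$ we have $L_c^\infty=0$, while $\kappa_i^\infty=c_i>0$ for $i\neq j$. Thus eventually $\kappa_i^t>L_c^t$, so those coordinates increase and cannot tend to $0$ unless they were initially $0$. Initial zeros are handled separately, since zero coordinates stay zero by \eqref{eq:3.2}. In that case the problem reduces to Theorem~\ref{thpop1} on an edge (or to a trivial case), and the theorem's formula is understood for interior initial data.

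With the boundary limits excluded, ${\bf p}^\infty$ is the interior fixed point computed above. The care needed is in passing from strict inequalities along the trajectory to non-strict inequalities in the limit, and in confirming that the edge equilibrium formula is the only possible boundary limit with two positive coordinates.
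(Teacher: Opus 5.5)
Your argument is correct, and its skeleton matches the paper's. In both, each $p_i^t$ is monotone because $\kappa_i^t-L_c^t$ has constant sign, the limit therefore exists, and solving $c_i(1-p_i^\infty)=L_c^\infty$ gives the formula, with \eqref{ccc} equivalent to positivity of the numerators. The difference is that the paper simply writes ``assume all coordinates $p_i^\infty$ are non-zero'' and never justifies it. Under \eqref{ccc} it therefore does not exclude convergence to an edge equilibrium or a vertex; it only observes that the interior formula is admissible exactly when \eqref{ccc} holds.

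Your boundary-exclusion step closes this gap. The sign hypothesis forces a coordinate with $p_i^0>0$ and $p_i^\infty=0$ to be decreasing, hence $c_i\le L_c^\infty$. On the edge $L_c^\infty=c_jc_k/(c_j+c_k)$, which contradicts \eqref{ccc}. At a vertex $L_c^\infty=0$, and the same inequality is already impossible since $c_i>0$, so your separate continuity argument there is not needed. Your version thus proves the statement, whereas the paper's proof only identifies the candidate limit.

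Your restriction to initial data with all coordinates positive is necessary, not just convenient; the paper leaves it implicit. Take $c_1=c_2=c_3=1$ and ${\bf p}^0=(0.4,0.6,0)$. The strict order $p_1^t<p_2^t$ is preserved (see the proof of Proposition~\ref{pr1}), which gives $\kappa_1^t>L_c^t$, $\kappa_2^t<L_c^t$ and $\kappa_3^t=1>2p_1^tp_2^t=L_c^t$ for all $t$. So every hypothesis of the statement holds and \eqref{ccc} is satisfied. Yet by Theorem~\ref{th1pop} the limit is $(1/2,1/2,0)$ rather than $(1/3,1/3,1/3)$.
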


\begin{figure}[h!]
\renewcommand{\thefigure}{2b}
\addtocounter{figure}{-1}
\centering
\includegraphics[width=0.8\linewidth]{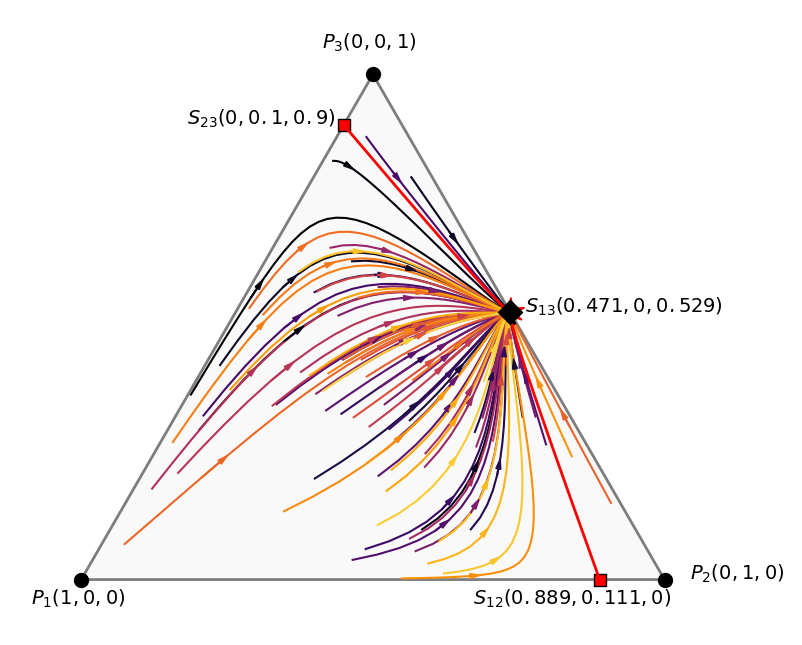}
\caption{Phase portrait on the 2D invariant simplex within the 3D state space. Case (b) with ${\bf c} = (0.8; 0.1; 0.9)$. The condition for the existence of an interior fixed point is violated (${c_2 < \frac{c_1 c_3}{c_1 + c_3}}$), inducing a transcritical bifurcation.  Consequently, the boundary fixed point exchanges stability with the former interior equilibrium, becoming the interior attractor $S_{13}$ corresponding to ${\bf p}^\infty = (0.471; 0; 0.529)$, which implies the asymptotic vanishing of the second coordinate ($p_2^\infty = 0$). The remaining saddles $S_{12}(0.889; 0.111; 0)$ and $S_{23}(0; 0.1; 0.9)$ generate unstable invariant manifolds that guide the entire phase flow directly into $S_{13}$. As in case (a), the vertices are unstable repelling fixed points.}
\label{ris:3pop_b}
\end{figure}

\begin{proof}
The condition $\kappa^t_i > L_c^t \quad (\kappa^t_i < L_c^t)$ is necessary and sufficient for the increase (decrease) of $p^t_i$, i.e.,
$$\kappa^t_i > L_c^t \Leftrightarrow p_i^{t+1} > p^t_i \quad (\kappa^t_i < L_c^t \Leftrightarrow p_i^{t+1} < p^t_i).$$
This follows directly from equations \eqref{eq:3.2}. If this condition holds for all $t$, then the sequence $\{ p_i^t\}_{t=0}^\infty$ is monotonic. The boundedness of this sequence follows from the problem statement. Therefore, an asymptotic state ${\bf p}^{\infty}$ exists. Assume all coordinates $p_i^\infty$ are non-zero. Then the invariance of the equilibrium means that $k_{i,c}^{t} =1$ for all $i= 1, 2, 3$. That is, we have the system
 \begin{equation*}
 \begin{cases}
 c_1(1-p_1^{\infty})^2=c_2 p_2^{\infty}(1-p_2^{\infty})+c_3 p_3^{\infty}(1-p_3^{\infty}),\\
 c_2(1-p_2^{\infty})^2=c_1 p_1^{\infty}(1-p_1^{\infty})+c_3 p_3^{\infty}(1-p_3^{\infty}),\\
 c_3(1-p_3^{\infty})^2=c_1 p_1^{\infty}(1-p_1^{\infty})+c_2 p_2^{\infty}(1-p_2^{\infty}),
 \end{cases}
\end{equation*}
from which we obtain
 \begin{equation}\label{eqinf}
 \begin{cases}
 p_1^\infty = \dfrac{c_1(c_2+c_3)-c_2c_3}{c_1c_2+c_1c_3+c_2c_3},\vspace{1mm}\\
p_2^\infty = \dfrac{c_2(c_1+c_3)-c_1c_3}{c_1c_2+c_1c_3+c_2c_3},\vspace{1mm}\\
 p_3^\infty = \dfrac{c_3(c_1+c_2)-c_1c_2}{c_1c_2+c_1c_3+c_2c_3}.
 \end{cases}
\end{equation}
 By problem statement, all coordinates of the vector ${\bf p}^t$ are non-negative. Therefore, equalities \eqref{eqinf} are valid only when $c_i > \frac{c_j c_k}{c_j +c_k}$ for all $i \neq j \neq k $, $i, j, k = 1, 2, 3$.

It should be noted that the equilibrium is stable, as it is determined only by a fixed set of constant parameters $c_i$, which in turn are given at $t=0$ and do not depend on time~$t$.

If for some $i$ the inequality $c_i \leq \frac{c_j c_k}{c_j +c_k}$ holds, then $$p_i^\infty = \frac{c_i(c_j+c_k)-c_jc_k}{c_ic_j+c_ic_k+c_jc_k} \leq \frac{\frac{c_j c_k}{c_j +c_k}(c_j+c_k)-c_jc_k}{c_ic_j+c_ic_k+c_jc_k} = \frac{{c_j c_k}-c_jc_k}{c_ic_j+c_ic_k+c_jc_k}=0,$$
but the case $p_i^\infty < 0 $ is impossible, so $p_i^\infty = 0 $. Hence, if for some $i$-th index condition \eqref{ccc} is violated, then the corresponding coordinate~$p_i^t$ tends to $0$ as $t \rightarrow \infty$.
 \end{proof}

Let $M^* = \{k: p_k^\infty > 0\}$ be the set of indices for which the coordinates of the equilibrium vector are strictly positive. Then $\gamma(M^*)$ is the number of elements in the set $M^*$. The existence and uniqueness of $M^*$ are determined by the iterative procedure~\eqref{eq:1}. Let's denote
$$\Lambda(M^*) = \frac{\gamma(M^*)-1}{\sum_{k \in M^*} \frac{1}{c_k}}$$
According to~\eqref{eq:3.2}, if $p_j^\infty=1$ for a unique $j \in M^*$, then $\Lambda(M^*)=0$; in all other cases, $\Lambda(M^*)>0$.

\begin{theorem}\label{thpop3}
 Let ${\bf p} \in \mathbb{R}^n$, $n > 3$. Suppose that for each $i = 1, \ldots, n$, starting from some time $t$, one of the conditions $k_i^{t} > L_c^{t}$ or $k_i^{t} < L_c^{t}$ holds. Then every trajectory of the dynamic conflict system \eqref{eq:1} converges to a fixed point
$$
{\bf p}^\infty = \lim_{t \to \infty} {\bf p}^t.
$$
For any $i=1, \ldots, n$, one of the following equalities holds:
\begin{enumerate}
    \item $p_i^\infty = 0$, if $c_i \le \Lambda(M^*)$.
    \item $p_i^\infty = 1 - \frac{\Lambda(M^*)}{c_i}$, if $c_i > \Lambda(M^*)$.
\end{enumerate}
\end{theorem}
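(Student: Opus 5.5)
The proof follows the pattern of Theorem \ref{thpop2} and has three steps: show the limit exists, use the fixed-point equations on the support, and then identify the support.

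\emph{Existence of the limit.} By \eqref{eq:3.2}, $p_i^{t+1}>p_i^t$ if and only if $\kappa_i^t>L_c^t$, and the reverse inequality gives a decrease. I read the hypothesis "$k_i^t>L_c^t$ or $k_i^t<L_c^t$" as this condition on $\kappa_i^t$. Then, from some time $T$ on, each coordinate sequence $\{p_i^t\}_{t\ge T}$ is monotone. It is bounded in $[0,1]$, so it converges. Hence ${\bf p}^\infty$ exists, lies in the simplex, and $M^*=\{k:p_k^\infty>0\}$ is well defined. By continuity of the map \eqref{eq:3.2} (its denominator is at least $n-1>0$), ${\bf p}^\infty$ is a fixed point.

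\emph{Fixed-point equations on the support.} For $k\in M^*$, passing to the limit in $p_k^{t+1}=p_k^t k_{k,c}^t$ and dividing by $p_k^\infty>0$ gives $k_{k,c}^\infty=1$. This means
$$c_k(1-p_k^\infty)=L_c^\infty=:\Lambda ,\qquad k\in M^*.$$
So $p_k^\infty=1-\Lambda/c_k$ for every $k\in M^*$. Summing over $k\in M^*$ and using $\sum_{k\in M^*}p_k^\infty=1$ gives $\gamma(M^*)-\Lambda\sum_{k\in M^*}1/c_k=1$, that is, $\Lambda=\Lambda(M^*)$. As a consistency check, $L_c^\infty=\sum_{k\in M^*}c_kp_k^\infty(1-p_k^\infty)=\Lambda\sum_{k\in M^*} p_k^\infty=\Lambda$, as required.

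If $\gamma(M^*)=1$, then $\Lambda=0$ and $p_j^\infty=1$, which agrees with the formula. Positivity of $p_k^\infty$ gives $c_k>\Lambda(M^*)$ for every $k\in M^*$. This proves item 2 for indices in the support.

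\emph{Indices outside the support; the main obstacle.} It remains to show that every $i\notin M^*$ satisfies $c_i\le\Lambda(M^*)$; item 1 then holds, since $p_i^\infty=0$ by definition. Suppose instead that $c_i>\Lambda(M^*)$ for some $i\notin M^*$. Then $\kappa_i^t=c_i(1-p_i^t)\to c_i>\Lambda(M^*)=\lim L_c^t$. So for large $t$ we have $k_{i,c}^t\ge 1+\delta$ with $\delta>0$. Consequently $p_i^t$ is eventually increasing, and since $p_i^t>0$ (positive coordinates stay positive), $p_i^t\ge p_i^{T}(1+\delta)^{t-T}$, which contradicts $p_i^t\to0$.

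This step needs $p_i^0>0$. For coordinates that start at zero, Proposition \ref{pr2}-type invariance keeps them at zero, and the statement must then be read with $M^*$ relative to the initial support; I would add this as a remark. This dichotomy, together with the resulting characterization $M^*=\{k: c_k>\Lambda(M^*)\}$, is the real content of the theorem. I would therefore take care to show that the limiting equality, rather than just an inequality, is forced, essentially mirroring the case argument at the end of Theorem \ref{thpop2}.
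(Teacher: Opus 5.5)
Your proposal is correct, and its first two steps (eventual monotonicity gives the limit; $k_{k,c}^\infty=1$ on the support; summing gives $\Lambda(M^*)$) are the same as the paper's.

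You genuinely differ in the exclusion step, and your version is the stronger one. For $j\notin M^*$ the paper only asserts that $p_j^\infty=0$ ``means'' the formal value $1-\Lambda(M^*)/c_j$ is non-positive. It then defers existence and uniqueness of a self-consistent $M^*$ to the thresholding procedure, ``proven in similar models''. It never gives a dynamical reason why an index with $c_j>\Lambda(M^*)$ cannot die out. Your invasion argument supplies that reason: $k_{i,c}^t\to(n-1+c_i)/(n-1+\Lambda(M^*))>1$ forces a positive coordinate to grow geometrically. This step uses only convergence of ${\bf p}^t$, not the sign hypothesis.

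You are also right that the statement needs $p_i^0>0$. Take $n=4$, all $c_i=c$, and ${\bf p}^0=(0.2,0.4,0.4,0)$. The sign hypothesis holds for every $t$, and the limit is $(\frac13,\frac13,\frac13,0)$ with $\Lambda(M^*)=\frac{2c}{3}<c_4$. Yet $p_4^\infty=0\neq 1-\Lambda(M^*)/c_4=\frac13$. The paper accounts for this only through $M_0$ in Step~1 of its algorithm, so make it an explicit hypothesis, or state the dichotomy within the initial support.

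What the paper's route aims at, and yours does not yet give, is that $M^*$ is determined by the $c_i$ and the initial support alone, so that the algorithm computes it. This follows quickly from your characterization. A self-consistent set must consist of the $m$ largest parameters, since $c_j\le\Lambda(M)<c_k$ for $j\notin M$, $k\in M$. Let $M_m$ denote the indices of the $m$ largest $c$'s. The identity $c_{(m+1)}>\Lambda(M_{m+1})\iff c_{(m+1)}>\Lambda(M_m)$ then leaves exactly one admissible $m$.

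Finally, nothing is gained by ``mirroring'' the case argument at the end of Theorem~\ref{thpop2}. Concluding $p_i^\infty=0$ because the interior formula is non-positive is not a valid inference, and your growth argument already closes the proof.
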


 \begin{proof}
Similarly to the proof of Theorem~\ref{thpop2} for $n=3$, based on the monotonicity of the sequences $\{p_i^t\}$, the trajectories ${\bf p}^t$ converge to an equilibrium vector~${\bf p}^\infty$, which is a fixed point of the system.

For a fixed point ${\bf p}^\infty$, if $p_i^\infty > 0$ (i.e., $i \in M^*$) and $c_i > 0$, it follows from the dynamical equation~\eqref{eq:3.2} $p_{i}^{t+1}=p_{i}^{t}\cdot k_{i,c}^{t}$, where $k_{i,c}^{t}=\frac{n-1+c_{i}(1-p_{i}^{t})}{n-1+L_{c}^{t}}$, that in the stationary state $k_{i,c}^{\infty}=1$.
This means that the numerator and denominator of $k_{i,c}^{\infty}$ are equal:
$$n-1+c_{i}(1-p_{i}^{\infty})=n-1+L_{c}^{\infty}.$$
Hence $c_i(1-p_i^\infty) = L_c^\infty$. Let's denote $L_c^\infty =: \Lambda(M^*)$, as this value depends on the set of indices $M^*$ for which $p_k^\infty > 0$ and which contribute to $L_c^\infty$.
Thus, for all $i \in M^*$, where $c_i>0$, we have
$$p_i^\infty = 1 - \frac{\Lambda(M^*)}{c_i}.$$

The sum of all coordinates of the vector ${\bf p}^\infty$ is equal to 1. Since $p_k^\infty = 0$ for $k \notin M^*$, then
$$\sum_{k \in M^*} p_k^\infty = 1.$$
Substituting the expression for $p_k^\infty$:
$$\sum_{k \in M^*} \left(1 - \frac{\Lambda(M^*)}{c_k}\right) = 1$$
$$\gamma(M^*) - \Lambda(M^*) \sum_{k \in M^*} \frac{1}{c_k} = 1$$
From this, if $\gamma(M^*) \ge 1$ and the denominator is not zero (which holds if at least one $c_k \neq 0$ for $k \in M^*$), we get:
$$\Lambda(M^*) = \frac{\gamma(M^*)-1}{\sum_{k \in M^*} \frac{1}{c_k}}$$
If $\gamma(M^*)=1$, say $M^*=\{j\}$, then $p_j^\infty=1$. In this case, $1 - \Lambda(\{j\})/c_j = 1$, which means $\Lambda(\{j\})=0$. The formula yields $(1-1)/ (1/c_j) = 0$.

For $j \notin M^*$, we have $p_j^\infty = 0$.
If $c_j > 0$, then the condition $p_j^\infty=0$ means that if we formally calculated $1 - \frac{\Lambda(M^*)}{c_j}$, the result should be non-positive, i.e., $1 - \frac{\Lambda(M^*)}{c_j} \le 0$, which is equivalent to $c_j \le \Lambda(M^*)$.
If $c_j=0$, then from equation~\eqref{eq:3.1} $p_{j}^{t+1}=\frac{p_{j}^{t}(1+c_{j}r_{j}^{t})}{z^{t}} = \frac{p_j^t}{z^t}$. If $z^\infty > 1$ (which occurs if $\Lambda(M^*) > 0$ or $L_c^\infty > 0$), then $p_j^t \rightarrow 0$. If $z^\infty = 1$ (for example, if all $c_k=0$), then $p_j$ does not change. However, in the context of the theorem, where $c_i$ affect the distribution, it is assumed that not all $c_i$ are trivial. The condition $c_j=0 \le \Lambda(M^*)$ (if $\Lambda(M^*) \ge 0$) also aligns with $p_j^\infty=0$.

The existence and uniqueness of the set $M^*$, which satisfies these self-consistent conditions ($c_i > \Lambda(M^*)$ for $i \in M^*$ and $c_j \le \Lambda(M^*)$ for $j \notin M^*$, $c_j>0$), are proven in similar models using the iterative procedure described in the theorem's formulation. The procedure converges because at each step the set $M^*$ either remains unchanged or decreases, and the number of indices is finite.

Thus, for $i \in M^*$
$$p_i^\infty = 1 - \frac{\Lambda(M^*)}{c_i} > 0.$$
For $j \notin M^*$, $p_j^\infty = 0$, which holds under the condition $c_j \le \Lambda(M^*)$.
Therefore, the explicit form of the equilibrium depends on the set $M^*$, which, in turn, is determined by the entire set of constants $c_k$.
\end{proof}

 The condition that for each $i$, the sign of $\kappa^t_i - L_c^t$ remains  constant for all $t$, is a sufficient condition to ensure the monotonicity of  each coordinate sequence $\{p_i^t\}_{t=0}^\infty$, which greatly simplifies the  proof of convergence. While this condition holds for a wide range of initial states, investigating the system's dynamics in cases where this sign may oscillate presents a complex case for the proof of convergence. This work focuses on the class of trajectories exhibiting such monotonic behavior.

\begin{remark}
If the condition
\begin{equation}\label{eneq_all}
    c_k > \frac{n-1}{\sum\limits_{j=1, j \neq k}^n \frac{1}{c_j}}
\end{equation}
holds for all $k=1, \ldots, n$, then $\gamma(M^*)=n$,
$$\Lambda(M^*)= \frac{n-1}{\sum\limits_{j=1}^n \frac{1}{c_j}},$$
and for all $i=1, \ldots, n$ we have:
$$p_i^\infty = 1 - \frac{\Lambda}{c_i} = 1 - \frac{n-1}{c_i \sum\limits_{j=1}^n \frac{1}{c_j}} > 0.$$
\end{remark}

\begin{remark}
As the mean--field vector ${\bf r}^t$ is not an independent variable, being defined at each step
as a function of ${\bf p}^t$ by formula~\eqref{eq:3.1r}, its equilibrium~${\bf r}^\infty$
is also entirely dependent on ${\bf p}^\infty$. Therefore, after the coordinates~$p_i^\infty$
have been determined from Theorems~\ref{thpop1}--\ref{thpop3}, the corresponding coordinates~$r_i^\infty$
can be readily computed as:
$$r^{\infty}_i = \frac{1 - p^{\infty}_i}{n-1}.$$
\end{remark}

Below, we present the formal mathematical procedure to analytically compute the fixed point ${\bf p}^\infty$ for a given initial vector ${\bf p}^{t=0}$ and a set of control parameters $c_i$, $i =1, \ldots, n$. The complete programmatic implementation of this algorithm, available as \emph{find\_fixed\_point.py}, can be found in the public repository (see~\ref{sec:code}).

\noindent \textbf{Input:} Initial state vector ${\bf p}^0 \in \mathbb{R}^n_+$, control parameters $c_1, \ldots, c_n > 0$. \\
\textbf{Output:} The equilibrium vector ${\bf p}^\infty \in \mathbb{R}^n_+$.
\begin{description}
    \item[Step 1.] \emph{Initialization (Filtering trivial components):} Determine the initial set of active components that participate in the competition:
    $$M_0 = \{k \in \{1, \ldots, n\} \mid p_k^0 > 0\}.$$
    Set the initial candidate set $M_{cand} = M_0$.

    \item[Step 2.] \emph{Iterative thresholding (Finding the equilibrium set $M^*$):} Update the candidate set $M_{cand}$ iteratively:
    \begin{enumerate}
        \item[(2.a)] Calculate the critical threshold $\Lambda(M_{cand})$ for the current set:
        $$ \Lambda(M_{cand}) = \begin{cases} 0, & \text{if } |M_{cand}| \le 1, \\ \frac{|M_{cand}| - 1}{\sum\limits_{k \in M_{cand}} \frac{1}{c_k}}, & \text{otherwise.} \end{cases} $$
        \item[(2.b)] Identify the subset of indices that satisfy the strict survival condition:
        $$ M_{new} = \{k \in M_{cand} \mid c_k > \Lambda(M_{cand})\}. $$
        \item[(2.c)] Check for stabilization: if $M_{new} = M_{cand}$, the iteration terminates. Set the final equilibrium set $M^* = M_{cand}$ and proceed to Step 3. Otherwise, update $M_{cand} = M_{new}$ and repeat Step 2.
    \end{enumerate}

    \item[Step 3.] \emph{Computation of the final state:} Calculate the exact coordinates of the fixed point ${\bf p}^\infty$ for all $i = 1, \ldots, n$:
    $$ p_i^\infty = \begin{cases} 1 - \frac{\Lambda(M^*)}{c_i}, & \text{if } i \in M^*, \\ 0, & \text{if } i \notin M^*. \end{cases} $$
\end{description}

\subsection{Analysis of the stability of equilibria}
\textbf{Case $n=2$}. Theorem~\ref{thpop1} states that for a system in the space $\mathbb{R}_{+}^{2}$, the trajectories converge to the fixed point
$$
{\bf p}^{\infty} = (p_1^{\infty}, p_2^{\infty}) = \left(\frac{c_1}{c_1+c_2}, \frac{c_2}{c_1+c_2}\right)
$$
In the previous subsection, we proved convergence to this point. We will now prove its stability.

Since $p_1 + p_2 = 1$, the dynamical system is one-dimensional. The dynamics can be expressed in terms of a single variable, for instance $p_1$. Let $p_1^t = x$, which implies $p_2^t = 1-x$.

The dynamical equation for $p_1^{t+1}$ is given by:
$$
p_1^{t+1} = f(p_1^t) = \frac{p_1^t(1+c_1 r_1^t)}{z^t}
$$
$$r_1^t = \frac{1-p_1^t}{2-1} = 1-p_1^t, \quad r_2^t = \frac{1-p_2^t}{2-1} = 1-(1-p_1^t) = p_1^t,$$
$$z^t = p_1^t(1+c_1 r_1^t) + p_2^t(1+c_2 r_2^t) = p_1^t(1+c_1(1-p_1^t)) + (1-p_1^t)(1+c_2 p_1^t).$$

Thus, our map function is:
\begin{equation}\label{x=2_func}
f(x) = \frac{x(1+c_1(1-x))}{x(1+c_1(1-x)) + (1-x)(1+c_2 x)}
= \frac{x + c_1 x - c_1 x^2}{1+(c_1+c_2)x-(c_1+c_2)x^2}
\end{equation}
The fixed point $x^{\infty} = p_1^{\infty} = \frac{c_1}{c_1+c_2}$ is the solution to the equation $f(x) = x$. For stability analysis, we need to find the derivative $f'(x)$ and evaluate it at the point $x^{\infty}$. For stability, we require $|f'(x^{\infty})| < 1$.

At the fixed point ${\bf p}^{\infty}$, the condition $c_i(1-p_i^\infty) = L_c^\infty$ holds for all $i$ where $p_i^\infty > 0$, and $L_c^\infty$ is
$$L_c^\infty = c_1 p_1^\infty (1-p_1^\infty) + c_2 p_2^\infty (1-p_2^\infty) = c_1 p_1^\infty p_2^\infty + c_2 p_2^\infty p_1^\infty = p_1^\infty p_2^\infty (c_1+c_2)$$

Substituting the values of $p_1^\infty$ and $p_2^\infty$:
\begin{equation*}
L_c^\infty = \frac{c_1}{c_1+c_2} \cdot \frac{c_2}{c_1+c_2} \cdot (c_1+c_2) = \frac{c_1 c_2}{c_1+c_2}
\end{equation*}

To find the derivative $f'(x)$, we apply the quotient rule, which leads to the following complex fraction:
\begin{equation*}
\begin{split}
    f'(x) = \frac{1}{[1 + (c_1+c_2)x - (c_1+c_2)x^2]^2} \Biggl( & (1 + c_1 - 2c_1x) \cdot [1 + (c_1+c_2)x - (c_1+c_2)x^2] \\
    & - (x + c_1x - c_1x^2) \cdot [c_1 + c_2 - 2(c_1+c_2)x] \Biggr)
\end{split}
\end{equation*}
We will evaluate $f'(x)$ at the fixed point $x^{\infty} = \frac{c_1}{c_1+c_2}$.

At the fixed point, the numerator of the function~\eqref{x=2_func} is equal to the product of the denominator and $x^\infty$:
$$ x^{\infty} + c_1x^{\infty} - c_1(x^{\infty})^2 = x^{\infty} \cdot [1 + (c_1+c_2)x^{\infty} - (c_1+c_2)(x^{\infty})^2] $$
This allows us to simplify the expression for the derivative at the point $x^{\infty}$:
\begin{equation*}
f'(x^{\infty})= \frac{ (1 + c_1 - 2c_1x^{\infty}) - x^{\infty} \cdot [c_1 + c_2 - 2(c_1+c_2)x^{\infty}] }{ 1 + (c_1+c_2)x^{\infty} - (c_1+c_2)(x^{\infty})^2 }
\end{equation*}
Now, let's consider only the numerator of the resulting fraction
\begin{align*}
c_1 + c_2 - 2(c_1+c_2)x^{\infty} &= c_1 + c_2 - 2(c_1+c_2)\left(\frac{c_1}{c_1+c_2}\right) \\
&= c_1 + c_2 - 2c_1 = c_2 - c_1
\end{align*}
Then
\begin{align*}
(1 + c_1 - 2c_1x^{\infty}) - x^{\infty}(c_2 - c_1)
&= 1 + c_1 - 2c_1x^{\infty} - c_2x^{\infty} + c_1x^{\infty}= 1 + c_1 - x^{\infty}(c_1 + c_2) \\
&= 1 + c_1 - \left(\frac{c_1}{c_1+c_2}\right)(c_1 + c_2) = 1 + c_1 - c_1 = 1
\end{align*}
Thus, the entire numerator is equal to 1. Returning to the expression for $f'(x^\infty)$, we get:
\begin{equation}
f'(x^{\infty}) = \frac{1}{1 + (c_1+c_2)x^{\infty} - (c_1+c_2)(x^{\infty})^2}
\end{equation}
The denominator is the value $z^\infty = 1+L_c^\infty$. Thus, the result is proven:
\begin{equation*}
f'(x^{\infty}) = \frac{1}{1+L_c^\infty}
\end{equation*}
Therefore,
\begin{equation*}
f'(p_1^\infty) = \frac{1}{1 + \frac{c_1 c_2}{c_1+c_2}} = \frac{c_1+c_2}{c_1+c_2+c_1c_2}
\end{equation*}
Since $c_1$, $c_2$ are positive constants for an internal fixed point, it follows that $c_1c_2 > 0$. Consequently, the denominator $(c_1+c_2+c_1c_2)$ is strictly greater than the numerator $(c_1+c_2)$.

Thus, the derivative at the fixed point satisfies the condition:
\begin{equation}
0 < f'(p_1^\infty) < 1
\end{equation}
Since $|f'(p_1^\infty)| < 1$, the fixed point ${\bf p}^{\infty}$ is locally asymptotically stable.

It is also necessary to analyze the stability of the boundary fixed points $(1;0)$ and $(0;1)$. The stability of these points is determined by the derivative of the map $f(x)$ at $x=1$ and $x=0$ respectively. A direct calculation yields:
\begin{equation*}
    f'(1) = 1+c_2 \quad f'(0) = 1+c_1
\end{equation*}
Since $c_1, c_2 > 0$, the modulus of the derivative at both points is strictly greater than 1. This proves that the boundary fixed points $(1;0)$ and $(0,1)$ are unstable repellers. Any small perturbation away from these ``pure'' states will cause the system to evolve towards the internal stable fixed point ${\bf p}^{\infty}$.

\begin{theorem}
For the dynamic conflict system \eqref{eq:1} at case $n=2$ the following stability properties hold:

    The point
    $${\bf p}^{\infty} = \left(\frac{c_1}{c_1+c_2}, \frac{c_2}{c_1+c_2}\right)$$
    is \emph{locally asymptotically stable}. It acts as the unique attractor of the system.

    The points ${\bf e}_1 = (1,0)$ and ${\bf e}_2 = (0,1)$ are \emph{unstable repellers}.
\end{theorem}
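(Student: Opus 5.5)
The plan is to reduce everything to the one-dimensional map $f$ from \eqref{x=2_func}. Using $p_2=1-p_1$, the system on the simplex is conjugate to $x\mapsto f(x)$ on $[0,1]$. The fixed points of $f$ are exactly $x=0$, $x=1$, and $x^\infty=\frac{c_1}{c_1+c_2}$. To see this, note that $f(x)-x$ has numerator
$$x(1+c_1(1-x))-x\bigl(1+(c_1+c_2)x(1-x)\bigr)=x(1-x)\bigl(c_1-(c_1+c_2)x\bigr).$$
So there are no other equilibria to classify. Local stability of $x^\infty$ is then immediate from the computation above, $f'(x^\infty)=\frac{1}{1+L_c^\infty}=\frac{c_1+c_2}{c_1+c_2+c_1c_2}\in(0,1)$. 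Instability of ${\bf e}_1,{\bf e}_2$ follows from $f'(0)=1+c_1>1$ and $f'(1)=1+c_2>1$. I would verify these two values by direct substitution into the quotient formula for $f'$: at $x=0$ the denominator equals $1$ and the numerator equals $1+c_1$, and $x=1$ is symmetric.

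To justify the phrase ``unique attractor'', I would combine this with Theorem~\ref{thpop1}, which shows that every orbit starting in the open interval $(0,1)$ converges to $x^\infty$. A cleaner self-contained argument uses the sign identity
$$f(x)-x=\frac{x(1-x)\bigl(c_1-(c_1+c_2)x\bigr)}{1+(c_1+c_2)x(1-x)}.$$
This shows that $f(x)>x$ for $x<x^\infty$ and $f(x)<x$ for $x>x^\infty$. Next I would show that $f$ is increasing on $[0,1]$, so that $f(x)<x^\infty$ whenever $x<x^\infty$. Monotonicity follows from the two-coordinate case of the order-preservation argument of the Proposition on $\kappa^t_i$, or directly from the sign of the numerator of $f'$. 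With both facts, orbits are monotone and bounded, and their limit must be a fixed point in $(0,1)$, which can only be $x^\infty$. Hence the basin of $x^\infty$ is all of $(0,1)$. The only other invariant points are the two vertices, and these are repelling, so $x^\infty$ is the unique attractor.

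The main obstacle is proving that $f$ is increasing on all of $[0,1]$, not just near $x^\infty$. The numerator of $f'$ is a quadratic in $x$, and showing it stays positive requires a short estimate using $0<c_i\le 1$. I would first try writing $f(x)=\frac{g(x)}{g(x)+h(x)}$ with $g=x(1+c_1(1-x))$ and $h=(1-x)(1+c_2x)$. Then $f'$ has the sign of $g'h-gh'$, and with $c_i\le1$ one gets $g'=1+c_1-2c_1x\ge 1-c_1\ge0$ and $h'=c_2-1-2c_2x\le0$. Since $g,h\ge0$, this gives $g'h-gh'\ge0$, with equality at most at isolated points. This settles monotonicity and completes the plan.
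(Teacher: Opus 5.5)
Your proposal is correct, and it is more self-contained than the paper's argument. The local part coincides with the paper: the same reduction to the one-dimensional map $f$ in \eqref{x=2_func}, the same value $f'(x^\infty)=\frac{1}{1+L_c^\infty}=\frac{c_1+c_2}{c_1+c_2+c_1c_2}$, and the same vertex derivatives $f'(0)=1+c_1$, $f'(1)=1+c_2$.

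The difference is in the ``unique attractor'' clause. The paper supports it only by citing Theorem~\ref{thpop1}. That proof shows $p_1$ moves toward $x^\infty$ in one step, then asserts that $p_1^t/p_2^t$ stays on the same side of $c_1/c_2$ for all $t$. It writes $\frac{c_1}{c_2}<\frac{p_1}{p_2}<\frac{p_1^1}{p_2^1}$, but the second inequality actually goes the other way. So the no-overshoot step, which keeps the orbit monotone and its limit away from the vertices, is never justified. Your cobweb argument, combining the sign identity for $f(x)-x$ with monotonicity of $f$, supplies exactly this step. The paper's route is shorter because it outsources global convergence; yours is confined to the one-dimensional reduction but actually closes the gap.

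Two small remarks. First, the Proposition on $\kappa_i^t$ assumes $c_i=c_j$ and compares two coordinates of the same vector. It gives no-overshoot only when $c_1=c_2$, so it cannot be your source of monotonicity in general; your direct computation is the one to use. Second, your restriction $c_i\le 1$ is sufficient but not needed. The cubic terms cancel and $g'h-gh'=(c_1+c_2)x^2-2c_1x+1+c_1$. Its discriminant is $-4(c_1+c_2+c_1c_2)<0$, so $f$ is strictly increasing for all $c_1,c_2>0$. This quadratic also reproduces $f'(0)$, $f'(1)$ and $f'(x^\infty)$ at once.
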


\textbf{Case $n \geq 3$}. In the stability analysis for $n \ge 3$, we focus on initial state vectors ${\bf p}^{t=0}$ with strictly positive coordinates, i.e. points in the interior of the $(n-1)$-dimensional simplex.
If some coordinate of the initial vector ${\bf p}^{t=0}$ is exactly zero, the trajectory remains forever on the corresponding boundary face in the absence of perturbations, since this face is an invariant set of the dynamics.
However, any arbitrarily small transversal (out-of-face) perturbation makes this coordinate positive and shifts the system's state into a higher-dimensional face of the simplex.
The equilibrium ${\bf p}^\infty$ in this perturbed case will, in general, differ from the boundary equilibrium, which implies that such a boundary fixed point is unstable with respect to transversal perturbations, even if it is stable with respect to perturbations tangential to the face.

Let us consider the map $\mathbf{f}: \mathbb{R}^n \to \mathbb{R}^n$, whose components $f_i$ for a system of $n$ elements are given as:
$$f_i(\mathbf{p}) = p_i \cdot \frac{n-1+c_i(1-p_i)}{n-1+L_c}, \quad \text{where } L_c = \sum_{k=1}^n c_k p_k (1-p_k).$$
The elements of the Jacobian matrix $J(\mathbf{p})$, defined as $J_{ik} = \frac{\partial f_i}{\partial p_k}$, have the following form:
$$J_{ik}(\mathbf{p}) =
\begin{cases}
    \frac{n-1+c_i(1-p_i)}{n-1+L_c} + p_i \left( \frac{-c_i(n-1+L_c) - c_i(1-2p_i)(n-1+c_i(1-p_i))}{(n-1+L_c)^2} \right), & \text{if } i=k \\
    \\
    -p_i \frac{(n-1+c_i(1-p_i)) \cdot c_k(1-2p_k)}{(n-1+L_c)^2}, & \text{if } i \neq k
\end{cases}
$$

A direct analytical proof of local asymptotic stability for the general case of $n \ge 3$ is infeasible. The stability of the system is determined by the eigenvalues $\lambda$ of the Jacobian matrix $J(\mathbf{p}^\infty)$, which are the roots of the characteristic equation $\det(J(\mathbf{p}^\infty) - \lambda I) = 0$. The problem is that the elements of the matrix $J(\mathbf{p}^\infty)$ depend on the coordinates of the fixed point $p_i^\infty$ and the value $\Lambda$. These quantities, in turn, are highly complex nonlinear functions of the initial system parameters~$c_i$, $i = 1, \ldots, n$. As a result, the coefficients of the characteristic polynomial become exceedingly cumbersome rational expressions of~$c_i$, $i = 1, \ldots, n$.
The characteristic equation is a polynomial of degree $n$. According to the Abel-Ruffini theorem, for polynomials of degree $n \ge 5$, no general algebraic solution exists that expresses the roots in terms of the coefficients using a finite number of arithmetic operations and root extractions.

Given the complexity of an analytical proof, numerical methods are employed to verify stability in each specific case. To automate this process, a dedicated software tool was developed in Python (see~\ref{sec:code}, \emph{stability\_analysis.py}) that performs a stability analysis for any given point ${\bf p}^\infty$ and set of parameters $c_i$. This application supports arbitrary dimensions $n \geq 2$ and any admissible set of constants $c_i$, $i = 1, \ldots, n$. The procedure consists of the following steps:

\begin{enumerate}
    \item For a given point ${\bf p}^\infty$, the set of indices corresponding to non-zero (active) components and the set of indices corresponding to zero components are determined. This division is key, as stability is analyzed separately for perturbations along the active directions (tangential stability) and perpendicular to them (transversal stability).

    \item For the subspace defined by the active components, the corresponding Jacobian sub-matrix $J_{\text{active}}({\bf p}^\infty)$ is formed. Its eigenvalues $\lambda$ are calculated, and the spectral radius $\rho(J_{\text{active}}) = \max |\lambda_i|$ is determined. Stability against tangential perturbations is determined by the criterion $\rho(J_{\text{active}}) < 1$.

    \item For each zero component $p_i^\infty=0$, the corresponding ``normal'' eigenvalue $\lambda_{\text{norm}, i}$ is calculated. If at least one of these values exceeds 1 ($\lambda_{\text{norm}, i} > 1$), the point is considered unstable to perturbations that move it out of the subspace in which it lies.

    \item The final conclusion about the stability of the point ${\bf p}^\infty$ is based on the results of both analyses. A point is stable only if it is stable to both tangential and transversal perturbations. The presence of transversal instability is a sufficient condition for the overall instability of the point, regardless of the result of the tangential stability analysis.
\end{enumerate}

Extensive numerical experiments confirm that for a vector ${\bf p}^{t=0}$ with positive coordinates and randomly generated $c_i$, $i = 1, \ldots, n$ satisfying the conditions of Theorems~\ref{thpop2} and \ref{thpop3}, the spectral radius remains below unity, thus supporting the local asymptotic stability of the computed fixed point.
In cases where one or more $c_i$ cross the critical threshold~$\Lambda(M^*)$, the program consistently detects loss of stability through the emergence of an eigenvalue with $|\lambda_i|>1$.

These consistent analytical considerations and numerical findings allow us to formulate the following general proposition regarding the stability of the system for $n \geq 3$.

Based on these consistent analytical considerations and extensive numerical simulations, we observe a strong pattern regarding the stability of the system for $n \ge 3$. Specifically, for the dynamic conflict system \eqref{eq:1} with an arbitrary set of positive parameters~$c_i$ and initial conditions $p_i^{t=0} > 0$ for all $i=1, \ldots, n$, the trajectory consistently converges to a unique fixed point ${\bf p}^\infty$, whose existence and analytical form are determined by Theorems~\ref{thpop2} and \ref{thpop3}.

The numerical evidence compellingly suggests that this equilibrium ${\bf p}^\infty$ is locally asymptotically stable, while any boundary equilibria (where at least one coordinate {$p_i^\infty = 0$}) are unstable. Furthermore, these empirical observations lead us to the hypothesis that this unique fixed point acts as a global attractor for the set of all initial vectors with strictly positive coordinates.

\subsection{Bifurcation analysis of equilibria}
The conditions derived in Theorem \ref{thpop3}, which determine whether a component $p_i^\infty$ is zero or positive, describe a series of bifurcations (see \cite{GH13, Timokha2025, Avrutin2019, Baiardi2020, Sushko2016}).
It is worth noting that such structural transitions are a fundamental feature of collective dynamics in complex networks, particularly in mean--field models of coupled oscillators with mixed attractive-repulsive (conformist and contrarian) interactions \cite{Pikovsky2001, Daido1992, Hong2011_PRL, Hong2011_PRE, Burylko2020, Burylko2021}.
The specific bifurcation occurring here is a \emph{transcritical bifurcation}, where a stable interior fixed point collides with an unstable boundary fixed point and they exchange stability. This mechanism, common in ecological models \cite{Murray2002}, precisely describes how a component is eliminated from or ``switched on'' in the system.

\subsubsection{Analysis for a single control parameter}
 The instability of boundary equilibria, established in the preceding analysis, is dynamically governed by bifurcations.
 The conditions under which a component of the equilibrium, $p_{i}^{\infty}$, becomes zero or positive essentially describe bifurcations.
 When the parameter $c_i$ crosses a certain threshold value, a qualitative change occurs in the system's asymptotic state: the coordinate $p_{i}^{\infty}$ transitions from a zero value to a positive value, or vice versa. Mathematically, this  means that the system's fixed point moves from a face of the simplex (where $p_i=0$) into the interior of a higher-dimensional face (where $p_i>0$), or vice versa.
 Such phenomena on the boundary of the phase space are interpreted as transcritical bifurcations.

\begin{theorem}
\label{thm:bifurcation}
Let the parameters $\{c_2, \ldots, c_n\}$, $n>2$ be fixed and satisfy the stability condition on the subspace:
$$ c_k > \frac{n-2}{\sum\limits_{j=2, j \neq k}^n \frac{1}{c_j}} \quad \forall k = 2, \ldots, n $$
Let $c_1$ be a bifurcation parameter. Then there exists a critical value $c_1^{\text{crit}}$, defined as:
$$ c_1^{\text{crit}} = \frac{n-2}{\sum\limits_{j=2}^n \frac{1}{c_j}} $$
such that the dynamical system exhibits the following behavior:
\begin{enumerate}
    \item If $c_1 > c_1^{\text{crit}}$, the system has a unique global attractor $\mathbf{p}^\infty$, in which all components are strictly positive ($p_i^\infty > 0$, $\forall \ i=1, \ldots, n$).

    \item If $c_1 < c_1^{\text{crit}}$, the unique attractor of the system is a boundary fixed point $\tilde{\mathbf{p}}^\infty$, for which the first component is zero ($\tilde{p}_1^\infty = 0$) while all others are positive ($\tilde{p}_k^\infty > 0$ for $k \ge 2$).

    \item At $c_1 = c_1^{\text{crit}}$, a transcritical bifurcation occurs, at which point the attractor with positive components $\mathbf{p}^\infty$ merges with the boundary attractor $\tilde{\mathbf{p}}^\infty$, whose first component is zero.
\end{enumerate}
\end{theorem}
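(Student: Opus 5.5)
Let $S=\sum_{j=2}^n 1/c_j$, so that $c_1^{\text{crit}}=(n-2)/S$. The plan is to combine the explicit equilibrium formula of Theorem~\ref{thpop3} with a transversal eigenvalue computation at the boundary fixed point.

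\textbf{Step 1: the boundary equilibrium.} On the invariant face $\{p_1=0\}$ the hypothesis on $c_2,\ldots,c_n$ is exactly the condition of the Remark (inequality~\eqref{eneq_all}) for the $(n-1)$ components $2,\ldots,n$. Hence Theorem~\ref{thpop3} gives the boundary fixed point
$$\tilde p_1^\infty=0,\qquad \tilde p_k^\infty=1-\frac{\tilde\Lambda}{c_k},\qquad \tilde\Lambda=\frac{n-2}{S}=c_1^{\text{crit}},$$
and $\tilde p_k^\infty>0$ for all $k\ge 2$. Note that $\tilde L_c^\infty=\tilde\Lambda$, since $c_k(1-\tilde p_k^\infty)=\tilde\Lambda$ for every active $k$.

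\textbf{Step 2: the transversal eigenvalue.} At a point with $p_1=0$, the first row of the Jacobian computed above is $J_{1k}=0$ for $k\neq 1$. The first coordinate therefore decouples, and the normal eigenvalue is
$$\lambda_{\rm norm}=J_{11}=\frac{n-1+c_1}{n-1+\tilde\Lambda}.$$
This exceeds $1$ if and only if $c_1>\tilde\Lambda=c_1^{\text{crit}}$. So the boundary point is transversally unstable for $c_1>c_1^{\text{crit}}$ and transversally attracting for $c_1<c_1^{\text{crit}}$.

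\textbf{Step 3: the interior equilibrium and the crossing.} Take $M^*=\{1,\ldots,n\}$. Then $\Lambda=(n-1)/(1/c_1+S)$, and the elementary equivalence
$$c_1>\Lambda \iff c_1S>n-2 \iff c_1>c_1^{\text{crit}}$$
shows that $p_1^\infty=1-\Lambda/c_1>0$ exactly when $c_1>c_1^{\text{crit}}$. For the other components, $c_k>\Lambda$ follows from $c_k>\tilde\Lambda$ together with the observation that, when $c_1\ge\tilde\Lambda$, the value $\Lambda$ is a weighted harmonic-type mean of $c_1$ and $\tilde\Lambda$ lying between them. More precisely, $\Lambda\le c_1$, and $\Lambda$ is close to $\tilde\Lambda$ near the threshold, so the inequality $c_k>\Lambda$ needs checking. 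This is where I would argue by monotonicity of the iterative thresholding of Step~2 of the algorithm.

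When $c_1<c_1^{\text{crit}}$, index $1$ is removed by the thresholding. The set then stabilises at $\{2,\ldots,n\}$, so the formula of Theorem~\ref{thpop3} yields $\tilde{\mathbf p}^\infty$. At $c_1=c_1^{\text{crit}}$ we get $\Lambda=\tilde\Lambda$ and $p_1^\infty=0$, so $\mathbf p^\infty=\tilde{\mathbf p}^\infty$: the two branches coincide. Since $\lambda_{\rm norm}-1$ changes sign linearly in $c_1$ at the crossing, this is a transcritical exchange of stability.

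\textbf{Main obstacle.} The hard part is the word ``global'' in the statement. Uniqueness of the limit follows from Theorem~\ref{thpop3}, but convergence there is proved only under the monotonicity (sign-constancy) hypothesis on $\kappa_i^t-L_c^t$. I would therefore state the result under that hypothesis, as in Theorems~\ref{thpop2}--\ref{thpop3}. I would then combine it with local stability: tangential stability on the face follows from the paper's numerical evidence for $n\ge3$, and transversal stability follows from Step~2. I expect a fully unconditional global-attractor proof to be out of reach with the present tools.
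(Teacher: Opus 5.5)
Your Steps~1 and~3 follow the paper's proof. The paper likewise reads the interior and boundary equilibria off Theorem~\ref{thpop3}, checks $p_1^\infty>0\iff c_1>c_1^{\text{crit}}$ and $\tilde p_k^\infty>0$, and then declares each point the attractor. Your Step~2 goes beyond it: the normal eigenvalue $\lambda_{\text{norm}}=(n-1+c_1)/(n-1+\tilde\Lambda)$ is correct, and it supplies the exchange of stability that the paper only asserts. Your caveat about ``global'' is also accurate; the paper does not prove it either. The genuine gap is the one you flag in Step~3. For $c_1>c_1^{\text{crit}}$ you need $c_k>\Lambda=(n-1)/(1/c_1+S)$ for every $k\ge2$. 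Monotonicity of the thresholding cannot give this, because the thresholding discards $k$ exactly when $c_k\le\Lambda$; it presupposes the inequality rather than proving it. The paper's proof omits this check without comment.

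In fact, from $c_k>\tilde\Lambda$ alone (all that you, or the paper, extract from the hypothesis) the inequality is false. The reason is the one you already noticed: $\Lambda$ lies strictly between $\tilde\Lambda$ and $c_1$, and it increases to the harmonic mean $(n-1)/S\ge\min_{k\ge2}c_k$ as $c_1\to\infty$. Concretely, take $n=3$ and $\mathbf c=(0.8,0.1,0.9)$ (Fig.~\ref{ris:3pop_b}). Then $\tilde\Lambda=0.09<c_2,c_3$ and $c_1>c_1^{\text{crit}}$, yet $\Lambda=2/(1.25+10+10/9)\approx0.162>c_2$, so $p_2^\infty=0$. Item~1 therefore holds only for $c_1^{\text{crit}}<c_1<\bigl((n-1)/\min_{k\ge2}c_k-S\bigr)^{-1}$, with upper bound $+\infty$ iff $c_2=\cdots=c_n$.

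Taken literally, the hypothesis does close the step in one line. Indeed $c_k\sum_{j\ge2,j\ne k}1/c_j>n-2$ is equivalent to $c_kS>n-1$, so $c_k>(n-1)/S>\Lambda$ for all $c_1>0$. But the hypothesis is unsatisfiable: for the index $k$ with the smallest $c_k$ one has $\sum_{j\ge2,j\ne k}1/c_j\le(n-2)/c_k$. For $n=3$ it reads $c_2>c_3$ and $c_3>c_2$. The same off-by-one error makes \eqref{eneq_all}, which you correctly identified with it, vacuous.

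So the missing step is exactly where the statement breaks. As written it holds vacuously. Under the meaningful hypothesis $c_k>c_1^{\text{crit}}$, only the local picture survives: your Step~2, item~3, item~1 restricted to the window above, and item~2. For item~2, $c_1<\Lambda<\tilde\Lambda<c_k$, so the thresholding indeed stops at $\{2,\ldots,n\}$.
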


\begin{figure}
\centering
\includegraphics[width=0.85\textwidth]{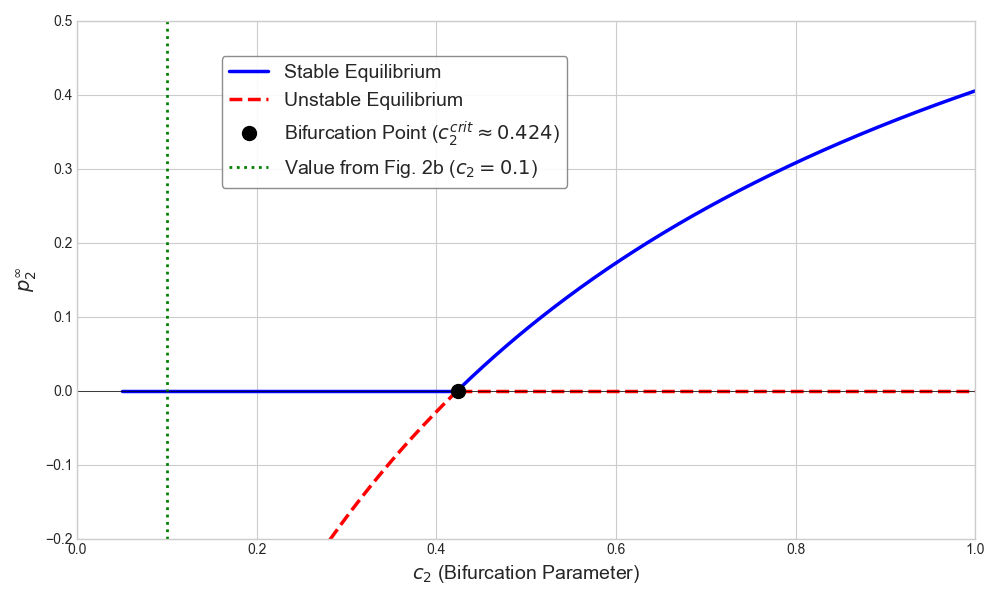}
\caption{Diagram of the transcritical bifurcation for the component $p_2^\infty$ as a function of the parameter $c_2$. This diagram visualizes the conclusions of Theorem~\ref{thm:bifurcation} for parameters from Fig.~\ref{ris:3pop_b}, $c_1=0.8$, $c_3=0.9$.
To provide a complete mathematical characterization, the diagram includes the unstable equilibrium branch in the non-physical region ($p_2^\infty < 0$, red dashed curve).
The vertical green dotted line at $c_2=0.1$ indicates the parameter value from the example in Fig.~\ref{ris:3pop_b}. It shows how the coordinate $p_2^\infty$ changes as the parameter $c_2$ is varied. For $c_2 > c_2^{\text{crit}}$, an exchange of stability occurs: the trivial state $p_2^\infty=0$ loses stability, and the new state with $p_2^\infty > 0$ becomes stable.
The intersection of these two branches and the subsequent exchange of their stability properties are definitive characteristics of a transcritical bifurcation.
}
\label{fig:bifurcation_diagram}
\end{figure}

\begin{proof}
\emph{1. Pre-bifurcation state: $c_1 > c_1^{\text{crit}}$ (Interior fixed point).}
Under this condition, the set of active indices at the steady state is $\mathcal{M}^*=\{1, \ldots, n\}$. The threshold value $\Lambda$ and the first component of the vector $\mathbf{p}^\infty$ depend on the parameter $c_1$:
$$ \Lambda(c_1) = \frac{n-1}{\frac{1}{c_1} + \sum\limits_{j=2}^n \frac{1}{c_j}}, \qquad p_1^\infty(c_1) = 1 - \frac{\Lambda(c_1)}{c_1} $$
The condition for the existence of an interior fixed point ($p_1^\infty(c_1) > 0$) is equivalent to the inequality $c_1 > \Lambda(c_1)$. We expand it as follows:
$$ c_1 > \frac{n-1}{\frac{1}{c_1} + \sum\limits_{j=2}^n \frac{1}{c_j}}, \quad c_1 \left(\frac{1}{c_1} + \sum_{j=2}^n \frac{1}{c_j}\right) > n-1, \quad
1 + c_1 \sum_{j=2}^n \frac{1}{c_j} > n-1$$
$$c_1 \sum_{j=2}^n \frac{1}{c_j} > n-2, \quad c_1 > \frac{n-2}{\sum\limits_{j=2}^n \frac{1}{c_j}}.$$
This proves that as long as $c_1 > c_1^{\text{crit}}$, a stable fixed point exists in the interior of the simplex.

\emph{2. Bifurcation point: $c_1 = c_1^{\text{crit}}$.}
The bifurcation occurs when the interior fixed point reaches the boundary of the simplex, i.e., when $p_1^\infty(c_1) = 0$. This condition is equivalent to the equality $c_1 = \Lambda(c_1)$. As shown above, this equality is achieved precisely at:
$$ c_1 = c_1^{\text{crit}} := \frac{n-2}{\sum\limits_{j=2}^n \frac{1}{c_j}} $$
At this point, the interior attractor $\mathbf{p}^\infty$ and the boundary fixed point $\tilde{\mathbf{p}}^\infty$ coincide, which is the hallmark of a transcritical bifurcation.

\emph{3. Post-bifurcation state: $c_1 < c_1^{\text{crit}}$ (Boundary fixed point).}
When $c_1 < c_1^{\text{crit}}$, the condition $c_1 > \Lambda(c_1)$ is violated, and a fixed point with $p_1^\infty > 0$ is no longer physically possible. The system evolves to a state where the set of active indices reduces to $\mathcal{M}_{\text{new}}^* = \{2, \ldots, n\}$, with cardinality $\gamma(\mathcal{M}_{\text{new}}^*) = n-1$. The new value $\tilde{\Lambda}$ for this subsystem on the face $p_1=0$ is calculated as:
$$ \tilde{\Lambda} = \frac{\gamma(\mathcal{M}_{\text{new}}^*)-1}{\sum\limits_{k \in \mathcal{M}_{\text{new}}^*} \frac{1}{c_k}} = \frac{(n-1)-1}{\sum\limits_{j=2}^n \frac{1}{c_j}} = \frac{n-2}{\sum\limits_{j=2}^n \frac{1}{c_j}} $$
Thus, we see that $\tilde{\Lambda} = c_1^{\text{crit}}$. The new fixed point $\tilde{\mathbf{p}}^\infty$ has the coordinates:
\begin{equation*}
  \tilde{p}_1^\infty = 0,
   \tilde{p}_k^\infty = 1 - \frac{\tilde{\Lambda}}{c_k} = 1 - \frac{c_1^{\text{crit}}}{c_k}, k = 2, \ldots, n.
\end{equation*}
According to the initial theorem condition,
$$c_k > \frac{n-2}{\sum\limits_{j=2, j \neq k}^n \frac{1}{c_j}} > \frac{n-2}{\sum\limits_{j=2}^n \frac{1}{c_j}} = c_1^{\text{crit}}, k \ge 2,$$
which guarantees the positivity of the coordinates $\tilde{p}_k^\infty > 0$. This point is the attractor of the system.

Thus, as the parameter $c_1$ passes through the critical value $c_1^{\text{crit}}$, a qualitative restructuring of the phase portrait occurs: the attractor moves from the interior of the simplex to its boundary, which corresponds to the definition of a transcritical bifurcation.
\end{proof}

\subsubsection{Analysis for two control parameters}
Having established the system's dynamics for a single bifurcation parameter, we now advance the analysis to the scenario where two control parameters, $c_1$ and $c_2$, vary simultaneously.

\begin{theorem}\label{twobif}
Let the parameters $\{c_3, \ldots, c_n\}$, $n>3$, be fixed such that
$$c_k > \frac{n-3}{\sum\limits_{j=3}^n \frac{1}{c_j}} \quad \forall k = 3, \ldots, n$$

 Let $c_1, c_2 > 0$ be control parameters.
Then for for any initial vector ${\bf p}^{t=0} = \{p_i^{t=0}\}_{i=1}^n$, $p_i^{t=0} > 0$ $\forall \ i$, the trajectory ${\bf p}^t$ converges to a unique fixed point ${\bf p}^\infty$, whose structure is determined by the following cases:

 If
 \begin{equation}\label{bif2_1}
   c_1 > \frac{n-2}{\frac{1}{c_2} + \sum\limits_{j=3}^n \frac{1}{c_j}}, \quad c_2 > \frac{n-2}{\frac{1}{c_1} + \sum\limits_{j=3}^n \frac{1}{c_j}},
    \end{equation}
then $p_i^\infty > 0$ for all $i \in \{1, \ldots, n\}$.

If
\begin{equation}\label{bif2_2}
c_1 < \frac{n-2}{\frac{1}{c_2} + \sum\limits_{j=3}^n \frac{1}{c_j}}, \quad c_2 > \frac{n-3}{\sum\limits_{j=3}^n \frac{1}{c_j}},
\end{equation}
then $p_1^\infty = 0$ and $p_i^\infty > 0$ for all $i \in \{2, \ldots, n\}$.

If
 \begin{equation*}
c_2 < \frac{n-2}{\frac{1}{c_1} + \sum\limits_{j=3}^n \frac{1}{c_j}}, \quad c_1 > \frac{n-3}{\sum\limits_{j=3}^n \frac{1}{c_j}},
 \end{equation*}
then $p_2^\infty = 0$ and $p_i^\infty > 0$ for all $i \in \{1, 3, \ldots, n\}$.

If
\begin{equation}\label{bif2_4}
c_1 \le \frac{n-3}{\sum\limits_{j=3}^n \frac{1}{c_j}}, \quad c_2 \le \frac{n-3}{\sum\limits_{j=3}^n \frac{1}{c_j}},
\end{equation}
 then $p_1^\infty = p_2^\infty = 0$ and $p_i^\infty > 0$ for all $i \in \{3, \ldots, n\}$.
\end{theorem}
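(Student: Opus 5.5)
The plan is to reduce everything to the self-consistent characterization of the equilibrium set $M^*$ from Theorem~\ref{thpop3} and to the thresholding procedure (Steps 1--3). Convergence is taken from Theorem~\ref{thpop3}, under the same monotonicity hypothesis used there. It then remains to identify $M^*$ in each parameter regime. Since $p_i^{t=0}>0$ for all $i$, the procedure starts from $M_0=\{1,\ldots,n\}$. Write $S=\sum_{j=3}^n 1/c_j$ and $\Lambda_{3}=(n-3)/S$, $\Lambda_{23}=(n-2)/(1/c_2+S)$, $\Lambda_{13}=(n-2)/(1/c_1+S)$, and $\Lambda_{\rm full}=(n-1)/(1/c_1+1/c_2+S)$.

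The key algebraic tool is the equivalence already used in the proof of Theorem~\ref{thm:bifurcation}. For a set $M$ and an index $i\notin M$, with $\Lambda(M)=(|M|-1)/\sum_{k\in M}1/c_k$, one has
\[
c_i>\Lambda(M\cup\{i\})\iff c_i>\Lambda(M),
\]
and the same holds with ``$<$'' and ``$=$''. To see this, expand $c_i(1/c_i+\Sigma)>|M|$, i.e.\ $c_i\Sigma>|M|-1$. I would state this as a small lemma. Applying it gives the four statements below.

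First, $c_1>\Lambda_{\rm full}\iff c_1>\Lambda_{23}$ and $c_2>\Lambda_{\rm full}\iff c_2>\Lambda_{13}$. So under \eqref{bif2_1} both indices $1,2$ survive. Each $k\ge3$ also survives, because $c_k>\Lambda_{3}$ by hypothesis. Applying the lemma twice, $\Lambda_3$, $\Lambda_{13}$, $\Lambda_{23}$ and $\Lambda_{\rm full}$ are ordered monotonically whenever $c_1,c_2$ exceed the relevant thresholds. I need $\Lambda_{\rm full}\le\Lambda_3$ here, which I would check via $c_1,c_2>\Lambda_{23},\Lambda_{13}\ge\Lambda_3$. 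Hence $M^*=\{1,\ldots,n\}$.

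Second, under \eqref{bif2_2}, $c_1<\Lambda_{23}$ gives $c_1<\Lambda_{\rm full}$, so index $1$ is removed. By the lemma, $c_2>\Lambda_3\iff c_2>\Lambda_{23}$, so index $2$ survives on the reduced set $\{2,\ldots,n\}$. The indices $k\ge3$ survive because $\Lambda_{23}<\Lambda_3<c_k$; the first inequality follows from $c_2>\Lambda_3$ via the lemma. Index $1$ stays excluded since $c_1<\Lambda_{23}$. This is exactly the self-consistency of Theorem~\ref{thpop3}, so $M^*=\{2,\ldots,n\}$. The third case follows by symmetry.

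The main obstacle is the first step of the procedure, which may remove a different index than intended. In case \eqref{bif2_2}, could $c_2\le\Lambda_{\rm full}$ hold as well, so that both indices are dropped at once and the procedure lands on $\{3,\ldots,n\}$? To handle this I would rely on the self-consistency (uniqueness) characterization of $M^*$ rather than on the path of the procedure. I would verify directly that the claimed $M^*$ satisfies $c_i>\Lambda(M^*)$ for $i\in M^*$ and $c_j\le\Lambda(M^*)$ for $j\notin M^*$, and then invoke the uniqueness asserted after Theorem~\ref{thpop3}. In case \eqref{bif2_2} this check is exactly what was done above.

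For \eqref{bif2_4}, the set $\{3,\ldots,n\}$ is self-consistent: $c_k>\Lambda_3$ for $k\ge3$, and $c_1,c_2\le\Lambda_3$. Positivity then follows from $p_i^\infty=1-\Lambda(M^*)/c_i>0$. A remaining subtlety is uniqueness when the inequalities are non-strict. There I would use the equality case of the lemma, which shows that the candidate sets give the same $\Lambda$ and hence coinciding fixed points.
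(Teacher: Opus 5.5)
Your strategy is the same as the paper's: pin down $M^*$ through the self-consistency conditions of Theorem~\ref{thpop3} and rewrite them as inequalities in $c_1,c_2$. The paper checks these conditions only for the indices $1$ and $2$, and uses the hypothesis on $c_3,\dots,c_n$ only in case \eqref{bif2_4}. You rightly saw that survival of every $k\ge 3$ must also be checked in the first three cases. However, your check has the inequalities reversed. The expansion behind your lemma also shows that $\Lambda(M\cup\{i\})=|M|/(1/c_i+\Sigma)$ is the mediant of $c_i=1/(1/c_i)$ and $\Lambda(M)=(|M|-1)/\Sigma$. Hence $\Lambda(M\cup\{i\})-\Lambda(M)$ has the sign of $c_i-\Lambda(M)$: adding a surviving index \emph{raises} the threshold. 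Under \eqref{bif2_1}, the facts you quote therefore give $\Lambda_{\mathrm{full}}>\Lambda_{23}\ge\Lambda_3$, the opposite of the $\Lambda_{\mathrm{full}}\le\Lambda_3$ you need. Under \eqref{bif2_2}, $c_2>\Lambda_3$ gives $\Lambda_{23}>\Lambda_3$, not $\Lambda_{23}<\Lambda_3$. So $c_k>\Lambda_3$ implies neither $c_k>\Lambda_{\mathrm{full}}$ nor $c_k>\Lambda_{23}$.

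This step cannot be repaired, because the conclusions of cases 1--3 are false under the stated hypotheses. At any fixed point with active set $M$, one has $c_i(1-p_i)=L_c$ for $i\in M$; summing forces $L_c=\Lambda(M)$ and $p_i=1-\Lambda(M)/c_i$.

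Take $n=5$ and ${\bf c}=(1,1,1,1,\tfrac35)$. Then $\Lambda_3=\tfrac{6}{11}<c_k$ for $k=3,4,5$, and $\Lambda_{13}=\Lambda_{23}=\tfrac{9}{14}<1$, so the hypothesis and \eqref{bif2_1} both hold. Yet $\Lambda_{\mathrm{full}}=\tfrac{12}{17}$ gives $p_5=-\tfrac{3}{17}$, so no fixed point with all coordinates positive exists. The self-consistent set is $\{1,2,3,4\}$, with ${\bf p}^\infty=(\tfrac14,\tfrac14,\tfrac14,\tfrac14,0)$.

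Changing $c_1$ to $\tfrac1{10}$ puts you in case \eqref{bif2_2}. But $c_5=\tfrac35<\tfrac9{14}=\Lambda_{23}$, and the equilibrium is $(0,\tfrac13,\tfrac13,\tfrac13,0)$. For $n=4$ the hypothesis is vacuous, and ${\bf c}=(1,1,1,\tfrac12)$ already breaks case 1.

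Cases 1--3 therefore need an extra assumption: $c_k>\Lambda(M^*)$ for all $k\ge 3$ with the claimed $M^*$, i.e.\ $c_k>\Lambda_{\mathrm{full}}$, $\Lambda_{23}$, $\Lambda_{13}$ respectively. With it, your self-consistency argument goes through, and case \eqref{bif2_4} is correct as you wrote it. The uniqueness of the self-consistent set, which the paper only asserts, also follows from your lemma once the $c_i$ are sorted in decreasing order. Finally, you take convergence under the monotonicity hypothesis of Theorem~\ref{thpop3}, which the statement does not assume. The paper's proof relies on the same unproved step.
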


\begin{figure}[h!]
    \centering
    \includegraphics[width=0.7\textwidth]{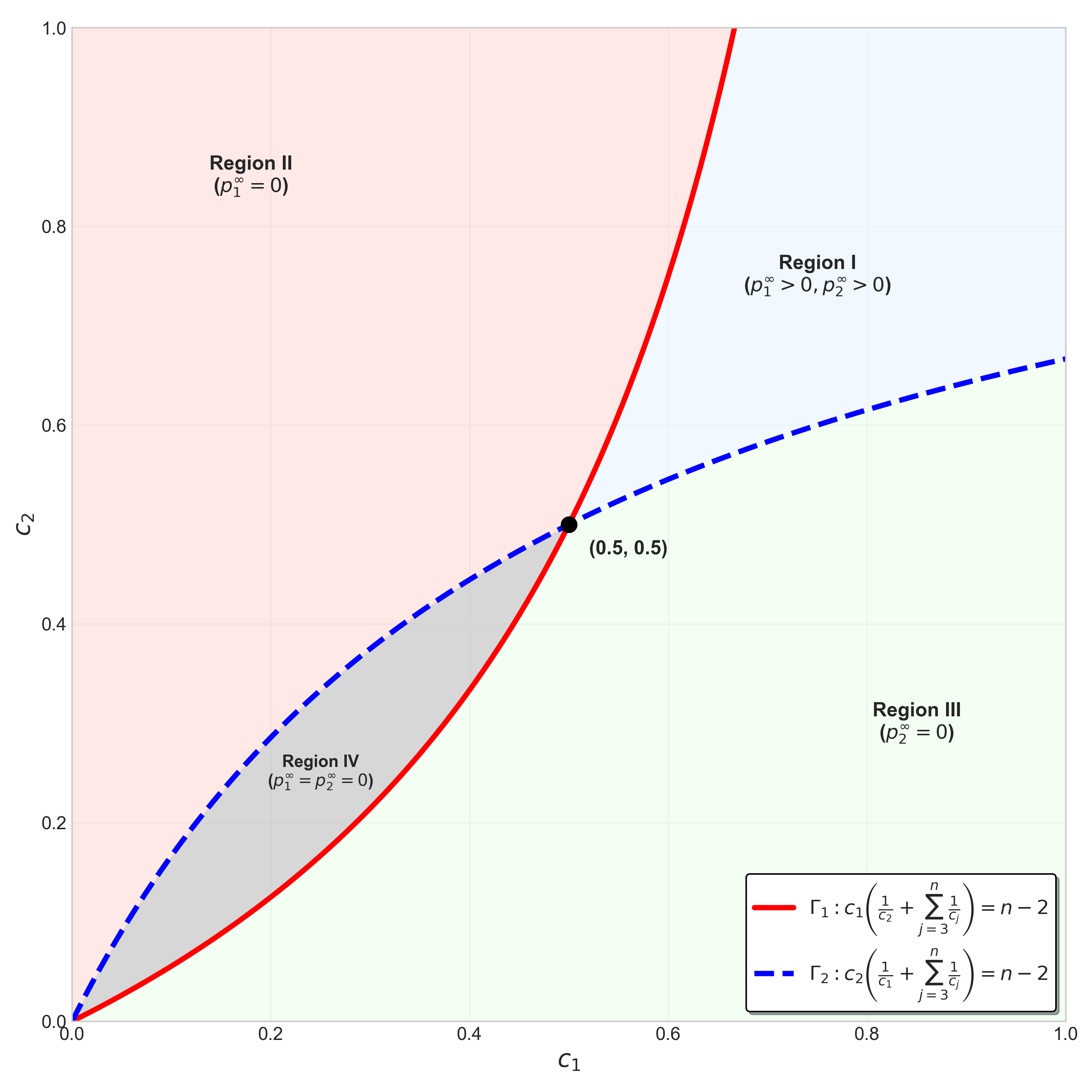}
    \caption{
    Two-parameter bifurcation diagram in the $(c_1, c_2)$ plane for $n=4$ and $c_3 = c_4=1$. The manifolds of transcritical
    bifurcations are defined by the curves
    $\Gamma_1: c_1\left(\frac{1}{c_2}+ \sum_{j=3}^n \frac{1}{c_j}\right) = n-2$
    and $\Gamma_2: c_2\left(\frac{1}{c_1} + \sum_{j=3}^n \frac{1}{c_j}\right) = n-2$.
    Crossing $\Gamma_1$ into Region~II results in $p_1^\infty = 0$, while crossing $\Gamma_2$ into Region~III results in $p_2^\infty = 0$.
    These manifolds partition the parameter space into four regions corresponding to distinct stable equilibria: \emph{Region I}
    (interior fixed point, $p_1^\infty > 0, p_2^\infty > 0$); \emph{Region II}
    (boundary fixed point, $p_1^\infty = 0, p_2^\infty > 0$); \emph{Region III}
    (boundary fixed point, $p_1^\infty > 0, p_2^\infty = 0$); and
    \emph{Region IV} (bounded by both curves, $p_1^\infty = p_2^\infty = 0$).
    The curves $\Gamma_1$ and $\Gamma_2$ intersect transversally at the point $(0.5, 0.5)$,
    where both transcritical bifurcations occur simultaneously.
}
    \label{fig:theorem7_bifurcation}
\end{figure}

\begin{proof}
The proof is based on the result from Theorem~\ref{thpop3}, which states that for a fixed point ${\bf p}^\infty$, its $k$-th coordinate $p_k^\infty$ is positive if and only if the condition $c_k > \Lambda(M^*)$ holds, where $M^* = \{i | p_i^\infty > 0\}$ is the set of indices of active (non-zero) coordinates. The threshold value $\Lambda(M^*)$ is defined as:
$$\Lambda(M^*) = \frac{\gamma(M^*)-1}{\sum\limits_{j \in M^*} \frac{1}{c_j}}$$
where $\gamma(M^*)$ is the cardinality of the set $M^*$.

1. The set of active indices is $M^* = \{1, 2, \ldots, n\}$, thus $\gamma(M^*)=n$. The threshold value is:
$$\Lambda(M^*) = \frac{n-1}{\frac{1}{c_1} + \frac{1}{c_2} + \sum\limits_{j=3}^n \frac{1}{c_j}}$$
The condition $p_i^\infty > 0$ for all $i$ requires the inequalities $c_i > \Lambda(M^*)$ to hold for all $i=1, \ldots, n$. Let's consider the condition for $c_1$:
$$c_1 > \frac{n-1}{\frac{1}{c_1} + \frac{1}{c_2} + \sum\limits_{j=3}^n \frac{1}{c_j}}, \qquad c_1\left(\frac{1}{c_1} + \frac{1}{c_2} + \sum\limits_{j=3}^n \frac{1}{c_j}\right) > n-1,$$
$$1 + \frac{c_1}{c_2} + c_1\sum\limits_{j=3}^n \frac{1}{c_j} > n-1, \qquad c_1\left(\frac{1}{c_2} + \sum\limits_{j=3}^n \frac{1}{c_j}\right) > n-2,$$
$$c_1 > \frac{n-2}{\frac{1}{c_2} + \sum\limits_{j=3}^n \frac{1}{c_j}}.$$
Similarly for $c_2$, we obtain $c_2 > \frac{n-2}{\frac{1}{c_1} + \sum\limits_{j=3}^n \frac{1}{c_j}}$. This corresponds exactly to the conditions~\eqref{bif2_1}.

2. The set of active indices is $M^*_1 = \{2, 3, \ldots, n\}$, thus $\gamma(M^*_1)=n-1$. The threshold value for this subspace is:
$$\Lambda(M^*_1) = \frac{(n-1)-1}{\frac{1}{c_2} + \sum\limits_{j=3}^n \frac{1}{c_j}} = \frac{n-2}{\frac{1}{c_2} + \sum\limits_{j=3}^n \frac{1}{c_j}}$$
The condition $p_1^\infty = 0$ is equivalent to the inequality $c_1 \le \Lambda(M^*_1)$, which yields the first condition of~\eqref{bif2_2}. The condition $p_i^\infty > 0$ for all $i \ge 2$ requires that $c_i > \Lambda(M^*_1)$ for all $i \ge 2$. Let's consider this for $c_2$:
$$c_2 > \frac{n-2}{\frac{1}{c_2} + \sum\limits_{j=3}^n \frac{1}{c_j}}, \qquad c_2\left(\frac{1}{c_2} + \sum\limits_{j=3}^n \frac{1}{c_j}\right) > n-2,$$
$$ 1 + c_2 \sum\limits_{j=3}^n \frac{1}{c_j} > n-2, \qquad c_2 > \frac{n-3}{\sum\limits_{j=3}^n \frac{1}{c_j}}.$$
This is the second condition of~\eqref{bif2_2}.

3. The proof is entirely symmetric to the proof of case~2, by swapping the indices $1 \leftrightarrow 2$.

4. The set of active indices is $M^*_{12} = \{3, \ldots, n\}$, thus $\gamma(M^*_{12})=n-2$. The threshold value is:
$$\Lambda(M^*_{12}) = \frac{(n-2)-1}{\sum\limits_{j=3}^n \frac{1}{c_j}} = \frac{n-3}{\sum\limits_{j=3}^n \frac{1}{c_j}}$$
The conditions $p_1^\infty = 0$ and $p_2^\infty = 0$ are equivalent to the inequalities $c_1 \le \Lambda(M^*_{12})$ and $c_2 \le \Lambda(M^*_{12})$, which correspond exactly to the conditions~\eqref{bif2_4}. The condition $p_i^\infty > 0$ for $i \ge 3$ requires $c_i > \Lambda(M^*_{12})$, which is the initial assumption of the theorem.
\end{proof}

\begin{remark}
The boundaries separating these parameter regions, $\Gamma_1: c_1 \bigg(\frac{1}{c_2} + \sum\limits_{j=3}^n \frac{1}{c_j}\bigg) = n-2$ and $\Gamma_2: c_2 \bigg(\frac{1}{c_1} + \sum\limits_{j=3}^n \frac{1}{c_j}\bigg) = n-2$, are the locations of transcritical bifurcations. The unique fixed point ${\bf p}^\infty$ is the attractor of the system in each respective region.
\end{remark}

\subsubsection{Analysis for multiple control parameters}
The preceding subsections considered cases involving one and two control parameters. We now generalize this analysis to the case of $k$ control parameters, where $1 \le k < n$.
This allows us to describe the conditions under which the system's attractor shifts from a state with $n$ positive components (from the interior of the simplex) to a state on its $k$-dimensional face, where $k$ chosen components become zero.

\begin{theorem}\label{thmkbif}
Let the parameters $\{c_{k+1}, \ldots, c_n\}$, $n > k+1$, be fixed and satisfy the stability condition on their $(n-k)$-component subspace:
$$c_i > \frac{n-k-1}{\sum\limits_{j=k+1}^n \frac{1}{c_j}} \quad \forall i = k+1, \ldots, n.$$
Let $c_1, \ldots, c_k > 0$ be control (bifurcation) parameters. For any initial vector ${\bf p}^{t=0}$ with $p_i^{t=0} > 0$ for all $i$, the trajectory ${\bf p}^t$ converges to a unique fixed point ${\bf p}^\infty$, whose structure is determined by the following cases:

If for all $i = 1, \ldots, k$ the condition holds:
    $$c_i > \frac{n-1}{\sum\limits_{j=1}^n \frac{1}{c_j}}$$
    then $p_i^\infty > 0$ for all $i = 1, \ldots, n$.

If for all $i = 1, \ldots, k$ the condition holds:
    $$c_i \le \frac{n-k-1}{\sum\limits_{j=k+1}^n \frac{1}{c_j}}$$
    where $M^*_k = \{k+1, \ldots, n\}$, then $p_i^\infty = 0$ for $i = 1, \ldots, k$ and $p_j^\infty > 0$ for $j \in M^*_k$.

If for some non-empty subset of indices $S \subset \{1, \ldots, k\}$ the conditions hold:
    $$c_i \le \frac{n-|S|-1}{\sum\limits_{j \in M^*_S} \frac{1}{c_j}} \quad \forall i \in S$$
    $$c_j > \frac{n-|S|-1}{\sum\limits_{j \in M^*_S} \frac{1}{c_j}} \quad \forall j \in \{1, \ldots, k\} \setminus S$$
    where $M^*_S = \{1, \ldots, n\} \setminus S$, then $p_i^\infty = 0$ for $i \in S$ and $p_j^\infty > 0$ for $j \in M^*_S$.

\end{theorem}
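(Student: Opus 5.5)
Following the pattern of the proofs of Theorems~\ref{thm:bifurcation} and~\ref{twobif}, I will treat convergence to a unique fixed point as given by Theorem~\ref{thpop3}. That theorem says $p_i^\infty>0$ if and only if $c_i>\Lambda(M^*)$, and that $M^*$ is the self-consistent set produced by the thresholding procedure of Step~2. The proof then amounts to identifying, case by case, the candidate active set $M^*$ and checking the two self-consistency conditions: $c_i>\Lambda(M^*)$ for $i\in M^*$ and $c_i\le\Lambda(M^*)$ for $i\notin M^*$.

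The key tool is an algebraic equivalence, which I will prove first as a lemma. Put $\Sigma_M=\sum_{j\in M}1/c_j$. For any $M$ and any index $i\in M$,
$$
c_i>\Lambda(M)\iff c_i\Big(\Sigma_M-\tfrac1{c_i}\Big)>|M|-2\iff c_i>\Lambda(M\setminus\{i\}).
$$
In addition, $\Lambda(M\cup\{i\})$ lies strictly between $\Lambda(M)$ and $c_i$ whenever these two differ, because $\Lambda(M\cup\{i\})$ is a weighted "mediant" of them. This is exactly the manipulation already carried out in the proofs above, now stated for general $M$.

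The cases then go as follows.

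\emph{First case.} Take $M^*=\{1,\dots,n\}$. The hypothesis gives $c_i>\Lambda(M^*)$ for $i\le k$. For $i>k$, I need $c_i>\Lambda(\{1,\dots,n\})$. I will get this from the fixed hypothesis $c_i>\Lambda(\{k+1,\dots,n\})$ by adding the indices $1,\dots,k$ one at a time. By the mediant property, each added index with $c_l$ above the current threshold raises the threshold, but never beyond $c_l$. This is where I expect the main obstacle: controlling $\Lambda(\{1,\dots,n\})$ against the smallest $c_i$ with $i>k$ may genuinely require the first-case hypothesis in combination with the equivalence lemma. I would first try to show that $\min_{i>k}c_i>\Lambda(\{1,\dots,n\})$ by contradiction. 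If the minimiser $c_m$ satisfied $c_m\le\Lambda$, then by the lemma $\Lambda(\{1,\dots,n\}\setminus\{m\})\le c_m$. On the other hand, removing the smallest $c$ raises the threshold, and this should contradict the sorted structure. If that fails, I will simply add the condition for $i>k$ as implicit in "stability on the subspace", as the two-parameter theorem implicitly does.

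\emph{Second case.} Take $M^*_k=\{k+1,\dots,n\}$. Positivity for $i>k$ is exactly the standing hypothesis. The vanishing of $p_i^\infty$ for $i\le k$ is exactly $c_i\le\Lambda(M^*_k)$. So both self-consistency conditions hold directly, and uniqueness of $M^*$ from Theorem~\ref{thpop3} gives the conclusion.

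\emph{Third case.} Take $M^*=M^*_S$, so that $\gamma(M^*_S)=n-|S|$ and $\Lambda(M^*_S)=(n-|S|-1)/\Sigma_{M^*_S}$, which is precisely the displayed threshold. The two hypotheses give the conditions for indices in $S$ and for $\{1,\dots,k\}\setminus S$. For the remaining indices $i>k$, I will argue as in the first case: the indices in $\{1,\dots,k\}\setminus S$ all have $c_j$ above the threshold, so by the mediant property adding them to $\{k+1,\dots,n\}$ keeps the fixed indices above the threshold. Uniqueness of $M^*$ then yields the stated zero/positive pattern.
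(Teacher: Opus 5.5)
Your route is the paper's: read off the active set $M^*$ case by case from the self-consistency conditions of Theorem~\ref{thpop3}. Your second case is fine. The gap is exactly the step you flagged, namely positivity of $p_i^\infty$ for the fixed indices $i>k$ in the first and third cases. It cannot be closed, because the statement is false there.

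For the first case, take $n=3$, $k=1$, ${\bf c}=(0.8,0.1,0.9)$, the paper's own example in Fig.~\ref{ris:3pop_b}. The subspace condition $c_2,c_3>1/(10+10/9)=0.09$ holds; for a two-element subspace it is vacuous anyway. The first-case condition $c_1=0.8>2/(1.25+10+10/9)\approx 0.162$ also holds. Yet $c_2=0.1<\Lambda(\{1,2,3\})\approx 0.162$, and indeed ${\bf p}^\infty=(0.471,0,0.529)$. Your contradiction argument therefore has nothing to contradict. From $c_m\le\Lambda(N)$ with $N=\{1,\dots,n\}$, your lemma and the mediant property only give $c_m\le\Lambda(N)\le\Lambda(N\setminus\{m\})$, which is perfectly consistent.

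In the third case you apply the mediant property in the wrong direction. Adding an index $l$ whose $c_l$ exceeds the current threshold \emph{raises} the threshold toward $c_l$, and that is precisely how a weak fixed $c_i$ ends up below it. For example, take $n=4$, $k=2$, $S=\{2\}$, ${\bf c}=(1,0.01,0.1,1)$. It meets every hypothesis of the third case: the subspace threshold is $1/11<0.1$, $\Lambda(M^*_S)=2/12=1/6$, and $c_2\le 1/6<c_1$. But $c_3=0.1<1/6$, and the thresholding procedure gives $M^*=\{1,4\}$, ${\bf p}^\infty=(1/2,0,0,1/2)$.

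So your fallback is not implicit in the subspace condition; it is a genuinely new hypothesis. Case 1 additionally needs $c_i>\frac{n-1}{\sum_{j=1}^n 1/c_j}$ for $i=k+1,\dots,n$. Case 3 additionally needs $c_i>\Lambda(M^*_S)$ for $i=k+1,\dots,n$. With these added, your self-consistency check closes each case.

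The uniqueness of a self-consistent set, which Theorem~\ref{thpop3} only asserts, follows from your lemma:
\begin{itemize}
\item Every self-consistent set consists of the indices with the largest $c$'s.
\item Along the decreasing order of the $c$'s, if the $(m+1)$-st index passes its threshold, so does the $m$-th.
\end{itemize}

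The paper's proof, a ``direct generalization'' of Theorem~\ref{twobif}, has the same hole: case~1 of that proof never checks $c_i>\Lambda$ for $i\ge 3$. You have located the real weak point rather than missed an idea. Finally, you and the paper both import convergence from Theorem~\ref{thpop3}, which assumes eventual monotonicity of every coordinate. That hypothesis is not available here.
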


\begin{proof}
The proof is a direct generalization of the proof of Theorem~\ref{twobif} and is based on the iterative application of the self-consistent conditions from Theorem~\ref{thpop3}. Each of the described cases corresponds to an attractor in the respective region of the $k$-dimensional parameter space $(c_1, \ldots, c_k)$.

The boundaries between these regions are hypersurfaces on which transcritical bifurcations occur, analogous to the curves $\Gamma_1, \Gamma_2$ depicted in Fig. \ref{fig:theorem7_bifurcation}. The intersection of these hypersurfaces forms a multi-parameter bifurcation structure, which partitions the parameter phase space into regions with different numbers of active components $p_i^\infty > 0$ at equilibrium.
\end{proof}

It is fundamental to note that a bifurcation leading to the simultaneous vanishing of all $n$ coordinates (i.e., $p_i^\infty = 0$ for all $i$) is impossible within this dynamical system. The reason lies in the model's core mathematical construction: the state vector ${\bf p}^t$ is a stochastic vector, which must satisfy the normalization constraint $\sum_{i=1}^{n} p_i^t = 1$ for all $t \ge 0$. The vector of  equilibrium ${\bf p}^\infty = (0, 0, \ldots, 0)$ would imply $\sum_{i=1}^{n} p_i^\infty = 0$, which violates this conserved quantity and does not belong to the state space (the standard $(n-1)$--dimensional simplex).

The most extreme topological change the system can undergo is a transition where $n-1$ coordinates vanish simultaneously. This corresponds to the system's attractor locating at one of the vertices of the simplex, ${\bf p}^\infty = {\bf e}_k = (0, \ldots, 1, \ldots, 0)$. In applied terms, this state represents the total aggregation of the resource into a single component, which is the maximal possible aggregation that still perfectly satisfies the normalization constraint.

\subsection{Criteria for optimizing extremal coordinates}
The analytical formulas for the equilibrium vector ${\bf p}^\infty$  naturally lead to questions of control and optimization. It is important to note that the dynamic conflict system \eqref{eq:1} is not a gradient system; it does not explicitly minimize a potential function. However, the fact that the asymptotic state is uniquely and analytically determined by the parameters $c_i$ allows us to treat these parameters as control variables.

The following propositions explore these criteria, which can be viewed as solutions to an optimal control problem in a deterministic mean--field system, a topic relevant to modern resource allocation challenges \cite{Lasry2007, Hung2006}.

Without loss of generality, let's assume that for all $k=1, \ldots, n$, condition~\ref{eneq_all} holds, meaning all coordinates $p_k^\infty$ are strictly positive and $\gamma(M^*)=n$. Then
\begin{equation*}
p_k^\infty = 1 - \frac{\Lambda}{c_k}, \quad \text{where} \quad \Lambda = \frac{n-1}{\sum\limits_{m=1}^n \frac{1}{c_m}}.
\end{equation*}
This implies that all parameters $c_k$ satisfy the condition $c_k > \Lambda$, $k=1, \ldots, n$.

\begin{figure}[h]
\center{\includegraphics[scale=0.6]{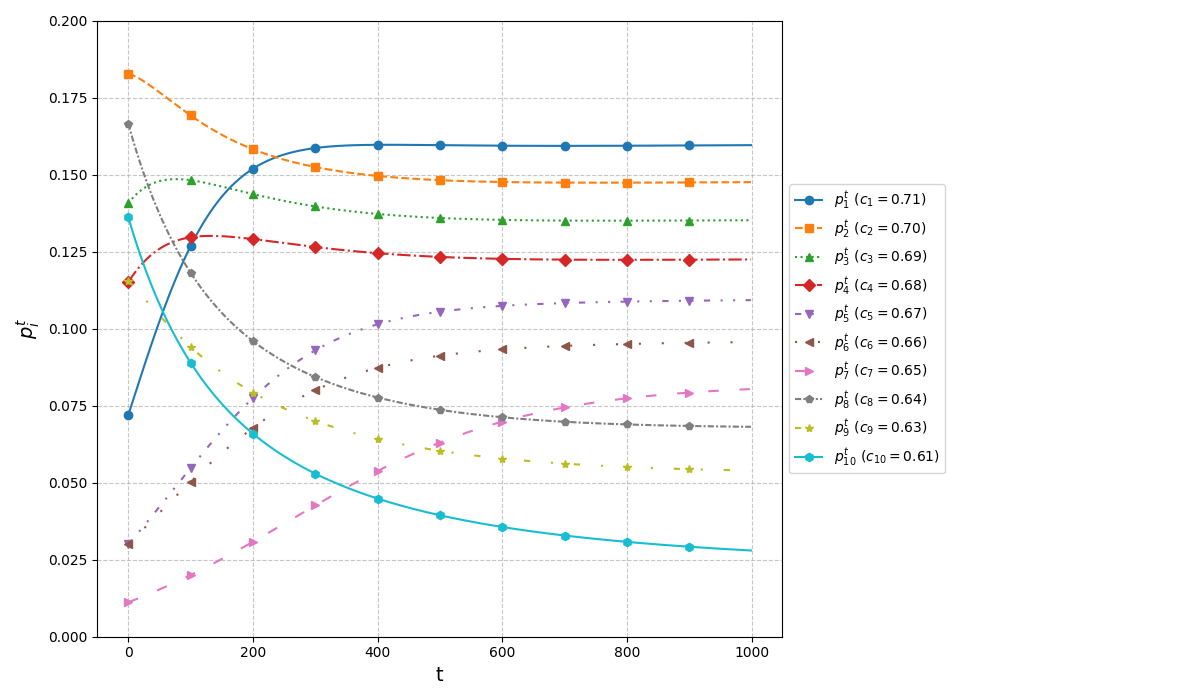}}
 \caption{Evolution of the vector coordinates from the initial state
${\bf p}^0 = (0.072; 0.183; 0.141; 0.115; 0.030;  0.030;
 0.011; 0.1666; 0.116; 0.136)$ to the equilibrium
${\bf p}^\infty = (0.160; 0.148; 0.135; 0.122; 0.109; 0.096;
 0.080; 0.068; 0.054; 0.028)$
under the action of conflict transformation~\eqref{eq:2}. All parameters $c_k$ satisfy the condition $c_k > \Lambda$, $k=1, \ldots, 10$.
The equilibrium is entirely determined by the set of constants $\{c_i\}_{i=1}^{10}$,
illustrating the optimization principles from~\ref{thm:max_coord}--\ref{thm:simultaneous_opt}:
the coordinate $p_1^t$ with the maximum $c_1=0.71$ converges to the highest value,
while $p_{10}^t$ with the minimum $c_{10}=0.61$ converges to the lowest.}
\label{fig-3}
\end{figure}

\begin{proposition}[On maximizing a coordinate of the equilibrium] \label{thm:max_coord}
For maximizing the coordinate $p_i^\infty$ for some fixed index $i$, it is necessary and sufficient that:
\begin{description}
    \item[(a)]\label{desc-max:item-a} The parameter $c_i$ is maximal among all parameters $c_k$: $c_i = \max\limits_{k \in \{1,\ldots,n\}} \{c_k\}.$
    \item[(b)]\label{desc-max:item-b} The parameters $c_m$ for $m \ne i$ are as small as possible, provided that $c_m > \Lambda$.
\end{description}
\end{proposition}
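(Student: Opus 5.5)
We work under the standing assumption that condition~\eqref{eneq_all} holds, so that by the preceding Remark
$$p_i^\infty = 1-\frac{\Lambda}{c_i}, \qquad \Lambda=\frac{n-1}{\sum_{m=1}^n \frac{1}{c_m}}.$$
The plan is to treat $p_i^\infty$ as an explicit function of the parameter vector $(c_1,\ldots,c_n)$ and study its monotonicity in each variable. Writing $S=\sum_{m=1}^n 1/c_m$, we get
$$p_i^\infty = 1-\frac{n-1}{c_i S} = 1-\frac{n-1}{1+c_i\sum_{m\ne i}1/c_m}.$$
This form separates the dependence on $c_i$ from the dependence on the other parameters. It shows that $p_i^\infty$ is a strictly increasing function of the single quantity $c_i\sum_{m\neq i}1/c_m$.

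\textbf{Step 1: monotonicity in each variable.} From the form above, $p_i^\infty$ is strictly increasing in $c_i$, since the factor $\sum_{m\ne i}1/c_m>0$ is fixed. It is strictly decreasing in each $c_m$ with $m\neq i$, because $1/c_m$ decreases as $c_m$ grows. We can confirm this by computing partial derivatives:
$$\frac{\partial p_i^\infty}{\partial c_i} = \frac{(n-1)\sum_{m\ne i}1/c_m}{\bigl(1+c_i\sum_{m\ne i}1/c_m\bigr)^2}>0,$$
and $\partial p_i^\infty/\partial c_m<0$ for $m\neq i$. This directly yields item~(b): making the competitors' parameters smaller raises $p_i^\infty$.

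\textbf{Step 2: the role of the admissibility constraint.} Shrinking $c_m$ is allowed only while the interior regime persists, that is, while $c_m>\Lambda$, equivalently while condition~\eqref{eneq_all} still holds for index $m$. This is exactly the proviso in (b). Beyond that point, Theorem~\ref{thpop3} sets $p_m^\infty=0$ and $M^*$ shrinks, which changes the formula. So the optimization must be read within the feasible region $\{c: c_k>\Lambda \ \forall k\}$, together with the model's box constraint $0<c_k\le 1$.

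\textbf{Step 3: item (a).} The key identity is
$$p_i^\infty-p_j^\infty=\Lambda\Bigl(\frac{1}{c_j}-\frac{1}{c_i}\Bigr).$$
Hence $p_i^\infty\ge p_j^\infty$ if and only if $c_i\ge c_j$. Consequently, $p_i^\infty$ is the largest coordinate of ${\bf p}^\infty$ exactly when $c_i=\max_k c_k$, which is the necessity and sufficiency claimed in (a) when "maximizing" is read as "making $p_i^\infty$ the maximal coordinate." Combined with Step 1, pushing $c_i$ up to its upper bound (maximal among all $c_k$) is also optimal for the absolute value of $p_i^\infty$.

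\textbf{Main obstacle.} The statement is imprecise: (b) describes an infimum that is not attained, since the constraint $c_m>\Lambda$ is strict and $\Lambda$ itself depends on $c_m$. I will handle this as follows.
\begin{itemize}
\item Rewrite the constraint $c_m>\Lambda$ in the explicit form of condition~\eqref{eneq_all}.
\item Argue that the supremum of $p_i^\infty$ is approached as each $c_m$, $m\ne i$, decreases toward its critical value $c_m^{\text{crit}}$ from Theorem~\ref{thm:bifurcation}.
\item Note that moving one $c_m$ changes the thresholds of the others, so the coupled constraints must be checked jointly, which is the delicate point.
\end{itemize}
The joint check works because lowering $c_m$ decreases $\Lambda$, since $\Lambda$ is increasing in every $c_k$. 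A lower $\Lambda$ only loosens the constraints $c_k>\Lambda$ on the remaining indices, so decreasing the competitors' parameters one at a time never violates feasibility for the others.
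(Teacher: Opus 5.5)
Your Step~1 is exactly the paper's proof. The paper also writes $\Lambda/c_i=(n-1)/\bigl(1+c_i\sum_{m\ne i}1/c_m\bigr)$ and reads (a) and (b) off from monotonicity. Your observation that lowering a competitor lowers $\Lambda$, and so only loosens $c_k>\Lambda$ for $k\ne m$, is a correct addition the paper omits.

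The genuine difference is item~(a). The paper just equates ``$c_i$ as large as possible'' with $c_i=\max_k c_k$, which monotonicity does not justify. Your identity $p_i^\infty-p_j^\infty=\Lambda\bigl(1/c_j-1/c_i\bigr)$ yields an honest equivalence, at the price of reading ``maximizing'' as ``making $p_i^\infty$ the largest coordinate.'' That is the natural reading under which (a) is literally necessary and sufficient, because in the value sense there is no maximizer. With $u_m=1/c_m$, the constraints $c_m>\Lambda$ read $u_m<\frac{1}{n-1}\sum_k u_k$; for $n=3$, $i=1$ this is the unbounded strip $|u_2-u_3|<u_1$. So $\sup p_i^\infty=1$ is not attained, and it is approached only as all competitors tend to $0$ together, with their thresholds $c_m^{\text{crit}}$ shrinking alongside. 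This makes your ``main obstacle'' precise.

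Two corrections. First, $c_m>\Lambda$ is equivalent to $c_m>(n-2)/\sum_{j\ne m}1/c_j$ (the threshold of Theorem~\ref{thm:bifurcation}), not to \eqref{eneq_all}. With numerator $n-1$, \eqref{eneq_all} can never hold for all $k$: for the index with the smallest $c_k$, its right-hand side is the harmonic mean of the other $c_j$, hence at least $c_k$. Your standing hypothesis should therefore be $c_k>\Lambda$ for all $k$, which is all your argument actually uses.

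Second, your feasibility check covers only lowering competitors. Raising $c_i$ raises $\Lambda$ and can expel a competitor: ${\bf c}=(0.1,0.1,0.9)$ is interior, but raising $c_1$ to $0.8$ lands in the boundary regime of Fig.~\ref{ris:3pop_b}. So ``pushing $c_i$ up is optimal'' needs an extra remark: $p_i^\infty$ remains continuous and increasing across such transitions, since $\Lambda(M)=\Lambda(M\setminus\{m\})$ exactly when $c_m=\Lambda(M)$. Note also that reaching the upper bound $c_i=1$ is not the same as $c_i=\max_k c_k$.
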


\begin{proof}
To maximize the  coordinate $p_i^\infty = 1 - \frac{\Lambda}{c_i}$, it is necessary to minimize the positive fraction $\frac{\Lambda}{c_i}$. This is equivalent to maximizing the denominator $c_i \sum\limits_{m=1}^n \frac{1}{c_m}$. Let's expand this product:
$$c_i \sum_{m=1}^n \frac{1}{c_m} = c_i \left( \frac{1}{c_i} + \sum\limits_{m \ne i} \frac{1}{c_m} \right) = 1 + c_i \sum\limits_{m \ne i} \frac{1}{c_m}$$
The expression $1 + c_i \sum\limits_{m \ne i} \frac{1}{c_m}$ reaches its maximum value when $c_i$ is maximal (condition (a)), and the sum $\sum\limits_{m \ne i} \frac{1}{c_m}$ is maximal. The latter condition holds when the values $c_m$ (for $m \ne i$) are as small as possible, provided that $c_m > \Lambda$ (condition (b)).
Thus, conditions (a) and (b) are necessary and sufficient for maximizing $p_i^\infty$.
\end{proof}

\begin{proposition}[On minimizing a coordinate of the equilibrium] \label{thm:min_coord}
To minimize the coordinate $p_j^\infty$ (given $p_j^\infty > 0$) for some fixed index $j$, it is necessary and sufficient that:
\begin{description}
    \item[(a)] The parameter $c_j$ is minimal among all parameters $c_k$ (satisfying $c_j > \Lambda$).
    \item[(b)] The parameters $c_m$ for $m \ne j$ are as large as possible.
\end{description}
\end{proposition}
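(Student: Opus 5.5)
The plan mirrors the proof of Proposition~\ref{thm:max_coord}, with the direction of monotonicity reversed. Work in the regime fixed above, where condition~\eqref{eneq_all} holds for all $k$, so that $\gamma(M^*)=n$. Then
$$p_j^\infty = 1-\frac{\Lambda}{c_j}, \qquad \frac{\Lambda}{c_j} = \frac{n-1}{c_j\sum_{m=1}^n \frac{1}{c_m}} = \frac{n-1}{1 + c_j\sum_{m\ne j}\frac{1}{c_m}}.$$
Minimizing $p_j^\infty$ is therefore equivalent to maximizing $\Lambda/c_j$. That in turn is equivalent to minimizing the single quantity
$$D_j := 1 + c_j\sum_{m\ne j}\frac{1}{c_m}.$$

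Next, read off the monotonicity of $D_j$ in each parameter separately. $D_j$ is strictly increasing in $c_j$, since the sum $\sum_{m\ne j}1/c_m$ is positive. It is strictly decreasing in each $c_m$ with $m\ne j$, since $1/c_m$ decreases. So $D_j$ decreases, and $p_j^\infty$ decreases, when $c_j$ is lowered or when any other $c_m$ is raised. This gives sufficiency of (a) and (b) at once.

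For necessity, I would argue by perturbation. Suppose $c_j$ is not minimal, i.e.\ some $c_l<c_j$. Then lowering $c_j$ slightly, while staying above $\Lambda$, strictly decreases $D_j$. The same happens if some $c_m$, $m\neq j$, can still be increased within the admissible range. In either case $p_j^\infty$ was not minimal.

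Two points need care.

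\emph{Constraint (a).} The requirement $c_j>\Lambda$ (equivalently $p_j^\infty>0$) is exactly $D_j>n-1$. So lowering $c_j$ drives $p_j^\infty$ down toward $0$ but never reaches it inside the regime. The "minimum" must therefore be understood as an infimum approached as $c_j\downarrow\Lambda$, and I would phrase (a) accordingly.

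\emph{Admissible range for (b).} Recall $0<c_i\le 1$. Raising $c_m$ also raises $\Lambda$, so I would check that $c_j>\Lambda$ is preserved, or else note that the extremal configuration sits on that boundary.

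I expect the main obstacle to be this interplay between the parameters. When the $c_m$ are increased, $\Lambda$ moves, and the positivity condition for $c_j$ couples to all the other parameters. Working with $D_j$ directly avoids tracking $\Lambda$, so I would keep the entire argument in terms of $D_j$ and the constraint $D_j>n-1$.
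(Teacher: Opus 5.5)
Your reduction is the paper's: both write $p_j^\infty=1-\frac{n-1}{D_j}$ with $D_j=1+c_j\sum_{m\ne j}\frac{1}{c_m}$ and read (a), (b) off the coordinatewise monotonicity of $D_j$. At that point the paper simply declares the conditions necessary and sufficient. The gap is in your necessity step, and the paper's proof has the same gap.

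\emph{The perturbation argument.} Your argument for (a) never uses the hypothesis $c_l<c_j$. Lowering $c_j$ slightly decreases $D_j$ and $\Lambda$, and keeps you inside the \emph{open} region $D_j>n-1$, whether or not $c_j$ is already the smallest parameter. So the argument really shows that no admissible parameter vector minimizes $p_j^\infty$ at all. ``Necessity'' is then vacuous. ``Sufficiency'' fails, because (a) and (b) do not determine $p_j^\infty$: with $c_m=1$ for $m\ne j$, every $c_j\in\left(\frac{n-2}{n-1},1\right)$ satisfies both, and $p_j^\infty=1-\frac{n-1}{1+(n-1)c_j}$ sweeps all of $\left(0,\frac1n\right)$. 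The underlying confusion is between ``lowering $c_j$ decreases $p_j^\infty$'' (monotonicity) and ``$c_j=\min_k c_k$'' (a relational condition). The first does not yield the second.

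\emph{The infimum reading.} Passing to the infimum, as you suggest, does not make (a) and (b) necessary either. Since $p_j^\infty$ depends on the parameters only through $D_j$, $p_j^\infty\to0$ exactly when $D_j\downarrow n-1$. With all $c_m$, $m\ne j$, equal to a common $c\in(0,1)$, this happens as $c_j\downarrow\frac{n-2}{n-1}c$, so (b) is not needed. For (a), take $n=3$, $0<\varepsilon\le\frac12$ and ${\bf c}=\left(\frac{\varepsilon}{1-\varepsilon},\frac{\varepsilon}{1-\varepsilon/2},\frac23\right)$. Then $\Lambda=\varepsilon$ and ${\bf p}^\infty=\left(\varepsilon,\frac{\varepsilon}{2},1-\frac{3\varepsilon}{2}\right)$, so $p_1^\infty\to0$ even though $c_2<c_1$ throughout.

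\emph{What is actually provable.} The relational content of (a) comes from a fact you never use: $\Lambda$ is common to all coordinates. Hence $p_j^\infty\le p_k^\infty\iff c_j\le c_k$. Also $c_k>\Lambda=c_j(1-p_j^\infty)$ gives $c_j<\min_k c_k/(1-p_j^\infty)$, so a small $p_j^\infty$ forces $c_j$ to be only \emph{nearly} minimal. What can honestly be proved is this ordering fact, the monotonicity of $D_j$, and that $\inf p_j^\infty=0$ is not attained and is approached exactly as $D_j\downarrow n-1$.

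\emph{The regime.} Condition \eqref{eneq_all} as printed, with numerator $n-1$, is equivalent to $p_k^\infty>\frac1n$, so it cannot hold for all $k$ at once. The regime you actually use, $c_k>\Lambda$ (i.e.\ $D_k>n-1$) for all $k$, is the correct one.
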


\begin{proof}
To minimize the  coordinate $p_j^\infty = 1 - \frac{\Lambda}{c_j}$ (while keeping $p_j^\infty > 0$), it is necessary for the fraction $\frac{\Lambda}{c_j}$ to be maximal, but less than 1. This is equivalent to minimizing the denominator $c_j \sum\limits_{m=1}^n \frac{1}{c_m}$, provided it remains strictly greater than $n-1$.
The expression $1 + c_j \sum\limits_{m \ne j} \frac{1}{c_m}$ will be minimal (given $p_j^\infty > 0$) when $c_j$ is minimal (condition (a)), and the sum $\sum\limits_{m \ne j} \frac{1}{c_m}$ is minimal. The latter condition holds when the values $c_m$ (for $m \ne j$) are as large as possible (condition (b)).
Thus, conditions (a) and (b) are necessary and sufficient for minimizing $p_j^\infty$ (given $p_j^\infty > 0$).
\end{proof}

\begin{proposition}[On simultaneously optimizing the coordinates of the equilibrium]\label{thm:simultaneous_opt}
For the simultaneous maximization of $p_i^\infty$ for some fixed index $i$ and minimization of $p_j^\infty$ for some fixed index $j$, $j \neq i$, it is necessary and sufficient that the following conditions hold:
\begin{description}
    \item[(a)] The parameter $c_i$ is maximal among all parameters $c_k$: $c_i = \max\limits_{k \in \{1,\ldots,n\}} \{c_k\}$.
    \item[(b)] The parameter $c_j$ is minimal among all parameters $c_k$: $c_j = \min\limits_{k \in \{1,\ldots,n\}} \{c_k\}$.
    \item[(c)] The parameters $c_k$ for $k \notin \{i, j\}$ are as small as possible, i.e., close to $c_j$, provided that $c_k > \Lambda$.
\end{description}
\end{proposition}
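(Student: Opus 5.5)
The plan is to reduce the statement to the monotonicity facts already used in Propositions~\ref{thm:max_coord} and~\ref{thm:min_coord}, applied to the same interior equilibrium. Under the standing assumption that condition~\eqref{eneq_all} holds, we have $p_k^\infty = 1-\Lambda/c_k$ with $\Lambda=(n-1)/S$, where $S=\sum_{m}1/c_m$. I will write
$$p_i^\infty = 1-\frac{n-1}{c_i S},\qquad p_j^\infty = 1-\frac{n-1}{c_j S},$$
so that maximizing $p_i^\infty$ means maximizing $c_iS = 1 + c_i\sum_{m\ne i}1/c_m$, and minimizing $p_j^\infty$ means minimizing $c_jS = 1 + c_j\sum_{m\ne j}1/c_m$. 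This must be done under the feasibility constraint $c_jS>n-1$.

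The key step is to split $S = 1/c_i + 1/c_j + T$, where $T=\sum_{k\notin\{i,j\}}1/c_k$, and examine each objective's dependence on the three groups of variables.

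\emph{Dependence on $c_i$.} The quantity $c_iS = 1 + c_i(1/c_j+T)$ increases in $c_i$. Also $c_jS = 1 + c_j/c_i + c_jT$ decreases in $c_i$. So raising $c_i$ serves both goals, which gives (a): $c_i$ is the largest parameter.

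\emph{Dependence on $c_j$.} The quantity $c_jS = 1 + c_j(1/c_i+T)$ increases in $c_j$, so minimizing $p_j^\infty$ pushes $c_j$ down. Meanwhile $c_iS$ contains $c_i/c_j$, which also grows as $c_j$ decreases. Hence both goals favour a small $c_j$, giving (b): $c_j$ is the minimum, subject to $c_j>\Lambda$.

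\emph{Dependence on $c_k$, $k\notin\{i,j\}$.} Both objectives depend on these only through $T$. Maximizing $p_i^\infty$ wants $T$ large, i.e.\ $c_k$ small (Proposition~\ref{thm:max_coord}(b)). Minimizing $p_j^\infty$ wants $T$ small, i.e.\ $c_k$ large (Proposition~\ref{thm:min_coord}(b)).

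This conflict on $T$ is the main obstacle. I would resolve it by ordering the objectives as the statement implicitly does. The maximization of $p_i^\infty$ is primary, and the minimization of $p_j^\infty$ is then achieved through the choice $c_j=\min_k c_k$. The ordering constraint $c_k\ge c_j$ for $k\notin\{i,j\}$ then forces the smallest admissible $c_k$ to be those close to $c_j$. This gives (c), and the feasibility constraint $c_k>\Lambda$ is retained throughout so that the interior formula from Theorem~\ref{thpop3} stays valid.

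Necessity would follow by contraposition, coordinate by coordinate. If any of (a)--(c) fails, the monotonicity computed above yields a perturbation of the relevant parameter that strictly improves one objective without worsening the other. Sufficiency is the combination of the three monotonicity observations. I expect the delicate point to be stating precisely in what sense ``simultaneous'' optimization is meant, since it is a Pareto or lexicographic notion rather than a joint optimum. I would make this explicit before carrying out the argument.
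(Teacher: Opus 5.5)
Your handling of (a) and (b) matches the paper: both read them off from the monotonicity in Propositions~\ref{thm:max_coord} and~\ref{thm:min_coord}. The gap is in (c), and your own computation exposes it.

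For fixed $c_i,c_j$, the remaining parameters enter only through $\Lambda$. Both $p_i^\infty=1-\Lambda/c_i$ and $p_j^\infty=1-\Lambda/c_j$ are decreasing in $\Lambda$. So any perturbation of a $c_k$, $k\notin\{i,j\}$, that changes $\Lambda$ improves one objective and strictly worsens the other. The perturbation your contrapositive step needs, one that ``strictly improves one objective without worsening the other'', therefore does not exist for (c). Under a Pareto reading (c) is not necessary at all. The lexicographic reading recovers necessity only by reducing (c) to a restatement of Proposition~\ref{thm:max_coord}(b). You then never show that $p_j^\infty$ is still minimized. The claim that this ``is achieved through the choice $c_j=\min_k c_k$'' is unsupported, because minimality of $c_j$ alone does not make $p_j^\infty$ small.

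The missing idea, which the paper's proof uses, is that the conflict disappears in the regime $c_i\gg c_j$. With $c_k\approx c_j$ one gets $\Lambda\approx (n-1)/(1/c_i+(n-1)/c_j)$, so $\Lambda/c_j\to 1^-$. Hence $p_j^\infty\to0^+$, its infimum, while $\Lambda$ is as small as (b) permits.

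You can make this rigorous directly. First, $1-p_i^\infty=\frac{c_j}{c_i}(1-p_j^\infty)$. Second, under (b),
$c_jS=1+\frac{c_j}{c_i}+\sum_{k\notin\{i,j\}}\frac{c_j}{c_k}\le n-1+\frac{c_j}{c_i}$.
The admissibility condition $p_j^\infty>0$, i.e.\ $c_jS>n-1$, then gives three facts:
\begin{itemize}
\item $\sum_{k\notin\{i,j\}}(1-c_j/c_k)<c_j/c_i$;
\item $p_j^\infty\le \frac{c_j/c_i}{n-1+c_j/c_i}$;
\item $p_i^\infty\ge 1-c_j/c_i$.
\end{itemize}
One checks that $(p_i^\infty,p_j^\infty)\to(1,0)$ if and only if $c_j/c_i\to0$ and $c_k/c_j\to1$ for all $k\notin\{i,j\}$. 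In this regime (c) is forced by admissibility rather than imposed by a priority rule.

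A genuine joint optimum does not exist, since neither $\sup p_i^\infty=1$ nor $\inf p_j^\infty=0$ is attained. This asymptotic statement is therefore the precise sense of ``simultaneous'' optimization you should fix before running the argument.
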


\begin{proof}
From Theorem \ref{thm:max_coord} it follows that to maximize $p_i^\infty$, $c_i$ must be maximal, and $c_m$ ($m \ne i$) must be minimal (provided $c_m > \Lambda$).
From Theorem \ref{thm:min_coord} it follows that to minimize $p_j^\infty$, $c_j$ must be minimal (provided $c_j > \Lambda$), and $c_m$ ($m \ne j$) must be maximal.

For these conditions to hold simultaneously, conditions (a) and (b) of this theorem are necessary.
Let's consider the influence of parameters $c_k$ (where $k \notin \{i, j\}$) on the value of $\Lambda = \frac{n-1}{\sum\limits_{m=1}^n \frac{1}{c_m}}$.
To maximize $p_i^\infty = 1 - \frac{\Lambda}{c_i}$, it is necessary to minimize $\Lambda$. This is achieved when the sum $\sum\limits_{m=1}^n \frac{1}{c_m}$ is maximal.
To minimize $p_j^\infty = 1 - \frac{\Lambda}{c_j}$ (i.e., $p_j^\infty \to 0^+$), it is necessary that $\frac{\Lambda}{c_j} \to 1^-$, which means $\Lambda \to c_j^+$.

Let
$c_i = \max\limits_{k \in \{1, \ldots, n\}} \{c_k\}$, $c_j = \min\limits_{k \in \{1, \ldots, n\}} \{c_k\}.$
To minimize $\Lambda$ (which is beneficial for $p_i^\infty$), the sum $\sum_{m=1}^n \frac{1}{c_m}$ must be large. Since $c_i$ is large and $c_j$ is small, $1/c_i$ is small and $1/c_j$ is large. To maximize the sum, the terms $1/c_k$ for $k \notin \{i, j\}$ must also be large. This is achieved if $c_k$ for $k \notin \{i, j\}$ are small, i.e., close to $c_j$ (condition (c)).

If $c_k \approx c_j$ for $k \notin \{i, j\}$, then $\sum_{m=1}^n \frac{1}{c_m} \approx \frac{1}{c_i} + \frac{n-1}{c_j}$ (assuming all $c_k, k \ne i$, are approximately equal to $c_j$).
Then $\Lambda \approx \frac{n-1}{\frac{1}{c_i} + \frac{n-1}{c_j}}$.
If $c_i \gg c_j$, then $1/c_i$ is small compared to $(n-1)/c_j$, and then $\Lambda \approx \frac{n-1}{(n-1)/c_j} = c_j$.
This value $\Lambda \approx c_j$ satisfies the condition $p_j^\infty \to 0^+$ (since $\Lambda$ is close to $c_j$ from below, so that $c_j > \Lambda$). Simultaneously, such a minimal value of $\Lambda$ (close to the smallest $c_k$, i.e., $c_j$) together with a maximal $c_i$ leads to the maximization of $p_i^\infty$.

Thus, conditions (a), (b), and (c) are necessary and sufficient for the simultaneous optimization of $p_i^\infty$ and $p_j^\infty$ in the sense described.
\end{proof}
\section{Future outlook: from fixed points to complex dynamics via delayed feedback}

A significant extension of the present model involves transitioning from static favorability parameters $c_i$ to dynamic coefficients that depend on the system's history. This modification reflects real-world scenarios where prolonged dominance of one component can trigger negative feedback mechanisms, such as resource depletion, regulatory intervention, or social fatigue.

A particularly well-studied and powerful mechanism for this is delayed feedback, famously introduced by Pyragas as a method for controlling chaos \cite{Pyragas1992, SS2008}. While originally conceived for control, delayed feedback is also a potent source of complex dynamics. Introducing a delay $\tau$ transforms the system into an infinite-dimensional one, known to undergo bifurcations leading to stable limit cycles and chaos \cite{Er2009, Wei2024, Burylko2023, Burylko2022, Panchuk2021, Avrutin2023}. Such complex mean--field dynamics provide a clear theoretical pathway from our fixed-point dynamics to the rich behaviors observed in macroscopic real--world systems, including biophysically grounded neural populations \cite{Majhi2019, Cakan2020} and complex quantum states \cite{Trigueros2024}.

This dynamic can be modeled by introducing a delayed dependence on the component's own share. The favorability coefficient $c_i$ at time $t$ is then defined as:
\begin{equation*} \label{eq:dynamic_c_revised}
c_i(t) = c_{i, \text{max}} - \beta \cdot p_i(t-\tau)
\end{equation*}
where $c_{i, \text{max}} = \max\limits_i \{c_i\}_{i=1}^n$ is the baseline favorability, $\tau$ is the time lag, and the parameter~$\beta$ represents the strength of the negative feedback.

\begin{figure}[h!]
\begin{minipage}{0.9\linewidth}
\center{\includegraphics[width=1\linewidth]{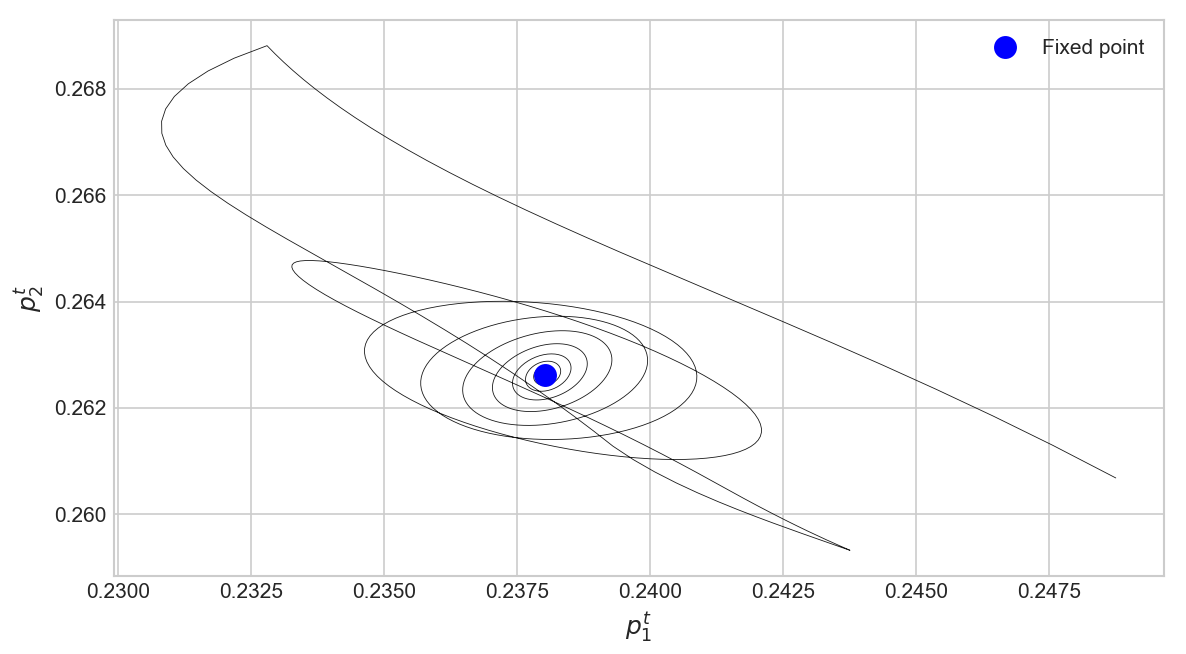}} a) Stable fixed point, $\beta=1.2$ \\
\end{minipage}
\vfill
\begin{minipage}{0.47\linewidth}
\center{\includegraphics[width=1.1\linewidth]{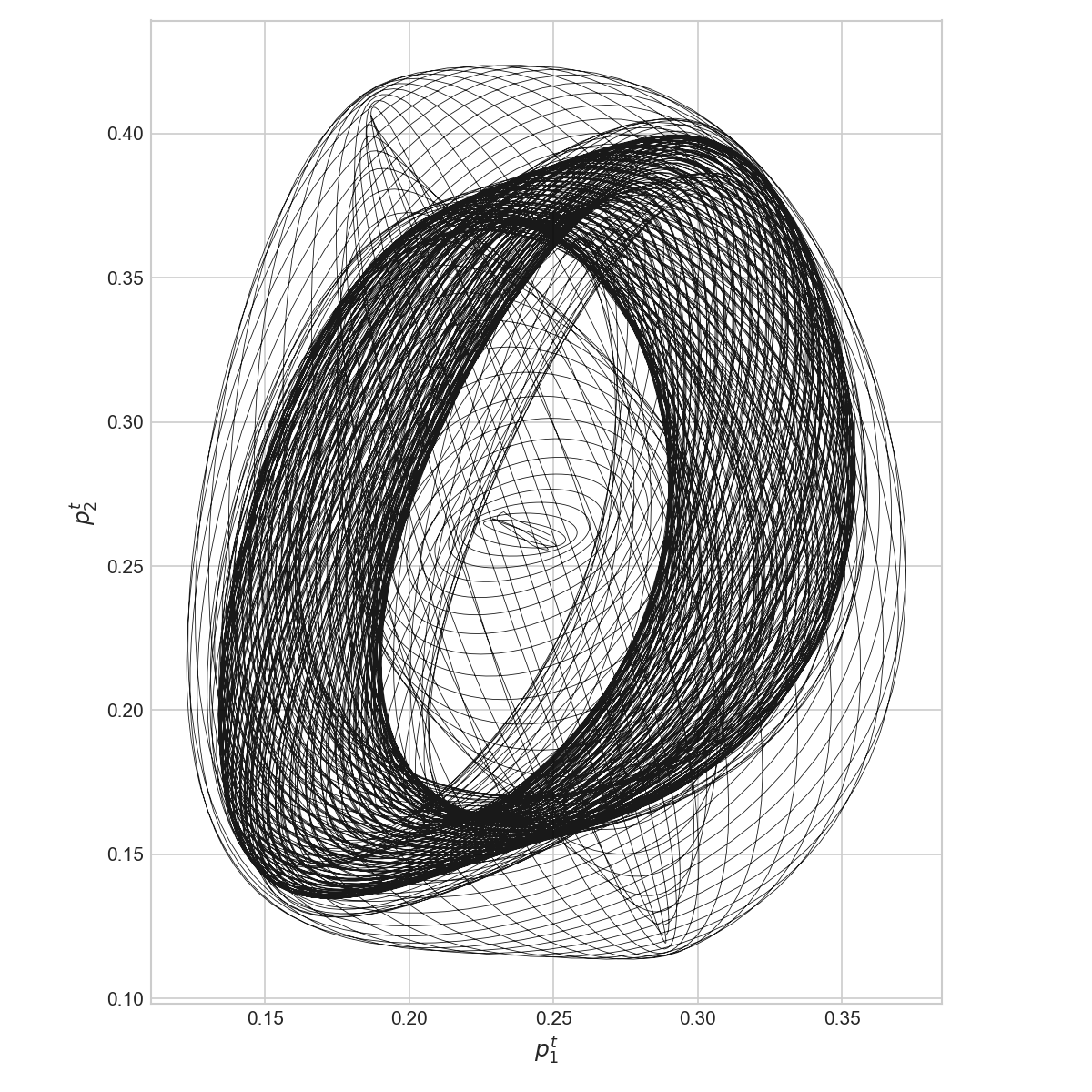}} b) Quasi-periodic orbit, $\beta=1.5$ \\
\end{minipage}
\hfill
\begin{minipage}{0.47\linewidth}
\center{\includegraphics[width=1.12\linewidth]{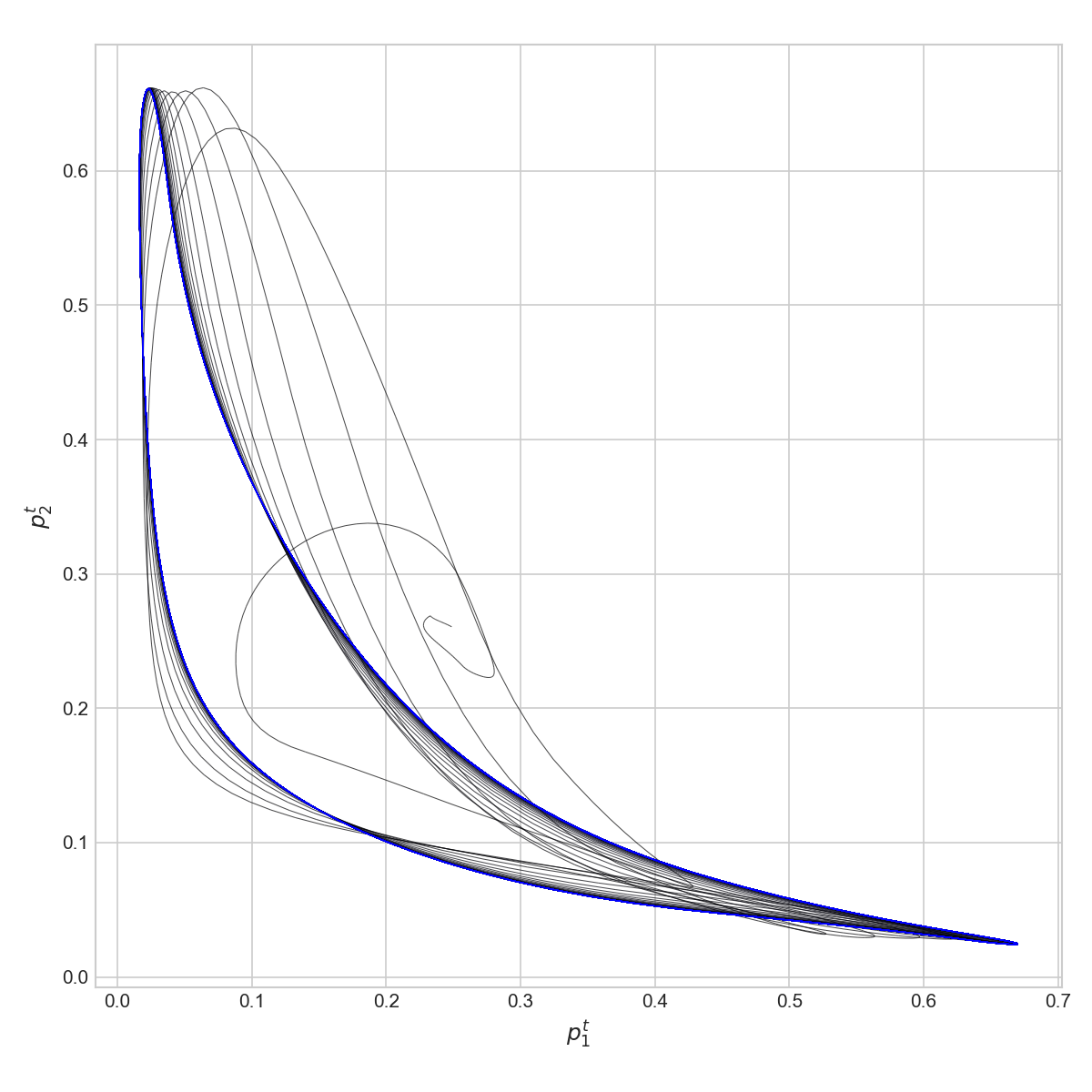}} c) Periodic orbit, $\beta=3$ \\
\end{minipage}
\caption{Phase portrait (attractor in the $p_1^t$, $p_2^t$ space), where ${\bf p}^0 = (0.25, 0.26, 0.24, 0.25)$, ${\bf c}=(0.9, 0.85, 0.95, 0.8)$, $\tau=30$.}
\label{fig:cycle_chaos_plots}
\end{figure}

The introduction of this feedback mechanism reveals a rich spectrum of dynamical behaviors far beyond a single fixed point. Numerical simulations demonstrate that the feedback strength $\beta$ acts as a critical bifurcation parameter, driving the system's complexity and governing the transition from fixed-point dynamics to complex aperiodic behavior. The bifurcation diagram (Fig.~\ref{fig:cycle_chaos_diag}) is constructed by plotting the asymptotic values of the trajectory's first coordinate, $p_1^t$, after discarding initial transient dynamics.

For moderate values of $\beta$, the system exhibits periodic and quasi-periodic oscillations (Fig.~\ref{fig:cycle_chaos_plots}b, c). In terms of the resource competition model, this mathematical regime represents a dynamic equilibrium where no single component achieves permanent dominance, leading instead to a perpetual and predictable periodic or quasi-periodic redistribution of resources.

As $\beta$ increases past approximately 1.5, we phenomenologically observe that the system loses its stable fixed point and transitions into a broad regime of highly complex, aperiodic, and unpredictable oscillations. The trajectory remains confined to a specific region of the phase space, which we provisionally describe as a complex aperiodic (chaotic-like) attractor, suggesting significant sensitivity to initial conditions. These dynamics remain highly complex throughout a wide parametric window $\beta \in [1.5, 4.1]$, where simulations reveal a dense bifurcation structure interspersed with sudden topological changes.

\begin{figure}[h!]
\center{\includegraphics[scale=0.45]{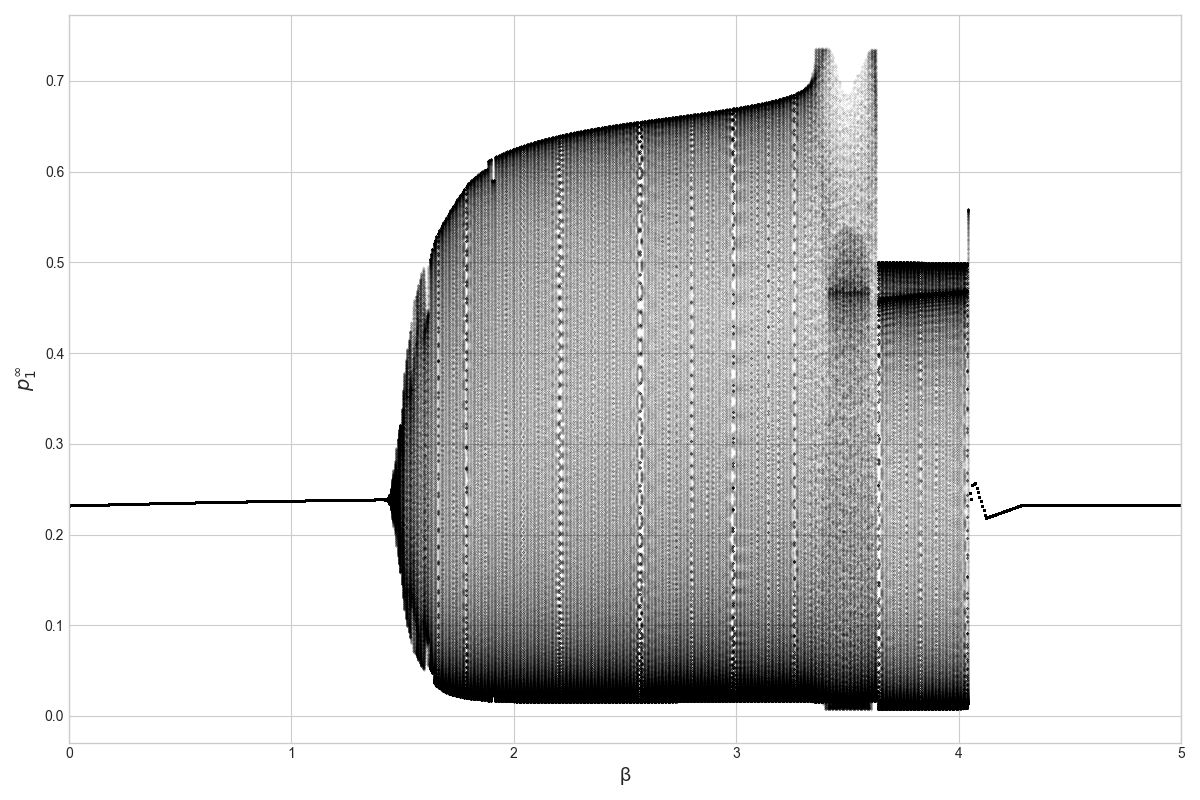}}
\caption{Bifurcation diagram of the extended system with delayed feedback for ${\bf c}=(0.9, 0.85, 0.95, 0.8)$
and $\tau=30$. Asymptotic values of $p_1^t$ (after discarding $10^4$ transient steps) are plotted as a function of $\beta$,
revealing a stable fixed point for $\beta \lessapprox 1.5$, a broad complex regime for $\beta \in [1.5, 4.1]$, and a return
to a stable fixed point following a global crisis near $\beta \approx 4.1$.}
\label{fig:cycle_chaos_diag}
\end{figure}

While a rigorous topological classification of these bifurcations falls outside the scope of this paper, these numerical results clearly confirm that delayed attractive interaction can drive the considered system through highly unpredictable oscillatory regimes and abrupt structural shifts, providing a rich ground for future analytical studies.

This extended framework provides a powerful tool for modeling complex real-world systems that exhibit not only stable equilibria or regular cycles but also irregular and unpredictable fluctuations. A full bifurcation analysis, mapping the precise route to complex aperiodic dynamics as a function of the system parameters $\beta$ and $\tau$, remains a key objective for future work.

\section{Conclusion}
This study provides a comprehensive analysis of a dynamical system that models the redistribution of a conserved
quantity (e.g., resource, capital, or influence) in a discrete state space under the influence of attractive interaction.
The core mathematical findings demonstrate that trajectories of this system always converge to a stable fixed point.
The topological location of this equilibrium~--- whether it lies in the interior of the simplex (all $p_i^\infty > 0$)
or on its boundary (some $p_i^\infty = 0$)~--- is analytically determined by the interaction constants~$c_i$. In terms
of the applied model, this dictates whether the system reaches a state of coexistence or heavily aggregated resource concentration.

When all parameters~$c_i$ are equal, all trajectories with strictly positive initial coordinates converge to a unique
attractor located exactly at the barycenter of the simplex ($p_i^\infty = 1/n$ for all $i$). This result admits
a natural interpretation: it describes a theoretical pathway to achieving perfect equality in resource distribution, which is
important for the analysis of idealized social or economic systems.

The research is further extended by analyzing the influence of heterogeneous constants~$c_i$ on the equilibrium.
The explicit analytical form of the fixed point reveals a strict functional dependence: higher values of a parameter~$c_i$ strictly
maximize the corresponding coordinate $p_i^\infty$. Furthermore, the study establishes exact bifurcation thresholds under which a
specific coordinate asymptotically vanishes ($p_i^\infty = 0$). In the context of resource competition, these mathematical phenomena
represent, respectively, the systematic concentration of the resource and the complete competitive exclusion (displacement) of a component
from the system.

The proposed analytical criteria translate these mathematical results into actionable conditions: by tuning the
parameters~$c_i$, one can deliberately steer the system toward a desired equilibrium configuration, whether maximizing a specific
coordinate or driving others to zero. This offers a concrete theoretical tools
for managing systems exhibiting attractive interactions.

The developed framework~--- comprising explicit equilibrium formulas, bifurcation thresholds, and stability
criteria~--- provides a self--contained mathematical foundation for the analysis of discrete--time attractive interaction models, with
direct applications to resource competition, opinion dynamics, and wealth distribution.

\section*{Acknowledgments}
This work was partially supported by the Simons Foundation grant SFI-PD-Ukraine-00014586 (O.R.S.) and by the project “Mathematical modeling of complex systems and security-related processes” (No. 0125U000299).

\label{sec:code}
\section*{Code Availability}\label{sec:code}
The Python code implementing the algorithms presented in this paper is available in a public GitHub repository: https://github.com/kseniajasko/fixed-point-analysis-conflict-model.

\end{document}